\numberwithin{equation}{section}
\newtheorem{maintheorem}{Theorem}
\newtheorem{theorem}{Theorem}[section]
\newtheorem*{theorem*}{Theorem}
\newtheorem{lemma}[theorem]{Lemma}
\newtheorem{proposition}[theorem]{Proposition}
\newtheorem{corollary}[theorem]{Corollary}
\theoremstyle{definition}{

}
\theoremstyle{remark}{

\newtheorem*{remark*}{Remark}

}
\newenvironment{enumeratei}{\begin{enumerate}[\upshape (i)]}
                           {\end{enumerate}}
\newcommand{\R}{\mathbb R}
\newcommand{\E}{\mathbf{E}}
\renewcommand{\P}{\mathbf{P}}
\DeclareMathOperator{\var}{Var}
\renewcommand{\epsilon}{\varepsilon}
\newcommand{\tS}{\tilde{S}}
\newcommand{\given}{\, \big| \,}
\newcommand{\tX}{\tilde{X}}
\newcommand{\taumag}{\tau_{{\rm mag}}}
\newcommand{\one}{\boldsymbol{1}}
\newcommand{\deq}{:=}
\newcommand{\tmix}{t_\textsc{mix}}
\newcommand{\trel}{t_\textsc{rel}}
\newcommand{\texp}{t_\mathrm{exp}}
\newcommand{\F}{\mathcal{F}}
\newcommand{\temp}{\delta}
\newcommand{\gap}{\text{\tt{gap}}}
\newcommand{\magspace}{\mathcal{X}}
\begin{document}
\title[Mixing time evolution of Glauber dynamics]{The mixing time evolution of Glauber
dynamics for the mean-field Ising model}
%\title{The mixing time evolution of Glauber
%dynamics for the Mean-field Ising Model}
%\date{\today}
\date{}

\author{Jian Ding, \thinspace Eyal Lubetzky and Yuval Peres}

%\author{Jian Ding}
\address{Jian Ding\hfill\break
Department of Statistics\\
UC Berkeley\\
Berkeley, CA 94720, USA.}
\email{jding@stat.berkeley.edu}
\urladdr{}

%\author{Eyal Lubetzky}
\address{Eyal Lubetzky\hfill\break
Microsoft Research\\
One Microsoft Way\\
Redmond, WA 98052-6399, USA.}
\email{eyal@microsoft.com}
\urladdr{}

%\author{Yuval Peres}
\address{Yuval Peres\hfill\break
Microsoft Research\\
One Microsoft Way\\
Redmond, WA 98052-6399, USA.}
%, and\hfill\break
%Department of Mathematics\\
%UC Berkeley\\
%Berkeley, CA 94720, USA.}
\email{peres@microsoft.com}
\urladdr{}
\thanks{Research of J. Ding and Y. Peres was supported in part by NSF grant DMS-0605166.}

\begin{abstract}
We consider Glauber dynamics for the Ising model on the complete graph on $n$ vertices, known as the Curie-Weiss model. It is well-known that the mixing-time in the high temperature regime ($\beta < 1$) has order $n\log n$, whereas the mixing-time in the case $\beta > 1$ is exponential in $n$. Recently, Levin, Luczak and Peres proved that for any fixed $\beta < 1$ there is cutoff at time $\frac{1}{2(1-\beta)}n\log n$ with a window of order $n$, whereas the mixing-time at the critical temperature $\beta=1$ is $\Theta(n^{3/2})$. It is natural to ask how the mixing-time transitions from $\Theta(n\log n)$ to $\Theta(n^{3/2})$ and finally to $\exp\left(\Theta(n)\right)$. That is, how does the mixing-time behave when $\beta=\beta(n)$ is allowed to tend to $1$ as $n\to\infty$.

In this work, we obtain a complete characterization of the mixing-time of
the dynamics as a function of the temperature, as it approaches its critical point $\beta_c=1$. In particular, we find a scaling window of order $1/\sqrt{n}$ around the critical temperature.
In the high temperature regime, $\beta = 1 - \temp$ for some $0 < \temp < 1$ so that $\temp^2 n \to\infty$ with $n$, the mixing-time has order $(n/\temp)\log(\temp^2 n)$, and exhibits cutoff with constant $\frac{1}{2}$ and window size $n/\temp$.
 In the critical window, $\beta = 1\pm \temp$ where $\temp^2 n$ is $O(1)$, there is no cutoff, and the mixing-time has order $n^{3/2}$.
 At low temperature,
$\beta = 1 + \temp$ for $\temp > 0$ with $\temp^2 n \to\infty$
and $\delta=o(1)$,
there is no cutoff, and the mixing time has order
%$\frac{n}{\temp}\exp\left(\frac{n}{2}\int_{0}^{\zeta} \log
%\left(\frac{1+g(x)}{1-g(x)}\right)dx \right)$, where $g(x)=\frac{\tanh(\beta x)-x}{1-x\tanh(\beta x)}$, and
%$\zeta$ is the unique positive root of $g$. In particular, if $\delta=o(1)$ (low temperature near the scaling window), the mixing time has order
$\frac{n}{\delta}\exp\left((\frac{3}{4}+o(1))\delta^2 n\right)$.
%Furthermore, for each temperature, we determine the order of the spectral gap of the Glauber dynamics.
%Our results confirm a conjecture of the third author for the special case of the mean-field setting.
\end{abstract}

\maketitle

\section{Introduction}\label{sec:intro}

\begin{figure}
\centering \fbox{\includegraphics{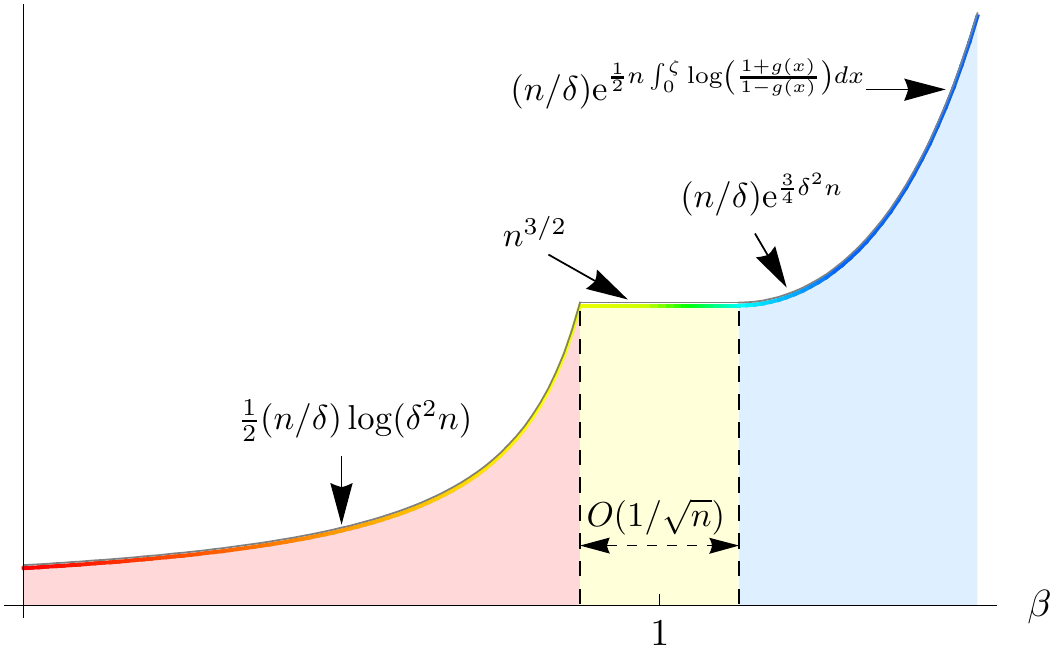}}
\caption{Illustration of the mixing time evolution as a function
of the inverse-temperature $\beta$, with a scaling window of order $1/\sqrt{n}$ around the critical point. We write $\delta = |\beta-1|$ and let $\zeta$ be the unique positive root of $g(x)\deq\frac{\tanh(\beta x)-x}{1-x\tanh(\beta x)}$.
Cutoff only occurs at high temperature.}
\label{fig:tmix-evol}
\end{figure}

The \emph{Ising Model} on a finite graph $G=(V,E)$ with parameter $\beta \geq 0$ and no external magnetic field is defined as follows. Its set of possible \emph{configurations} is $\Omega = \{1,-1\}^V$, where each configuration $\sigma\in\Omega$ assigns positive or negatives \emph{spins} to the vertices of the graph. The probability that the system is at a given configuration $\sigma$ is given by the \emph{Gibbs distribution}
$$\mu_G(\sigma) = \frac{1}{Z(\beta)} \exp\Big(\beta \sum_{xy\in E}\sigma(x)\sigma(y)\Big)~,$$
where $Z(\beta)$ (the partition function) serves as a normalizing constant. The parameter $\beta$ represents the inverse temperature: the higher $\beta$ is (the lower the temperature is), the more $\mu_G$ favors configurations where neighboring spins are aligned. At the extreme case $\beta = 0$ (infinite temperature), the spins are totally independent and $\mu_G$ is uniform over $\Omega$.

The \emph{Curie-Weiss} model corresponds to the case where the underlying geometry is the complete graph on $n$ vertices. The study of this model (see, e.g., \cite{Ellis},\cite{EN},\cite{ENR},\cite{LLP}) is motivated by the fact that its behavior approximates that of the Ising model on high-dimensional tori. It is convenient in this case to re-scale the parameter $\beta$, so that the stationary measure $\mu_n$ satisfies
\begin{equation}\label{eq-mu(sigma)}
\mu_n(\sigma) \propto \exp\Big(\frac{\beta}{n} \sum_{x < y} \sigma(x)\sigma(y)\Big)~.\end{equation}

The \emph{heat-bath Glauber dynamics} for the distribution $\mu_n$ is the following Markov Chain, denoted by $(X_t)$. Its state space is $\Omega$, and at each step, a vertex $x \in V$ is chosen uniformly at random, and its spin is updated as follows. The new spin of $x$ is randomly chosen according to $\mu_n$ conditioned on the spins of all the other vertices. It can  easily be shown that $(X_t)$ is an aperiodic irreducible chain, which is reversible with respect to the stationary distribution $\mu_n$.

We require several definitions in order to describe the mixing-time of the chain $(X_t)$. For any two
distributions $\phi,\psi$ on $\Omega$, the \emph{total-variation distance} of $\phi$ and $\psi$ is defined to be
$$\|\phi-\psi\|_\mathrm{TV} \deq \sup_{A \subset\Omega} \left|\phi(A) - \psi(A)\right| = \frac{1}{2}\sum_{\sigma\in\Omega} |\phi(\sigma)-\psi(\sigma)|~.$$
The (worst-case) total-variation distance of $(X_t)$ to stationarity at time $t$ is
$$ d_n(t) \deq \max_{\sigma \in \Omega} \| \P_\sigma(X_t \in \cdot)- \mu_n\|_\mathrm{TV}~,$$
where $\P_\sigma$ denotes the probability given that $X_0=\sigma$.
 The total-variation \emph{mixing-time} of $(X_t)$, denoted by $\tmix(\epsilon)$ for $0 < \epsilon < 1$, is defined to be
$$ \tmix(\epsilon) \deq \min\left\{t : d_n(t) \leq \epsilon \right\}~.$$
A related notion is the spectral-gap of the chain, $\gap \deq 1-\lambda$, where $\lambda$ is the largest absolute-value of all nontrivial eigenvalues of the transition kernel.

Consider an infinite family of chains $(X_t^{(n)})$, each with its corresponding worst-distance from stationarity $d_n(t)$, its mixing-times $\tmix^{(n)}$, etc. We say that $(X_t^{(n)})$ exhibits \emph{cutoff} iff for some sequence $w_n = o\big(\tmix^{(n)}(\frac{1}{4})\big)$ we have the following: for any $0 < \epsilon < 1$ there exists some $c_\epsilon > 0$, such that
\begin{equation}\label{eq-cutoff-def}\tmix^{(n)}(\epsilon) - \tmix^{(n)}(1-\epsilon) \leq c_\epsilon w_n \quad\mbox{ for all $n$}~.\end{equation}
That is, there is a sharp transition in the convergence of the given chains to equilibrium at time $(1+o(1))\tmix^{(n)}(\frac{1}{4})$. In this case, the sequence $w_n$ is called a \emph{cutoff window}, and the sequence $\tmix^{(n)}(\frac{1}{4})$ is called a \emph{cutoff point}.

It is well known that for any fixed $\beta>1$, the Glauber dynamics $(X_t)$ mixes in exponential time (cf., e.g., \cite{GWL}), whereas for any fixed $\beta < 1$ (high temperature) the mixing time has order $n\log n$ (see \cite{AH} and also \cite{BD}).
Recently, Levin, Luczak and Peres \cite{LLP} established that the mixing-time at the critical point $\beta=1$ has order $n^{3/2}$, and that for fixed $0<\beta<1$ there is cutoff at time $\frac{1}{2(1-\beta)}n\log n$ with window $n$. It is therefore natural to ask how the phase transition between these states occurs around the critical $\beta_c=1$: abrupt mixing at time $(\frac{1}{2(1-\beta)}+o(1)) n\log n$ changes to a mixing-time of $\Theta(n^{3/2})$ steps, and finally to exponentially slow mixing.

In this work, we determine this phase transition, and characterize the mixing-time of the dynamics as
a function of the parameter $\beta$, as it approaches its critical value $\beta_c=1$ both from below and from above. The scaling window around the critical temperature $\beta_c$ has order $1/\sqrt{n}$, as formulated by the following theorems, and illustrated in Figure \ref{fig:tmix-evol}.

\begin{maintheorem}[Subcritical regime]\label{thm-high-temp}
  Let $\temp =\delta(n) > 0$ be such that $\temp^2 n\to\infty$ with $n$.
  The Glauber dynamics for the mean-field Ising model with parameter $\beta=1-\temp$ exhibits cutoff at time $\frac{1}{2}(n/\temp)\log(\temp^2 n)$
  with window size $n/\temp$. In addition, the spectral gap of the dynamics in this regime is
 $(1+o(1))\temp/n$, where the $o(1)$-term tends to $0$ as $n\to\infty$.
\end{maintheorem}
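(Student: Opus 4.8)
The plan is to reduce everything to the one-dimensional \emph{magnetization chain} $S_t:=\sum_x X_t(x)$. Under the heat-bath dynamics $S_t$ is a birth-and-death chain on $\{-n,-n+2,\dots,n\}$ with one-step drift $\E[S_{t+1}-S_t\mid S_t=s]=\tanh(\beta s/n)-s/n=:h(s)$ (up to $O(1/n)$ corrections coming from removing the updated site) and per-step fluctuation $\E[(S_{t+1}-S_t)^2\mid S_t]=\Theta(1)$. Two structural facts make the reduction lossless. First, the dynamics is monotone, so the worst-case distance $d_n(t)$ is essentially attained at the all-plus start, and by the spin-flip symmetry also at all-minus. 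Second, $\delta_{\mathbf 1}$ is exchangeable and the update rule commutes with permutations of the coordinates, so $\P_{\mathbf 1}(X_t\in\cdot)$ is exchangeable for every $t$; an exchangeable law on $\{\pm1\}^n$ is a mixture of the uniform laws $\nu_s$ on the level sets $\{S=s\}$, and $\mu_n=\sum_s\pi(s)\nu_s$ is such a mixture too, whence
$$ d_n(t)=\bigl\|\P_{\mathbf 1}(S_t\in\cdot)-\pi\bigr\|_\mathrm{TV},\qquad \pi:=\mu_n\circ S^{-1}. $$
In the regime $\temp^2n\to\infty$ the stationary law $\pi$ is asymptotically Gaussian with variance $(1+o(1))\,n/\temp$: write $\pi(s)\propto\binom{n}{(n+s)/2}e^{\beta s^2/2n}$ and expand the binomial, the quartic correction being $O(1/(n\temp^2))=o(1)$. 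So the whole problem is a one-dimensional chain started at $s=n$.

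\textbf{Mixing time.} With $m=s/n$ one has $h(s)=-\temp m-\tfrac13 m^3+O(m^5+\temp m^3)$, so the trajectory is governed by $\dot m=\tanh(\beta m)-m$. From $m=1$ the cubic term dominates while $m\gg\sqrt\temp$ and costs only $O(n/\temp)$ steps; thereafter the linear term $-\temp m$ rules and the descent from $m\asymp\sqrt\temp$ to the equilibrium scale $m\asymp1/\sqrt{n\temp}$ costs $\tfrac12(n/\temp)\log(\temp^2n)+O(n/\temp)$ steps (the factor $\temp^2n$ is the square of the ratio of $n\sqrt\temp$, where the linear phase begins, to $\sqrt{n/\temp}$, the equilibrium width). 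Two elementary estimates pin the actual chain to this trajectory: the map $\phi(s):=s+h(s)$ has $0\le\phi'\le1-\temp/n$ on all of $\R$, so $\var(S_{t+1})\le(1-\temp/n)^2\var(S_t)+O(1)$ and hence $\var_{\mathbf 1}(S_t)=O(n/\temp)$ for every $t$, while (as long as $\E_{\mathbf 1}[S_t]\gg\sqrt{n/\temp}$, so that $S_t\ge0$ with overwhelming probability) $\E_{\mathbf 1}[S_t]$ is squeezed by Jensen between the iteration $a_{t+1}=\phi(a_t)$ and $\tilde a_{t+1}=\phi(\tilde a_t)-\tfrac12\|h''\|_\infty\var(S_t)$. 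The lower bound at $t^-=\tfrac12(n/\temp)\log(\temp^2n)-\gamma\,(n/\temp)$ follows: then $\E_{\mathbf 1}[S_{t^-}]$ is a factor $\asymp e^{\gamma}$ above $\sqrt{n/\temp}$, while $\var(S_{t^-})=O(n/\temp)$ and $\pi$ concentrates on scale $\sqrt{n/\temp}$, so Chebyshev on both sides gives $d_n(t^-)\ge1-O(e^{-2\gamma})$. For the upper bound at $t^+=\tfrac12(n/\temp)\log(\temp^2n)+\gamma\,(n/\temp)$ the drift estimate puts $|S_t|\le K\sqrt{n/\temp}$ with probability $1-o(1)$ by time $\tfrac12(n/\temp)\log(\temp^2n)+C_K(n/\temp)$, and one then needs the law of $S_t$ to come within $\epsilon$ of $\pi$ in a further $O(n/\temp)$ steps (the crux, see below). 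Taking $\gamma=\gamma(n)\to\infty$ slowly yields cutoff at $\tfrac12(n/\temp)\log(\temp^2n)$ with window $n/\temp$.

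\textbf{Spectral gap.} The upper bound is the variational principle with $f=S$: every transition changes $S$ by $\pm2$ and the total flip-probability at magnetization $s$ is $\tfrac12-\tfrac{s}{2n}\tanh(\beta s/n)$, so $\mathcal E(S,S)=1+o(1)$ (using $\E_\pi[S^2]=O(n/\temp)$), whereas $\var_\pi(S)=(1+o(1))n/\temp$, giving $\gap\le1-\lambda_2\le\mathcal E(S,S)/\var_\pi(S)=(1+o(1))\temp/n$. For the matching lower bound use the monotone coupling $(X^+_t,X^-_t)$ (same vertex, same uniform variable) and set $D_t:=S^+_t-S^-_t\ge0$; separating the ``create a disagreement'' and ``heal a disagreement'' events one gets
$$ \E[D_{t+1}-D_t\mid\mathcal F_t]=-\tfrac1nD_t+2\bigl(p(S^+_t)-p(S^-_t)\bigr)=-\tfrac1nD_t\bigl(1-\beta\operatorname{sech}^2(\beta\xi_t/n)\bigr)\le-\tfrac\temp nD_t, $$
where $p(s)=\tfrac12(1+\tanh(\beta s/n))$ and $\xi_t$ lies between $S^-_t$ and $S^+_t$. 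Since $D_t$ equals the Hamming distance of the two ordered copies, this is path-coupling contraction at rate $\temp/n$, which forces every nontrivial eigenvalue to have modulus $\le1-\temp/n$; hence $\gap\ge\temp/n$, and combining, $\gap=(1+o(1))\temp/n$.

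\textbf{Main obstacle.} The delicate point is the bulk equilibration in the mixing-time upper bound: once $|S_t|\lesssim\sqrt{n/\temp}$, proving that the law of $S_t$ reaches $\pi$ within $\epsilon$ in only $O(n/\temp)$ further steps \emph{with explicit constants}. I would do this either by an $L^2$/relaxation bound fed by $\gap\asymp\temp/n$ together with a crude bound on $d\P_{\mathbf 1}(S_{t_0}\in\cdot)/d\pi$ at the entry time $t_0$ into the bulk, or by coupling $S_t$ with a stationary copy---first dragging both into the bulk via the common monotone coupling, then running the reflection coupling for one-dimensional birth-and-death chains, whose coalescence time in an interval of width $O(\sqrt{n/\temp})$ under a confining drift of strength $\asymp\temp/n$ is $O(n/\temp)$ with high probability. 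A companion subtlety: when $\temp$ is only $n^{-1/2+o(1)}$, the Jensen-gap error in the comparison of $\E_{\mathbf 1}[S_t]$ with the ODE must be controlled by splitting the trajectory at the curvature threshold $|S_t|\asymp n\sqrt\temp$ and exploiting the extra damping in the cubic phase, rather than using the global bound $\|h''\|_\infty=O(1/n^2)$; this is a refinement of the scheme above, not a new idea.
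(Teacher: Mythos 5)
Your reduction of the worst-case distance to the all-plus start is the decisive gap. Monotonicity does \emph{not} imply that $d_n(t)$ is (even essentially) attained at $\sigma_0=\mathbf{1}$: stochastic domination of laws does not order total-variation distances, and the exchangeability argument you give only yields the identity $\|\P_{\mathbf 1}(X_t\in\cdot)-\mu_n\|_{\mathrm{TV}}=\|\P_{\mathbf 1}(S_t\in\cdot)-\pi\|_{\mathrm{TV}}$ for the all-plus start itself (this is the paper's Lemma \ref{lem-all-plus-mag-full}); it says nothing about a general, non-exchangeable starting configuration, for which the conditional law of $X_t$ given $S_t$ is not uniform on the level set. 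The only bound monotonicity gives for free is $d_n(t)\le\P(X_t^+\ne X_t^-)$ via the grand coupling, and full coalescence of the extreme copies takes order $(n/\delta)\log n$ steps, which exceeds $\frac{1}{2}(n/\delta)\log(\delta^2 n)$ by an unbounded factor once $\delta$ is polynomially small in $n$ -- so this route cannot even recover the cutoff location, let alone a window of size $n/\delta$. This is exactly where the paper has to work hardest: after a burn-in into $\{|S|\le 1/2\}$ and coalescence of the magnetization chains, it invokes the (extended) Two Coordinate Chain theorem of Levin--Luczak--Peres (Theorem \ref{thm-two-coord-chain-subc}), tracking the sum of spins over the set of initially-positive sites ($R_t$, the events $H_1,H_2$, second-moment bounds on $M_t(B)$), to show that an arbitrary start reaches equilibrium within $O(n/\delta)$ steps \emph{after} magnetization coalescence. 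Your proposal contains no substitute for this step, so the upper bound for arbitrary initial states is not established.

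Two further points. (i) The step you yourself flag as ``the crux'' -- that once $|S_t|\lesssim\sqrt{n/\delta}$ the magnetization law gets within $\epsilon$ of $\pi$ in $O(n/\delta)$ more steps -- is left as a sketch; the paper resolves it quantitatively via the hitting-time bound $\P_{s_0}(\tau_0>t_n(\gamma))\le c/\sqrt\gamma$ (Theorem \ref{thm-tau-0}, using Corollary \ref{cor-St-hitting-0}) together with the coalescence coupling of Lemma \ref{lem-mag-coalesce-from-0}, which uses contraction (Lemma \ref{lem-contracting}) and optional stopping; your $L^2$/reflection-coupling alternatives are plausible but not carried out. (ii) In the lower bound, the global Jensen-gap control via $\|h''\|_\infty\cdot\var(S_t)$ accumulated over $\asymp(n/\delta)\log(\delta^2 n)$ steps is too crude near $\delta\approx n^{-1/2+o(1)}$ (as you note); the paper instead bounds the \emph{third} moment of $|S_t|$ and runs the weighted quantity $Z_t=|S_t|\eta^{-t}$, starting from $s_0=\sqrt\delta/3$ so as to bypass the cubic phase entirely -- a repair along these lines is needed before your squeeze argument closes. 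Your spectral-gap argument (path-coupling contraction for the lower bound, Dirichlet form with $f=S$ plus near-critical Gaussian asymptotics of $\pi$ for the upper bound) is essentially sound and close in spirit to the paper's, which instead proves gap equality of the dynamics and its magnetization chain via increasing eigenfunctions and controls the Dirichlet quotient with a fourth-moment bound, thereby avoiding any CLT input.
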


\begin{maintheorem}[Critical window]\label{thm-critical-temp}
  Let $\temp =\temp(n)$ satisfy $\temp= O(1/\sqrt{n})$. The mixing time of the Glauber dynamics for the mean-field Ising model with parameter $\beta=1\pm\temp$ has order $n^{3/2}$, and does not exhibit cutoff.
    In addition, the spectral gap of the dynamics in this regime has order $n^{-3/2}$.
\end{maintheorem}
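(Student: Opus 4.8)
\medskip

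\noindent\textbf{Proof plan.} Write $\temp\deq|\beta-1|=O(1/\sqrt n)$, and let $S_t\deq\sum_v X_t(v)$ be the magnetization. The plan is to reduce the entire statement to a one-dimensional question about the \emph{magnetization chain} --- which on the complete graph is itself a birth-and-death chain on $\{-n,-n+2,\dots,n\}$ --- and then to analyse that chain by drift and coupling arguments on the line. Its one-step drift from a state $k$ is $g(k)\deq\tanh(\beta k/n)-k/n=\pm\temp\,k/n-k^3/(3n^3)+O\!\big((|k|/n)^5+\temp\,|k|^3/n^3\big)$, with sign as in $\beta=1\pm\temp$, and its one-step variance is $2+o(1)$ uniformly for $|k|=o(n)$. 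Its stationary law $\pi$ (the $\mu_n$-law of $S$) satisfies $\log\pi(k)=\mathrm{const}\pm\tfrac{\temp k^2}{2n}-\tfrac{k^4}{12n^3}+O(\cdots)$, so after rescaling by $n^{3/4}$ the density of $S/n^{3/4}$ is proportional to $\exp\!\big(\pm\tfrac b2 x^2-\tfrac1{12}x^4\big)$ with $b\deq\temp\sqrt n=O(1)$; hence $\pi$ is carried, up to mass $o(1)$, by $|k|\asymp n^{3/4}$, and $\var_{\mu_n}(S)\asymp n^{3/2}$. Finally, adapting the reduction of \cite{LLP}, the mixing times of $(X_t)$ and of $(S_t)$ agree up to an additive $O(n\log n)=o(n^{3/2})$ (an extra burn-in for the coordinates to equilibrate given the magnetization), so it suffices to control $(S_t)$; and since $S_t$ is a projection, $d_n(t)\ge\|\P_\sigma(S_t\in\cdot)-\pi\|_\mathrm{TV}$.

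\medskip

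\noindent\textbf{The mixing time.} For the upper bound $\tmix=O(n^{3/2})$ I would couple two copies of the magnetization chain in three phases. (i) From an arbitrary start, a one-chain drift estimate from $g$ together with concentration puts the magnetization inside the window $W\deq\{|k|\le An^{3/4}\}$, and keeps it there up to rare short excursions, within $O(n^{3/2})$ steps --- the bottleneck being the descent from $|k|\asymp\epsilon n$ to $|k|\asymp n^{3/4}$, where $g(k)\approx-k^3/(3n^3)$ so that $|S_t|^{-2}\approx 2t/(3n^3)$, costing $\Theta(n^{3/2})$ steps. (ii) Once both copies are in $W$, run a reflection-type coupling under which the gap between the two magnetizations performs a mean-zero walk (at worst mildly repelling, on the supercritical side) of step-variance $\Theta(1)$, so it reaches $0$ within $O((\operatorname{diam}W)^2)=O(n^{3/2})$ steps while both magnetizations stay in a slightly enlarged window; on the supercritical side this uses $\temp^2 n=O(1)$, which makes the barrier at $k=0$ of height $\tfrac34\temp^2 n$ a genuine $O(1)$, hence crossed in $O(n^{3/2})$ steps rather than exponentially slowly. (iii) After the magnetizations agree, the remaining coordinates couple in a further $O(n\log n)=o(n^{3/2})$ steps by a coupon-collector-type argument as in \cite{LLP}. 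Combining the phases, $d_n(Cn^{3/2})\to0$. For the lower bound $\tmix=\Omega(n^{3/2})$, start from all-plus: then $S_t$ tracks the deterministic curve with $|S_t|^{-2}\approx 2t/(3n^3)$, so $S_{cn^{3/2}}$ is concentrated near $c^{-1/2}n^{3/4}$ --- with fluctuations only of order $n^{3/4}$, since the mean-reverting part of $g$ keeps the variance accumulated around the curve $O(n^{3/2})$ --- hence $\gg Mn^{3/4}$ once $c$ is small (and $S_{cn^{3/2}}$ still lies well beyond the well $|k|\asymp n^{3/4}$ on the supercritical side). Since $\pi$ puts mass $o(1)$ on $\{|k|\ge Mn^{3/4}\}$ as $M\to\infty$, the event $\{|S|\ge Mn^{3/4}\}$ forces $d_n(cn^{3/2})\to1$, so $\tmix(\epsilon)\ge c_\epsilon n^{3/2}$ for every fixed $\epsilon$.

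\medskip

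\noindent\textbf{Spectral gap and absence of cutoff.} For $\gap=O(n^{-3/2})$, use $f=S$ in the Rayleigh quotient $\gap=\min_f\mathcal E(f,f)/\var_{\mu_n}(f)$: a single step changes $S$ by at most $2$ and reaches one of $n$ neighbours with probability $O(1/n)$ each, so $\mathcal E(S,S)=\tfrac12\sum_{\sigma,\sigma'}\mu_n(\sigma)P(\sigma,\sigma')(S(\sigma)-S(\sigma'))^2=O(1)$, while $\var_{\mu_n}(S)\asymp n^{3/2}$. For $\gap=\Omega(n^{-3/2})$, combine $\tmix(1/4)=O(n^{3/2})$ with the general bound $\trel\le1+\tmix(1/4)/\log 2$. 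Thus $\gap\asymp n^{-3/2}$, and in particular $\tmix(1/4)\cdot\gap=\Theta(1)$ remains bounded; since a family of reversible chains exhibiting cutoff must satisfy $\tmix^{(n)}(\epsilon)\,\gap^{(n)}\to\infty$, there is no cutoff. (Equivalently, along subsequences with $\temp\sqrt n\to b$ the process $S_{n^{3/2}t}/n^{3/4}$ converges to the diffusion $dY_t=(\pm b\,Y_t-\tfrac13 Y_t^3)\,dt+\sqrt2\,dW_t$, whose total-variation distance to equilibrium decreases continuously in $t$, forcing a window of order $n^{3/2}=\Theta(\tmix)$.)

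\medskip

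\noindent\textbf{Main obstacle.} The crux is phase (ii) of the upper bound. The naive monotone (quantile) coupling gives the gap step-variance only $\Theta(\tilde D_t/n)$, hence coalesces a polynomial factor ($\approx n^{1/4}$) too slowly; a genuine reflection-type coupling does achieve the right rate, but must be controlled uniformly over the whole critical window. In particular, on the supercritical side $g'>0$ near the origin, so the gap is repelled, and one must exploit $\temp^2 n=O(1)$ to show this inflates the gap by only a bounded factor over the $O(n^{3/2})$-step horizon --- all the while confining both magnetizations to a window in which the cubic approximation to $g$ remains valid.
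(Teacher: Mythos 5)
Your high-level architecture is the same as the paper's (reduce everything to the magnetization birth-and-death chain, coalesce magnetizations in $O(n^{3/2})$ steps, finish with an $O(n\log n)$ coordinate coupling as in \cite{LLP}, prove the lower bound by showing the magnetization started high is still at scale $\gg n^{3/4}$ at time $cn^{3/2}$ while $\pi$ lives on scale $n^{3/4}$, and rule out cutoff via $\tmix(\frac14)\cdot\gap=O(1)$), but two of your central steps have genuine gaps. First, the step you yourself flag as the ``main obstacle'' --- a reflection-type coupling whose difference walk must be controlled uniformly over the window, including the supercritical repulsion near $0$ --- is left unproved, and it is also unnecessary: the paper's Lemma \ref{lem-taumag-tau-0} coalesces the two magnetization chains with no analysis of the difference at all, by first equalizing $|S_t|$ and $|\tS_t|$ and then running the anti-symmetric coupling $S_t=-\tS_t$ until $|S_t|\le \frac1n$, an event controlled by the single-chain hitting estimate of Theorem \ref{thm-tau-0} (the $\temp^2n=O(1)$ case); your phase (i) drift bound, pushed all the way to $\tau_0$, already suffices. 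Second, in the lower bound, the assertion that $S_t$ started from all-plus ``tracks the deterministic curve with fluctuations only $O(n^{3/4})$'' is not justified by the variance bound you invoke: $\var_{s_0}(S_t)=O(t)$ controls the spread around $\E S_t$, not the location of $\E S_t$, and lower-bounding the mean along the curve requires $\E\tanh(\beta S_t)\ge\beta\,\E S_t-\tfrac13\beta^3\,\E|S_t|^3$ together with an \emph{upper} bound on the third moment (concavity/Jensen goes the wrong way here). This is precisely why the paper develops the third-moment recursion \eqref{eq-3rd-moment-3} and the $Z_t=|S_t|\eta^{-t}$ analysis, started from $s_0=An^{-1/4}$; without that (or an equivalent pathwise concentration argument, e.g.\ for $S_t^{-2}$), your Chebyshev step has nothing to stand on.

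On the spectral gap, your route is genuinely different from the paper's and, with one supplement, viable: taking $f=S$ in the Dirichlet form gives $\gap\le\mathcal{E}(S)/\var_{\mu_n}(S)=O(n^{-3/2})$ \emph{provided} $\var_{\mu_n}(S)\ge c\,n^{3/2}$ uniformly over $\temp\sqrt n=O(1)$. The paper deliberately avoids this shortcut (see the remark closing Section \ref{sec:critical-spectral}) because it only quotes the stationary variance lower bound at $\beta_c=1$ from \cite{ENR}, and instead proves $\tmix(\frac14)=O(\trel)$ by contradiction using the birth-and-death hitting-time lemmas of \cite{DLP}. Your Stirling expansion of $\log\pi(k)$ (or, equivalently, the convergence \eqref{eq-norm-mag-convergence} that the paper itself establishes, combined with a truncation argument such as $\var(Y)=\tfrac12\E(Y-Y')^2\ge\tfrac12\E[(Y-Y')^2\wedge M]$) does supply that variance lower bound throughout the window, so this part of your plan is a legitimate, arguably simpler, alternative; the lower bound on the gap via $\trel\lesssim\tmix(\frac14)$ and the product criterion for cutoff coincide with the paper.
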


\begin{maintheorem}[Supercritical regime]\label{thm-low-temp}
Let $\temp =\temp(n)>0$ be such that $\temp^2 n\to\infty$ with $n$.
  The mixing-time of the Glauber dynamics for the mean-field Ising model
   with parameter $\beta=1+\temp$ does not exhibit cutoff, and has order $$\texp(n) \deq \frac{n}{\temp}\exp\left(\frac{n}{2}\int_{0}^{\zeta} \log \left(\frac{1+g(x)}{1-g(x)}\right)dx \right)~,$$
  where $g(x)\deq\left(\tanh(\beta x)-x\right)/\left(1-x\tanh(\beta x)\right)$, and $\zeta$ is the unique positive root of $g$.
  In particular, in the special case $\temp\to 0$, the order of the mixing time is $ \frac{n}{\temp}\exp \left((\frac{3}{4}+o(1))\temp^2 n\right)$, where the $o(1)$-term tends to $0$ as $n\to\infty$.
  In addition, the spectral gap of the dynamics in this regime has order $1/\texp(n)$.
\end{maintheorem}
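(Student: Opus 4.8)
The plan is to project onto the one-dimensional magnetization chain and analyze it as a birth-and-death chain with an exponentially deep bottleneck at magnetization zero.

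\emph{Reduction.} The magnetization $S_t=\sum_v X_t(v)$ is itself a birth-and-death chain on $\{-n,-n+2,\dots,n\}$, reversible with respect to the push-forward $\pi_n$ of $\mu_n$. Since the dynamics is attractive, monotone coupling applies, and --- as in \cite{LLP} --- conditionally on its magnetization the law of $X_t$ re-randomizes within a level on the polynomial timescale $O((n/\temp)\log(\temp^2 n))$; consequently both $d_n(t)$ and $\gap$ for $(X_t)$ agree with the corresponding quantities for $(S_t)$ up to this timescale, which is $o(\texp(n))$. It therefore suffices to study $(S_t)$.

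\emph{Equilibrium and the barrier.} Stirling's formula gives $\pi_n(s)=\exp\!\big(n\phi(s/n)+O(\log n)\big)$, where, up to an additive constant, $\phi(m)=\tfrac{\beta}{2}m^2-\tfrac{1+m}{2}\log(1+m)-\tfrac{1-m}{2}\log(1-m)$. Its critical points are $0$, a local minimum since $\phi''(0)=\beta-1=\temp>0$, and $\pm\zeta$, the two global maxima, where $\zeta$ is the positive root of $m=\tanh(\beta m)$, i.e.\ of $g$. Using the identity $\operatorname{arctanh}g(x)=\operatorname{arctanh}(\tanh\beta x)-\operatorname{arctanh}x=\beta x-\operatorname{arctanh}x=\phi'(x)$, the barrier height is
$$\phi(\zeta)-\phi(0)=\int_0^\zeta\phi'(x)\,dx=\tfrac12\int_0^\zeta\log\frac{1+g(x)}{1-g(x)}\,dx,$$
which, when $\temp\to0$, satisfies $\zeta^2=(3+o(1))\temp$ and hence equals $(\tfrac34+o(1))\temp^2$.

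\emph{Crossing time.} Apply the exact birth-and-death formula for expected hitting times to $\tau$, the first time $(S_t)$ reaches the well opposite its start, and evaluate it by Laplace's method: the summand is largest near the saddle $s\approx0$, over a window of width $\asymp\sqrt{n/\temp}$ (set by $\phi''(0)=\temp$) and of size $\asymp 1/\pi_n(0)$, and the Stirling asymptotics of $\pi_n$ near the wells (where $|\phi''(\zeta)|=(2+o(1))\temp$) fix the normalization. This yields $\E_\sigma[\tau]\asymp\texp(n)$ --- exponential rate exactly $\phi(\zeta)-\phi(0)$, polynomial prefactor of order $n/\temp$ --- uniformly over all starts $\sigma$ with $|S(\sigma)|\ge\zeta n/2$. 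Since $\E_\sigma[\tau]$ dwarfs the relaxation time $O((n/\temp)\log(\temp^2 n))$ of $(S_t)$ confined to one well, a standard exponential-approximation theorem gives $\tau/\E_\sigma[\tau]\Rightarrow\operatorname{Exp}(1)$; in particular $\tau$ has no heavy lower tail.

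\emph{From the crossing time to the three claims.} On the scale $\texp(n)$ the chain behaves like a two-state chain (``$+$ well'' vs.\ ``$-$ well'') switching at rate $\asymp1/\texp(n)$: from an arbitrary start the magnetization enters a well within $o(\texp(n))$ steps, after which the only slow mode is the inter-well crossing. Hence: (i) $\tmix(\epsilon)\asymp\texp(n)$ for each fixed $\epsilon$ --- the upper bound because the crossing is asymptotically exponential and within-well mixing is fast, the lower bound via the distinguishing statistic $\operatorname{sign}(S_t)$, which has $\mu_n$-mean $0$ but stays $\approx+1$ for $t\ll\texp(n)$, using that $\tau$ has no heavy lower tail; (ii) no cutoff, since $\tmix(\epsilon)-\tmix(1-\epsilon)\asymp\texp(n)\log\frac{1-\epsilon}{\epsilon}$, which for small $\epsilon$ is of the same order as $\texp(n)\asymp\tmix(\tfrac14)$; (iii) $\gap\asymp1/\texp(n)$ --- the upper bound from the variational principle with the \emph{harmonic} test function across the barrier (constant current $c(s,s+2)\big(f(s+2)-f(s)\big)\equiv I$, whose Dirichlet energy is $4\big/\sum_s c(s,s+2)^{-1}\asymp1/\texp(n)$; note the plain indicator $\oneb{S_t>0}$ gives only $O\!\big(\sqrt{\temp/n}\,e^{-n(\phi(\zeta)-\phi(0))}\big)$, losing a factor $\sqrt{n/\temp}$), and the lower bound from $\trel\le1+\tmix(\tfrac18)/\log4$ combined with (i).

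\emph{Main difficulty.} The crux is the crossing-time estimate: extracting the exact exponential rate $\phi(\zeta)-\phi(0)$ and, more delicately, the sharp order $n/\temp$ of the polynomial prefactor, uniformly as $\temp=\temp(n)\to0$ with $\temp^2 n\to\infty$. This demands uniform Stirling/Laplace control of $\pi_n$ simultaneously near the two wells and near the saddle, with attention to how the widths $\asymp\sqrt{n/\temp}$ interact with the barrier. A secondary point, easily overlooked, is that the sharp spectral-gap upper bound needs the harmonic test function rather than a well indicator.
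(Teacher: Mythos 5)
Your strategy coincides with the paper's at its core---everything is reduced to the magnetization birth-and-death chain and to the order of the barrier-crossing time, with the same exponent and the same prefactor $n/\temp$---but several implementations differ. Your identity $\tfrac12\log\frac{1+g(x)}{1-g(x)}=\beta x-\operatorname{arctanh}x$ shows that your barrier $\phi(\zeta)-\phi(0)$ is literally the exponent in $\texp$, and your two Laplace windows of width $\asymp\sqrt{n/\temp}$ (saddle and well) are exactly the paper's factorization $\E C_{0,\zeta}=2c_S R(0\leftrightarrow\zeta)$ with $c_S=\Theta\big(\sqrt{n/\temp}\,c_\zeta\big)$ and $R(0\leftrightarrow\zeta)=\Theta\big(\sqrt{n/\temp}\big)$ (Lemmas \ref{lem-order-conductance} and \ref{lem-order-resistance}); the paper obtains these by elementary bounds on the ratios $c_{x+2/n}/c_x$, which is how it achieves the uniform-in-$\temp$ control you rightly single out as the crux (and which your sketch asserts via Stirling/Laplace rather than carries out). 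Downstream of the crossing estimate you invoke a metastability package---exponential limit law for the crossing time, a two-state approximation, fast relaxation within a well---whereas the paper stays elementary: $\E_1\tau_\zeta=o(\texp)$ by a supermartingale drift argument (Lemma \ref{lem-tau-zeta-from-1}), coalescence and full mixing via Lemmas \ref{lem-taumag-tau-0} and \ref{lem-fullmix-taumag}, the lower bound via monotonicity together with the geometric-trials bound of Lemma \ref{lem-reverse-markov} and Lemma \ref{lem-stationary-zeta-1}, and no cutoff from $\gap\cdot\tmix(\tfrac14)=O(1)$ (the product criterion). Your route is workable but adds an ingredient you never prove: the exponential law requires the relaxation time of the chain restricted to one well to be $o(\texp)$ uniformly as $\temp\to0$ (essentially the censored-dynamics gap of the companion paper), while the paper's Markov-inequality/coupling route avoids this entirely. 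For the spectral gap, your harmonic test function in the Dirichlet form is a correct and more self-contained alternative to the paper's contradiction argument via Lemma \ref{lem-hitting-ratio}, and your remark that the plain indicator loses a factor $\sqrt{n/\temp}$ is right and is indeed needed for the exact order; you still must pass from the magnetization chain to the full dynamics, which the paper does in both directions via the monotone eigenfunction of Proposition \ref{prop-spetral-glauber-mag}, although for the gap upper bound the lumping inequality (eigenvalues of the projected chain are eigenvalues of the full chain) already suffices, and the lower bound follows from $\trel\lesssim\tmix(\tfrac14)$ as you say.
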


%Our results show that cutoff occurs in the Curie-Weiss model only at the high-temperature regime, $\beta = 1-\delta$ for $0<\delta<1$ such that $\delta^2 n\to\infty$ with $n$. This confirms the following conjecture of the third author for the special case where the underlying graph is the complete graph.
%\begin{mainconj}
%Let $(G_{n})$ be a sequence of transitive graphs. If the Glauber dynamics on $(G_{n})$ has mixing-time $\tmix^{(n)}(\frac{1}{4}) = O(n\log n)$, then there is cutoff.
%\end{mainconj}

As we further explain in Section \ref{sec:outline}, the key element in the proofs of the above theorems is understanding the behavior of the sum of all spins (known as the \emph{magnetization chain}) at different temperatures. This function of the dynamics turns out to be an ergodic Markov chain as well, and namely a \emph{birth-and-death} chain (a one-dimensional chain, where only moves between neighboring positions are permitted). In fact, the reason for the exponential mixing at low-temperature is essentially that this
magnetization chain has two centers of mass, $\pm \zeta n$ (where $\zeta$ is as defined in Theorem \ref{thm-low-temp}), with an exponential commute time between them. Figure \ref{fig:mag-stat} demonstrates how the single center of mass around $0$ that this chain (rescaled) has at high near-critical temperature proceeds to split into two symmetric centers of mass that drift further and further apart as the temperature decreases.
\begin{figure}
\centering \fbox{\includegraphics[width=4in]{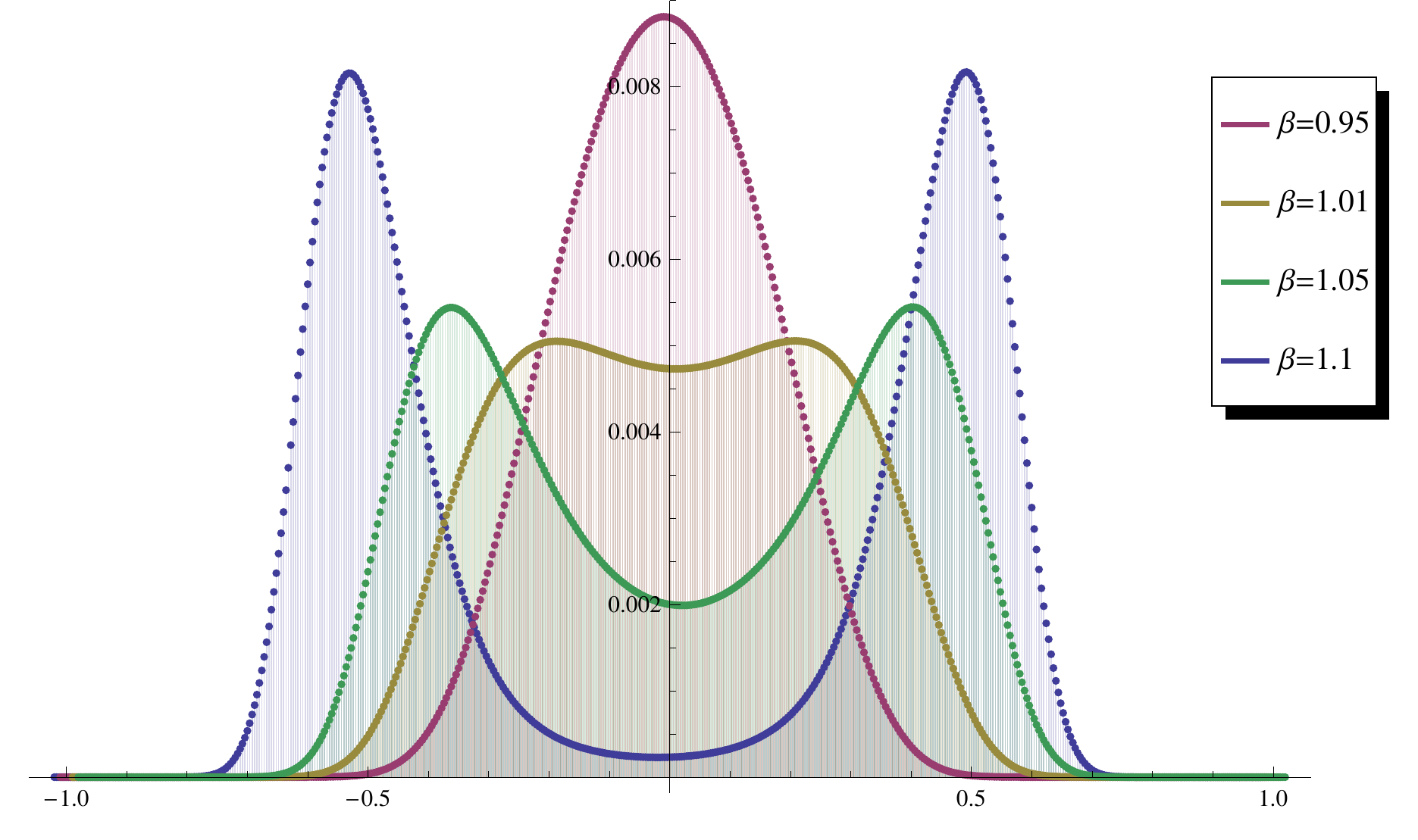}}
\caption{The stationary distribution of the normalized magnetization chain (average of all spins) for the dynamics on $n=500$ vertices. The center of mass at high temperatures (see $\beta=0.95$) is at $0$. Low temperatures feature two centers of mass at $\pm \zeta$ (where $\zeta$ is the unique
positive solution of $\tanh(\beta x) = x$), leading to the exponential mixing time. }
\label{fig:mag-stat}
\end{figure}

In light of this, a natural question that rises is whether the above mentioned bottleneck between the two centers of mass at $\pm \zeta n$ is the \emph{only} reason for the exponential mixing-time at low temperatures.
Indeed, as shown in \cite{LLP} for the strictly supercritical regime, $\beta >1$ fixed, if one restricts the Glauber dynamics to non-negative magnetization (known as the \emph{censored} dynamics), the mixing-time becomes $\Theta(n\log n)$ just like in the subcritical regime. Formally, the censored dynamics is defined as follows: at each step, a new state $\sigma$ is generated according to the original rule of the Glauber dynamics, and if a negative magnetization is reached ($S(\sigma) < 0$) then $\sigma$ is replaced by $-\sigma$. Interestingly, this simple modification suffices to boost the mixing-time back to order $n\log n$, just as in the high temperature case, and thus raises the question of whether the symmetry between the high temperature regime and the low temperature censored regime applies also to the existence of cutoff.

In a companion paper \cite{DLP-cens}, we strengthen the result of \cite{LLP} by showing that the scaling window of $1/\sqrt{n}$ exists also for the censored low temperature case, beyond which cutoff indeed occurs (yet at a different location than in the symmetric high temperature point).

\begin{maintheorem}\label{thm-cens-low-temp}
  Let $\temp > 0$ be such that $\temp^2 n\to\infty$ arbitrarily slowly with $n$. Then
  the \emph{censored} Glauber dynamics for the mean field Ising model
   with parameter $\beta=1+\temp$
    has a cutoff at $$t_n = \left(\frac{1}{2}+\frac{1}{2(\zeta^2 \beta/\temp - 1)}\right)\frac{n}{\temp}\log(\temp^2 n)$$
  with a window of order $n/\temp$. In the special case of the dynamics started from the all-plus configuration,
  the cutoff constant is $[2(\zeta^2 \beta/\temp - 1)]^{-1}$ (the order of the cutoff point and the window size remain the same).
\end{maintheorem}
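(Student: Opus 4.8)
\emph{Proof strategy.}
As in the proofs of Theorems~\ref{thm-high-temp}--\ref{thm-low-temp}, the plan is to reduce everything to the one‑dimensional \emph{magnetization chain} $S_t=\sum_x X_t(x)$, which for the censored dynamics is a birth‑and‑death chain on the integers of $[0,n]$ of the parity of $n$, reflected at $0$. First I would invoke the projection/coupling reduction of \cite{LLP} (adapted to the censored chain): the total‑variation distance between the law of $X_t$ from $\sigma_0$ and $\mu_n$ is within $o(1)$, at the relevant times, of the distance between the law of $S_t$ from $S(\sigma_0)$ and the stationary law of $(S_t)$ --- the lower bound being immediate since magnetization is a function of the configuration, the upper bound coming from a monotone coupling of the magnetization chains together with the fact that the within‑level coordinates equilibrate concurrently with (and faster than) the macroscopic motion of $S_t$, in a further $o(w_n)$ steps, $w_n:=n/\delta$. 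Thus it suffices to prove cutoff at $t_n$ with window $w_n$ for $(S_t)$.

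\emph{Local behaviour of $(S_t)$.}
Let $\Psi(s):=\E[S_{t+1}-S_t\mid S_t=s]=\tanh(\beta s/n)-s/n$ (perturbed by only $O(1/n)$ at $s\in\{0,2\}$ because of the reflection), and note the per‑step variance of $S_t$ is bounded and equals $2(1-\zeta^2)+o(1)$ near $s=\zeta n$. Then $\Psi$ has the \emph{repelling} zero $0$, where $\Psi(s)=(\delta/n)\,s+O(s^3/n)$, and the \emph{attracting} zero $\zeta n$ (recall $\tanh(\beta\zeta)=\zeta$), where $\Psi(s)=-(c/n)(s-\zeta n)+O((s-\zeta n)^2/n)$ with $c:=1-\beta(1-\zeta^2)=\delta(\zeta^2\beta/\delta-1)>0$; set $r:=\zeta^2\beta/\delta-1$, so $c=\delta r$. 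Near $\zeta n$ the chain behaves like a discrete Ornstein--Uhlenbeck process with per‑step contraction $c/n$ and equilibrium width $\sigma_n\asymp\sqrt{n/c}\asymp\sqrt{n/\delta}$; on any set of magnetizations bounded away from $\{0,\zeta n\}$ it shadows (after rescaling $\tau=t/n$) the solution of $\dot m=\tanh(\beta m)-m$, whose transit time is $O(1/\delta)$, i.e.\ $O(w_n)$ steps, hence negligible; and $\zeta n/\sigma_n\asymp(\delta^2 n)^{1/2}\to\infty$, so any macroscopic offset from $\zeta n$ dwarfs $\sigma_n$.

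\emph{Key estimate: time to reach and relax at $\zeta n$.}
The crux is a sharp two‑phase estimate on the time for $S_t$ to reach, and relax to within $O(\sigma_n)$ of, $\zeta n$. For the extremal start $S_0=0$: in \emph{Phase~1} there is no deterministic drift, so after $O(w_n)$ steps of diffusion off the reflecting boundary the chain climbs by noise amplified by the factor $(1+\delta/n)^t$; one shows $S_t$ is then of size $(n/\delta)^{1/2}e^{\delta t/n}$ with $\Theta(1)$ relative fluctuations, reaching order $\zeta n$ at $t_1=\tfrac{n}{2\delta}\log(\zeta^2\delta n)+O(w_n)=\tfrac{n}{2\delta}\log(\delta^2 n)+O(w_n)$ (using $\log(\zeta^2\delta n)=\log(\delta^2 n)+O(1)$, valid whenever $\delta^2 n\to\infty$). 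At time $t_1$ the offset from $\zeta n$ is typically of order $\zeta n$, and in \emph{Phase~2} it contracts by $e^{-c(t-t_1)/n}$, so $S_t$ comes within $O(\sigma_n)$ of $\zeta n$ at $t_1+\tfrac{n}{c}\log(\zeta n/\sigma_n)+O(w_n)=t_n+O(w_n)$, with $\tfrac{n}{c}\log(\zeta n/\sigma_n)=\tfrac{n}{2\delta r}\log(\zeta^2 cn)+O(w_n)=(\tfrac1{2r}+o(1))\tfrac{n}{\delta}\log(\delta^2 n)$. For the all‑plus start $S_0=n$, Phase~1 is replaced by a descent from $m=1$ whose only slow part is the final approach to $\zeta n$, i.e.\ Phase~2 alone, giving hitting time $(\tfrac1{2r}+o(1))\tfrac{n}{\delta}\log(\delta^2 n)+O(w_n)$. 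Making this rigorous --- via optional stopping with suitable Lyapunov functions, controlling the noise amplification and the reflecting boundary near $0$, the Ornstein--Uhlenbeck relaxation near $\zeta n$, and matching upper and lower tails on the hitting time to precision $O(w_n)$, uniformly in $\delta=\delta(n)$ --- is the step I expect to be the main obstacle.

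\emph{From the hitting estimate to cutoff.}
For the upper bound, monotonically couple the chain from an arbitrary start (the worst being $S_0=0$) with a stationary chain: by time $t_n+Kw_n$ both lie within $O(\sigma_n)$ of $\zeta n$ and have relaxed there, and in the linearised region their difference is a mean‑reverting walk with $O(1)$ step‑variance starting from $O(\sigma_n)$, which absorbs at $0$ within $O(\sigma_n^2)=O(w_n)$ further steps with probability $\to1$ as $K\to\infty$; hence $d_n(t_n+Kw_n)\to0$. For the lower bound, at $t_n-Kw_n$ Phase~2 is incomplete: the Phase‑1 offset from $\zeta n$, of order $\zeta n$, has been contracted only to order $\zeta n\cdot(\sigma_n/\zeta n)\,e^{rK}=e^{rK}\sigma_n$ (with comparable residual spread), so the law of $S_{t_n-Kw_n}$ lies a distance $\asymp e^{rK}\sigma_n\gg\sigma_n$ from $\zeta n$ and assigns only $O(e^{-rK})$ mass to the $O(\sigma_n)$‑neighbourhood of $\zeta n$ carrying all but $\epsilon$ of the stationary mass; thus $d_n(t_n-Kw_n)\ge1-\epsilon-O(e^{-rK})\to1$. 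This gives cutoff at $t_n$ with window $n/\delta$ and constant $\tfrac12+\tfrac1{2r}=\tfrac12+\tfrac1{2(\zeta^2\beta/\delta-1)}$; from the all‑plus start only Phase~2 survives, giving constant $\tfrac1{2r}$. Finally, for the special case $\delta\to0$ one substitutes $\zeta=(3\delta)^{1/2}(1+o(1))$ and $r=\zeta^2\beta/\delta-1=2+o(1)$ to obtain the constants $\tfrac34$ and $\tfrac14$.
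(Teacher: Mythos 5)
First, note that this paper does not actually prove Theorem \ref{thm-cens-low-temp}: it is quoted here from the companion paper \cite{DLP-cens}, so there is no in-paper proof to compare against. Your outline does identify the correct mechanism and gets the constants right: escape from the repelling zero of the drift at rate $\delta/n$ contributes $\frac{n}{2\delta}\log(\delta^2 n)$, the Ornstein--Uhlenbeck--type relaxation near $\zeta$ at rate $c/n$ with $c=1-\beta(1-\zeta^2)=\delta(\zeta^2\beta/\delta-1)$ contributes $\frac{1}{2(\zeta^2\beta/\delta-1)}\frac{n}{\delta}\log(\delta^2 n)$, and from the all-plus configuration only the second phase is present since the nonlinear descent from $1$ to within $O(\sqrt{\delta})$ of $\zeta$ takes only $O(n/\delta)$ steps. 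This is indeed the skeleton of the argument in \cite{DLP-cens}.

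As a proof, however, there are genuine gaps. The central one you acknowledge yourself: the two-phase hitting/relaxation estimate must be sharp to precision $O(n/\delta)$, uniformly in $\delta=\delta(n)$, and the lower bound additionally needs a concentration statement at time $t_n-Kn/\delta$ (your claim that only $O(e^{-rK})$ of the mass lies within $O(\sigma_n)$ of $\zeta n$ does not follow from the mean trajectory alone; since the position at a fixed time near the end of Phase~1 has $\Theta(1)$ relative spread, one must argue through the distribution of the hitting time, or through moment estimates in the spirit of the third-moment analysis of Subsection \ref{sec:high-lower-mag}, none of which is supplied). This estimate \emph{is} the content of the theorem, so the proposal is a strategy rather than a proof. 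A second gap is the reduction from the full censored dynamics to its magnetization: you assert in one sentence that the configuration equilibrates within $o(n/\delta)$ steps after the magnetization does, but the only tool in this paper, Lemma \ref{lem-fullmix-taumag}, gives $O(n\log n)$ extra steps, which exceeds the window $n/\delta$ whenever $\delta \gg 1/\log n$; one needs a Two-Coordinate-Chain argument in the spirit of Theorem \ref{thm-two-coord-chain-subc}, redone for configurations whose magnetization concentrates near $\zeta$ rather than near $0$ (this occupies a substantial part of \cite{DLP-cens}). Finally, the monotone coupling and the identification of the worst starting state for the \emph{censored} chain require justification, since the reflection step at negative magnetization is not obviously compatible with the coordinatewise monotone coupling of Subsection \ref{sec:monotone coupling}.
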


\begin{maintheorem}\label{thm-cens-low-temp-spectral}
  Let $\temp>0$ be such that $\temp^2 n\to\infty$ arbitrarily slowly with $n$.
  Then the \emph{censored} Glauber dynamics for the mean field Ising model
   with parameter $\beta=1+\temp$
    has a spectral gap of order $\temp /n$.
\end{maintheorem}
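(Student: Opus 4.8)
\emph{Proof sketch.} The plan is to reduce the question to the \emph{magnetization chain} of the censored dynamics --- the birth-and-death chain on $\{0,1,\dots,n\}$ whose stationary law $\mu_n^+$ is that of $|S|$ under $\mu_n$ (equivalently, $\mu_n$ folded onto $\{S\ge 0\}$). Since $\mu_n^+(s)$ is proportional to $\binom{n}{(n+s)/2}\exp(\beta s^2/2n)$, Stirling's formula gives $\mu_n^+(s)=\exp(-n\Psi(s/n))$ up to a sub-exponential prefactor, where $\Psi(m)=\tfrac{1+m}{2}\log(1+m)+\tfrac{1-m}{2}\log(1-m)-\tfrac{\beta m^2}{2}$, normalized so that $\min_{[0,1)}\Psi=\Psi(\zeta)=0$, with $\Psi''(\zeta)=\tfrac1{1-\zeta^2}-\beta=(2+o(1))\temp$. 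Hence $\mu_n^+$ concentrates at $\zeta n$ with $\var_{\mu_n^+}(S)\asymp n/\temp$, while the barrier near $S=0$ satisfies $\mu_n^+(0)/\mu_n^+(\zeta n)=\exp(-(\tfrac34+o(1))\temp^2 n)$. (The Gaussian approximation of $\mu_n^+$ on the scale $\sqrt{n/\temp}$ about $\zeta n$ is legitimate precisely because $\temp^2 n\to\infty$: the cubic correction to $n\Psi$ on that scale is $\asymp(\temp^2 n)^{-1/2}=o(1)$.)

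The upper bound $\gap\le C\temp/n$ is immediate from the variational inequality $\gap\le\mathcal E(f,f)/\var_{\mu_n^+}(f)$ applied to $f(\sigma)=S(\sigma)$: one censored heat-bath step changes $S$ by at most $2$ (including when a sign-flip is applied, which forces $S(\sigma)\in\{0,1\}$ and still moves $S$ by at most $2$), so $\mathcal E(S,S)\le 2$ and $\gap\le 2/\var_{\mu_n^+}(S)\asymp\temp/n$.

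For the matching lower bound it suffices to bound the spectral gap of the magnetization chain below by $c\temp/n$. Indeed, $L^2(\mu_n^+)$ splits into the $P$-invariant subspace $V_0$ of functions of $S$ (on which $P$ acts as the magnetization chain) and its orthogonal complement $V_1$, which is invariant since the censored chain is reversible --- hence self-adjoint --- with respect to $\mu_n^+$; on $V_1$ a standard single-site computation gives relaxation at rate $\gtrsim 1/n$ (the coordinate-refresh rate, which the near-critical interaction cannot slow down), so $\gap=\min(\gap_{\mathrm{mag}},\gap_{\mathrm{int}})$ with $\gap_{\mathrm{int}}\gtrsim 1/n\ge\temp/n$. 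To bound $\gap_{\mathrm{mag}}$ I would invoke the discrete Hardy (Muckenhoupt) inequality for the birth-and-death chain: writing $Q_k=\mu_n^+(k)\,p(k,k{+}1)$ for its conductances, $M[a,b]=\sum_{a\le k\le b}\mu_n^+(k)$ and $R[a,b]=\sum_{a\le k<b}1/Q_k$, its relaxation time satisfies, with reference point $\rho=\zeta n$,
\[
\trel\ \le\ \sup_{x>\rho}M[x,n]\,R[\rho,x]\ +\ \sup_{x<\rho}M[0,x]\,R[x,\rho].
\]
Since $\zeta\to 0$, roughly half the spins carry each sign over the whole relevant range, so $p(k,k\pm1)\asymp1$ and $Q_k\asymp\mu_n^+(k)$; then the factor $e^{\pm n\Psi}$ cancels between $M$ and $R$ in each product, and a direct estimate gives both suprema of order $n/\temp$ --- the dominant cuts lying at distance $\asymp\sqrt{n/\temp}$ from $\zeta n$ (where the product is, up to constants, $\var_{\mu_n^+}(S)$ times a bounded conductance) and at distance $\asymp\sqrt{n/\temp}$ from $S=0$, with no cut exceeding order $n/\temp$ thanks to the exponential decay of $\mu_n^+$ toward the boundary. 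Hence $\gap_{\mathrm{mag}}\ge c\temp/n$.

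I expect the lower bound to be the crux, and within it the verification that the Hardy functionals are of order $n/\temp$ rather than exponentially large: this rests on precise two-sided control of $\mu_n^+(k)$ \emph{along the whole path} (not merely near the peak), on the exact cancellation of the two exponential factors in the barrier region, and it must be uniform over all rates at which $\temp^2 n\to\infty$. One cannot shortcut it via the mixing-time estimate of Theorem~\ref{thm-cens-low-temp}: the general bound $\trel\le\tmix(1/4)/\log 2$ gives only $\gap\ge c\temp/\big(n\log(\temp^2 n)\big)$, off by a logarithmic factor, so a genuinely spectral argument is needed. The remaining ingredients --- the test-function upper bound, the splitting $L^2(\mu_n^+)=V_0\oplus V_1$, and the internal gap --- are routine (in particular the bottom of the spectrum is harmless, since the heat-bath holding probabilities are bounded below).
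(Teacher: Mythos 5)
First, a point about comparison: this paper does not actually prove Theorem~\ref{thm-cens-low-temp-spectral} --- it is quoted from the companion paper \cite{DLP-cens}, so there is no in-paper proof to measure your argument against. Judged on its own terms, your outline gets the correct picture and scales: the folded stationary law $\mu_n^+$ peaks at $\zeta n$ with fluctuations of order $\sqrt{n/\delta}$, the barrier at $0$ has height $\exp\left((\frac34+o(1))\delta^2 n\right)$, and $\delta^2 n\to\infty$ is exactly what makes the folding at $0$ negligible on the Gaussian scale. The two chain-level ingredients are sound in principle: the test-function upper bound with $f=S$ (provided you actually prove $\var_{\mu_n^+}(S)\gtrsim n/\delta$ via a two-sided local Stirling estimate --- positive correlation alone only gives $\gtrsim n$, which is not enough here), and a Muckenhoupt/Hardy estimate showing the censored magnetization chain has relaxation time $O(n/\delta)$, with the dominant cuts at distance $\asymp\sqrt{n/\delta}$ from $\zeta n$ and from $0$ (minor caveats: near $k=n$ the rate $p(k,k+1)$ is not $\asymp 1$, and for $\delta$ not tending to $0$ the ``half the spins of each sign'' heuristic needs adjusting; both are harmless because the mass there is negligible). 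You correctly flag that this verification is unexecuted; it is laborious but plausible. Note also that the paper's own machinery for the analogous chain-gap bounds runs through Dirichlet forms with moment estimates and the birth-and-death results of \cite{DLP} (Lemma~\ref{lem-hitting-ratio}), not through a Hardy inequality, so your route for the one-dimensional chain is genuinely different but legitimate.

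The genuine gap is the reduction from the full censored dynamics to its magnetization chain. You split $L^2(\mu_n^+)=V_0\oplus V_1$ into functions of $S$ and their orthogonal complement and assert that on $V_1$ ``a standard single-site computation'' gives relaxation at rate $\gtrsim 1/n$. No such standard computation exists: bounding the spectrum of the kernel restricted to the complement of the magnetization sector is precisely the hard part of this kind of statement, and it is where this paper, for the \emph{uncensored} dynamics, invests a genuinely nontrivial argument (Proposition~\ref{prop-spetral-glauber-mag}): monotone coupling of the magnetization chain, construction of a strictly increasing eigenfunction, lifting it to the hypercube, and Nacu's lemma identifying it with the second eigenvalue --- an argument which yields $\gap(\mathrm{full})=\gap(\mathrm{magnetization})$ without ever proving a $1/n$ bound on $V_1$. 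That argument does not transfer verbatim to the censored dynamics: the censored chain lives on the folded configuration space, its monotonicity/attractivity and the applicability of Nacu's lemma there require justification (which is presumably part of what \cite{DLP-cens} supplies). As written, your lower bound on the gap of the full censored dynamics does not follow from your sketched bound for the birth-and-death chain; you must either prove the claimed $\Omega(1/n)$ (or merely $\Omega(\delta/n)$) bound on $V_1$ --- e.g.\ via a Markov-chain decomposition theorem with control of the fixed-magnetization restriction chains, or via an increasing-eigenfunction argument adapted to the censored dynamics --- or the proof is incomplete at its central step.
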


%Namely, if $\beta = 1+\temp$ for some $\temp(n) > 0$ such that $\temp^2 n\to\infty$, then the censored dynamics exhibits cutoff
%at time $\left(1+\frac{1}{1 - \temp / (\zeta^2 \beta)}\right)\frac{n}{2\temp}\log(\temp^2 n)$ with window $n/\temp$, where $\zeta$ is as defined in Theorem \ref{thm-low-temp}.

Recalling Theorem \ref{thm-high-temp}, the above confirms that there is a symmetric scaling window of order $1/\sqrt{n}$ around the critical temperature, beyond which there is cutoff both at high and at low temperatures, with the same order of mixing-time (yet with a different constant), cutoff window and spectral gap.

%\subsection{Organization}
The rest of this paper is organized as follows. Section \ref{sec:outline} contains a brief outline of the proofs of the main theorems. Several preliminary facts on the Curie-Weiss model and on one-dimensional chains appear in Section \ref{sec:prelim}. Sections \ref{sec:high}, \ref{sec:critical} and \ref{sec:low} address the high temperature regime (Theorem \ref{thm-high-temp}), critical temperature regime (Theorem \ref{thm-critical-temp}) and low temperature regime (Theorem \ref{thm-low-temp}) respectively.
%The final section, Section \ref{sec:conclusion}, is devoted to open problems and concluding remarks.

\section{Outline of proof}\label{sec:outline}
In what follows, we present a sketch of the main ideas and arguments used in the proofs of the main theorems.
We note that the analysis of the critical window relies on arguments similar to those used for the subcritical and supercritical regimes. Namely, to obtain the order of the mixing-time in Theorem \ref{thm-critical-temp} (critical window), we study the magnetization chain using the arguments that appear in the proof of Theorem \ref{thm-high-temp} (high temperature regime). It is then straightforward to show that the mixing-time of the entire Glauber dynamics has the very same order. In turn, the spectral-gap in the critical window is obtained using arguments similar to those used in the proof of Theorem \ref{thm-low-temp} (low temperature regime). In light of this, the following sketch will focus on the two non-critical temperature regimes.

\subsection{High temperature regime}
\subsubsection*{Upper bound for mixing}
As mentioned above, a key element in the proof is the analysis of the \emph{normalized magnetization} chain, $(S_t)$, which is the average spin in the system. That is, for a given configuration $\sigma$, we define $S(\sigma)$ to be $\frac{1}{n}\sum_i \sigma(i)$, and it is easy to verify that this function of the dynamics is an irreducible and aperiodic Markov chain. Clearly, a necessary condition for the mixing of the dynamics is the mixing of its magnetization, but interestingly, in our case the converse essentially holds as well. For instance, as we later explain, in the special case where the starting state is the all-plus configuration, by symmetry these two chains have precisely the same total variation distance from equilibrium at any given time.

In order to determine the behavior of the chain $(S_t)$, we first keep track of its expected value along
 the Glauber dynamics. To simplify the sketch of the argument, suppose that our starting configuration
 is somewhere near the all-plus configuration. In this case, one can show that $\E S_t$ is monotone
    decreasing in $t$, and drops to order $\sqrt{1/\temp n}$ precisely at the cutoff point. Moreover, if we allow the dynamics to perform another $\Theta(n/\temp)$ steps (our cutoff window), then the
 magnetization will hit $0$ (or $\frac{1}{n}$, depending on the parity of $n$) with probability arbitrarily close to $1$. At that point, we essentially achieve the mixing of the magnetization chain.

It remains to extend the mixing of the magnetization chain to the mixing of the entire Glauber dynamics.
Roughly, keeping in mind the above comment on the symmetric case of the all-plus starting configuration, one can apply a similar argument to an arbitrary starting configuration $\sigma$, by separately treating the set of spins which were initially positive and those which were initially negative.
Indeed, it was shown in \cite{LLP} that the
following holds for $\beta < 1$ fixed (strictly subcritical regime). After a ``burn-in'' period of order $n$ steps,
the magnetization typically becomes not too biased. Next, if one runs two instances of the dynamics, from two
such starting configurations (where the magnetization is not too biased), then
by the time it takes their magnetization chains to coalesce, the entire configurations become relatively similar.
This was established by a so-called \emph{Two Coordinate Chain} analysis, where the two coordinates correspond to
the current sum of spins along the set of sites which were initially either positive or negative respectively.

By extending the above Two Coordinate Chain Theorem to the case of $\beta = 1-\delta$ where $\delta=\delta(n)$ satisfies $\delta^2 n \to \infty$, and combining it with second moment arguments and some additional ideas, we were able to show that the above behavior holds throughout this mildly subcritical regime. The burn-in time required for the typical magnetization to become ``balanced'' now has order $n/\delta$, and so does the time it takes the full dynamics of two chains to coalesce once their magnetization chains have coalesced. Thus, these two periods are conveniently absorbed in our cutoff window, making the cutoff of the magnetization chain the dominant factor in the mixing of the entire Glauber dynamics.

\subsubsection*{Lower bound for mixing}
While the above mentioned Two Coordinate Chain analysis was required in order to show that the entire Glauber dynamics mixes fairly quickly once its magnetization chain reaches equilibrium, the converse is immediate. Thus, we will deduce the lower bound on the mixing time of the dynamics solely from its magnetization chain.

The upper bound in this regime relied on an analysis of the first and second moments of the magnetization chain, however this approach is too coarse to provide a precise lower bound for the cutoff. We therefore resort to establishing an upper bound on the \emph{third} moment of the magnetization chain,
using which we are able to fine-tune our analysis of how its first moment changes along time. Examining the state of the system order $n/\delta$ steps before the alleged cutoff point, using concentration inequalities, we show that the magnetization chain is typically substantially far from $0$. Recalling Figure \ref{fig:mag-stat}, this implies a lower bound on the total variation distance of the magnetization chain to stationarity, as required.

\subsubsection*{Spectral gap analysis} In the previous arguments, we stated that the magnetization chain essentially dominates the mixing-time of the entire dynamics. An even stronger statement holds for the spectral gap: the Glauber dynamics and its magnetization chain have precisely the same spectral gap, and it is in both cases attained by the second largest eigenvalue. We therefore turn to establish the spectral gap of $(S_t)$.

The lower bound follows directly from the contraction properties of the chain in this regime. To obtain a matching upper bound, we use the Dirichlet representation for the spectral gap, combined with
an appropriate bound on the \emph{fourth} moment of the magnetization chain.

\subsection{Low temperature regime}%\label{subsec:outline-low}
\subsubsection*{Exponential mixing}
As mentioned above, the exponential mixing in this regime follows directly from the
behavior of the magnetization chain, which has a bottleneck between $\pm\zeta$.
To show this, we analyze the effective resistance between these two centers of mass, and obtain the precise order of the commute time between them. Additional arguments show that the mixing time of the entire Glauber dynamics in this regime has the same order.

\subsubsection*{Spectral gap analysis}
In the above mentioned proof of the exponential mixing, we establish that the commute time of the magnetization chain between $0$ and $\zeta$ has the same order as the hitting time from $1$ to $0$. We can therefore apply a recent result of \cite{DLP} for general birth-and-death chains, which implies that in this case the inverse of the spectral-gap (known as the relaxation-time) and the mixing-time must have the same order.

\section{Preliminaries} \label{sec:prelim}

\subsection{The magnetization chain}\label{subsec:magnetization}
The normalized magnetization of a configuration $\sigma\in\Omega$, denoted by $S(\sigma)$, is defined as
\begin{equation*}
S(\sigma) \deq \frac{1}{n} \sum_{i=1}^{n} \sigma(i)~.
\end{equation*}
Suppose that the current state of the Glauber dynamics is $\sigma$, and that site $i$ has been selected to have its spin updated. By definition, the
probability of updating this site to a positive spin is given by $p^{+}\left(S (\sigma)- \sigma(i)/n\right)$, where
\begin{equation}\label{eq-def-p-+}
p^{+}(s)\deq\frac{\mathrm{e}^{\beta s}} {\mathrm{e}^{\beta s} + \mathrm{e}^{-\beta s}}=\frac{1 + \tanh(\beta s)}{2}~.
\end{equation}
Similarly, the probability of updating the spin of site $i$ to a negative one is given by $p^{-}\left(S (\sigma)- \sigma(i)/n\right)$,
where
\begin{equation}\label{eq-def-p--}
 p^{-}(s)\deq \frac{\mathrm{e}^{-\beta s}}{\mathrm{e}^{\beta s} + \mathrm{e}^{-\beta s}}= \frac{1-\tanh(\beta s)}{2}~.
\end{equation}
It follows that the (normalized) magnetization of the Glauber dynamics at each step is a Markov chain, $(S_t)$,
with the following transition kernel:
\begin{align}\label{eq-magnet-transit}
P_M(s, s')=
\begin{cases}
\frac{1+s}{2}p^{-}(s-n^{-1}) & \hbox{ if } s' = s-\frac{2}{n}, \\
\frac{1-s}{2}p^{+}(s+ n^{-1}) & \hbox{ if } s'= s+ \frac{2}{n},\\
1-\frac{1+s}{2}p^{-}(s-n^{-1})- \frac{1-s}{2}p^{+}(s+ n^{-1}) & \hbox{ if } s'=s~.
\end{cases}
\end{align}
An immediate important property that the above reveals is the symmetry of $S_t$: the distribution
of $\left(S_{t+1} \mid S_t = s\right)$ is precisely that of $\left(-S_{t+1} \mid S_t = -s\right)$.

As evident from the above transition rules, the behavior of the Hyperbolic tangent will be useful in many arguments. This is illustrated in the following simple calculation, showing that the minimum over the holding probabilities of the magnetization chain is nearly $\frac{1}{2}$. Indeed, since the derivative of $\tanh(x)$ is bounded away from $0$ and $1$ for all $x\in[0,\beta]$ and any $\beta = O(1)$, the Mean Value Theorem gives
\begin{equation}\label{eq-holding-prob}
\begin{array}{rcc}
P_M(s,s+\frac{2}{n}) &=& \frac{1-s}{4}\left(1+\tanh(\beta s)\right)+O(n^{-1})~,\\
P_M(s,s-\frac{2}{n}) &=& \frac{1+s}{4}\left(1-\tanh(\beta s)\right)+O(n^{-1})~,\\
P_M(s,s) &=& \frac{1}{2}\left(1+s\tanh(\beta s)\right) - O(n^{-1})~.
\end{array}
\end{equation}
Therefore, the holding probability in state $s$ is at least $\frac{1}{2}-O\big(\frac{1}{n}\big)$. In fact, since $\tanh(x)$ is monotone increasing, $P_M(s,s) \leq \frac{1}{2}+\frac{1}{2}s\tanh(\beta s)$ for all $s$, hence these probability are also bounded from above by $\frac{1}{2}(1+\tanh(\beta)) < 1$.

Using the above fact, the next lemma will provide an upper bound for the coalescence time of two magnetization chains, $S_t$ and $\tS_t$, in terms of the hitting time $\tau_0$, defined as $\tau_0 \deq \min\{ t : |S_t| \leq n^{-1}\}$.
\begin{lemma}\label{lem-taumag-tau-0}
  Let $(S_t)$ and $(\tS_t)$ denote two magnetization chains, started from two arbitrary states.
  Then for any $\epsilon > 0$ there exists some $c_\epsilon > 0$, such that the following holds: if $T > 0$
  satisfies $\P_1(\tau_0 \geq T) < \epsilon$ then
   $S_t$ and $\tS_t$ can be coupled in a way such that they coalesce within at most $c_\epsilon T$ steps with probability at least $1-\epsilon$.
\end{lemma}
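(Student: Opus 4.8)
The plan is to use a \emph{monotone coupling} and reduce the coalescence of $S_t$ and $\tS_t$ to that of the two extremal magnetization chains. From \eqref{eq-magnet-transit} and the monotonicity of $\tanh$ one verifies that $P_M$ is a stochastically monotone birth-and-death chain — it suffices to check $P_M(s,s+\tfrac{2}{n})+P_M(s+\tfrac{2}{n},s)\le1$ for every $s$, which follows from the explicit formulas since $\beta=O(1)$. Hence all magnetization chains can be realised on a single probability space, driven by a common sequence of monotone update maps, in such a way that two chains started from ordered states remain ordered for all time. Let $R_t$ and $L_t$ be the chains started from magnetizations $1$ and $-1$; relabelling so that $S_0\ge\tS_0$, we obtain $L_t\le\tS_t\le S_t\le R_t$ for all $t$, so the instant $R_t$ and $L_t$ meet we have $S_t=\tS_t$. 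It thus suffices to bound the coalescence time of $R$ and $L$ by $c_\epsilon T$.

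The hypothesis is used through $\tau_0$ in two ways. First, by the monotone coupling the first hitting time of the band $\{|s|\le n^{-1}\}$ from any starting state is stochastically at most $\tau_0$, and by the reflection symmetry of $P_M$ the same is true for the first time of reaching the band from below; so both $R$ and $L$ have visited the band by time $O(T)$ with probability $>1-O(\epsilon)$. Second, iterating this with the strong Markov property gives the exponential tail $\P_1(\tau_0\ge mT)<\epsilon^m$ for every integer $m\ge1$, whence $\E_1\tau_0=O(T)$ and the total length of any $k$ successive excursions away from the band is $O(kT)$ except with probability exponentially small in $k$ — which is what makes the renewal bookkeeping below go through.

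It remains to show that the monotone-coupled pair $(R_t,L_t)$ coalesce within a further $O(T)$ steps with high probability, and this is the heart of the proof. Here one tracks the gap $G_t\deq\tfrac{n}{2}(R_t-L_t)\in\{0,1,2,\dots\}$, which moves by at most $1$ per step and, by \eqref{eq-holding-prob}, is genuinely lazy once one uses — near $G_t\in\{1,2\}$ — a coupling allowing the gap to fluctuate rather than the lockstep one. At high temperature $s\mapsto\tanh(\beta s)-s$ is strictly decreasing, so $G_t$ has negative drift: it is a bounded supermartingale, with drift $\asymp -(\temp/n)\,G_t$ once both chains are near the origin and of order $-(1/n)\,G_t$ while they are still macroscopically apart. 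Combined with the holding-probability bound and the two facts above, this forces absorption of $G_t$ at $0$ within $O(T)$ steps except with probability $<\epsilon$, after $O(\log(1/\epsilon))$ renewal rounds. In the supercritical regime the same scheme applies, the only new feature being that $G_t$ may be held up near the configuration $R_t\approx\zeta$, $L_t\approx-\zeta$; but there $\tau_0$ is already exponentially large in $n$, and the commute-time estimates between $0$ and $\pm\zeta$ that drive the exponential mixing show that escaping this holdup also takes only $O(T)$ steps. Taking $c_\epsilon$ to be a suitable constant multiple of $\log(1/\epsilon)$ and collecting the geometrically small error terms finishes the proof.

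I expect the genuine obstacle to be precisely this last step: it is not enough that $R$ visits the band and $L$ visits the band, because after one chain enters it the other may have drifted away — and at low temperature the gap even acquires an \emph{outward} drift near the origin — so one is forced to analyse $G_t$ directly, using \eqref{eq-holding-prob}, the control of $\tau_0$, and, in the supercritical regime, the commute-time bounds. A smaller side issue is the parity of $n$: for even $n$ the band is the single state $\{0\}$ and meeting is automatic once both chains lie in it, whereas for odd $n$ the band is $\{\pm n^{-1}\}$ and one needs a constant-probability one-step merge from the adjacent pair — which is available because, by \eqref{eq-holding-prob}, the holding probabilities there exceed the up- and down-probabilities.
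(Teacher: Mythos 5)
Your reduction to the two extremal chains is where the argument breaks down. Sandwiching $\tS_t\le S_t$ (after relabelling) between the monotone-coupled chains $L_t,R_t$ started from $\mp1$ is fine, but it converts the lemma into the statement that the monotone-coupled pair $(R_t,L_t)$ coalesces within $c_\epsilon T$ steps, and that statement does \emph{not} follow from the sole hypothesis $\P_1(\tau_0\ge T)<\epsilon$. The drift facts you invoke to close this are temperature-specific, whereas the lemma is applied in the paper precisely in the critical window and at $\beta=1+\temp$: in the critical window the gap $G_t$ can have (slightly) outward drift, and at low temperature, when $R_t\approx\zeta$ and $L_t\approx-\zeta$, the gap has no negative drift at all. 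Your own observation that ``both $R$ and $L$ have visited the band by time $O(T)$'' indeed buys nothing, since when $R$ makes its rare excursion to the band, $L$ is typically still near $-\zeta$; and the sentence asserting that ``escaping this holdup also takes only $O(T)$ steps'' by appeal to the commute-time estimates is exactly the claim that needs a proof --- it is not supplied, and it imports Section~\ref{sec:low} material that is neither contained in the hypothesis nor needed by the paper's argument. At high temperature your sketch essentially re-derives Lemma~\ref{lem-mag-coalesce-from-0}, which again uses contraction and variance bounds beyond the stated hypothesis. So as written the heart of the proof (absorption of $G_t$ at $0$ within $O(T)$ with probability $1-\epsilon$) is missing, and in the regimes where the lemma is actually used it is not clear how to obtain it along your route.

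The paper avoids the extremal-chain coalescence problem altogether with a mirror-coupling trick that works at every temperature and uses only the hypothesis plus the symmetry $P_M(s,\cdot)\sim -P_M(-s,\cdot)$ and the uniform bounds \eqref{eq-holding-prob}. One compares $|S_t|$ with $|\tS_t|$: by time $\tau\le\tau_0$ the larger absolute value has come down to within $\frac2n$ of the smaller one (this needs only that $|\tS_t|\ge0$), and with a constant probability $b$ the absolute values then agree; if the chains are not equal they are mirrored, $S_t=-\tS_t$, and one \emph{keeps} them mirrored, so that the single hitting time $\tau_0$ of the $S$-chain automatically brings the two chains within $\frac2n$ of each other simultaneously, after which another constant-probability step merges them. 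Since $\P_\sigma(\tau_0\ge T)\le\P_1(\tau_0\ge T)<\epsilon$ for any start, each block of length roughly $T$ succeeds with probability at least $(1-\epsilon)b^2$, and $O_\epsilon(1)$ repetitions give the claim. If you want to salvage your scheme, the fix is essentially to import this idea: rather than tracking the gap of the extremal pair, force the mirrored configuration and use $\tau_0$ once.
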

\begin{proof}
Assume without loss of generality that $|\tS_0| < |S_0|$, and by symmetry, that $\sigma = |S_0| \geq 0$. Define
$$\tau \deq \min\left\{t : |S_t| \leq |\tS_t| + \mbox{$\frac{2}{n}$} \right\}~.$$
Recalling the definition of $\tau_0$, clearly we must have $\tau < \tau_0$. Next, since the holding probability of $S_t$ at any state $s$ is bounded away from $0$ and $1$ for large $n$ (by the discussion preceding the lemma), there clearly exists a constant $0 < b < 1$ such that
 $$ \P\left(S_{t+1}=\tS_{t+1} \given |S_t - \tS_t| \leq \mbox{$\frac{2}{n}$}\right) > b > 0$$
(for instance, one may choose $b = \frac{1}{10}\left(1-\tanh(\beta)\right)$ for a sufficiently large $n$). It therefore follows that $|S_{\tau+1}| = |\tS_{\tau+1}|$ with probability at least $b$.

Condition on this event. We claim that in this case, the coalescence of $(S_t)$ and $(\tS_t)$ (rather than just their absolute values) occurs at some $t
\leq \tau_0+1$ with probability at least $b$. The case $S_{\tau+1} = \tS_{\tau+1}$ is immediate, and it remains
to deal with the case $S_{\tau+1} = -\tS_{\tau+1}$. Let us couple $(S_t)$ and $(\tS_t)$ so that the property
$S_t = -\tS_t$ is maintained henceforth. Thus, at time $t=\tau_0$ we obtain $|S_t - \tS_t| = 2|S_t| \leq
\frac{2}{n}$, and with probability $b$ this yields $S_{t+1}=\tS_{t+1}$.

Clearly, our assumption on $T$ and the fact that $0 \leq\sigma \leq 1$ together give $$\P_\sigma(\tau_0 \geq T) \leq \P_1(\tau_0 \geq T) < \epsilon~.$$
Thus, with probability
 at least $(1-\epsilon) b^2$, the coalescence time of $(S_t)$ and $(\tS_t)$ is at most
 $T$. Repeating this experiment a sufficiently large number of times then completes the proof.
\end{proof}

In order to establish cutoff for the magnetization chain $(S_t)$, we will need to carefully track its moments along the Glauber dynamics. By definition (see \eqref{eq-magnet-transit}), the behavior of these moments is governed by the Hyperbolic tangent function, as demonstrated by the following useful form for the conditional expectation of $S_{t+1}$ given $S_t$ (see also \cite{LLP}*{(2.13)}).
\begin{align}\label{eq-conditional-exp-S}
\E\left[S_{t+1} \mid S_t =s\right] &= \big(s+\mbox{$\frac{2}{n}$}\big) P_M\big(s, s+\mbox{$\frac{2}{n}$}\big) + s P_M(s, s) + \big(s-\mbox{$\frac{2}{n}$}\big)P_M\big(s, s-\mbox{$\frac{2}{n}$}\big)\nonumber\\
&=(1-n^{-1})s +\varphi(s) - \psi(s)~,
\end{align}
where
\begin{align*}
  \varphi(s) = \varphi(s,\beta,n)      & \deq \frac{1}{2n}\left[ \tanh\left(\beta(s+n^{-1})\right)
                       + \tanh\left(\beta(s-n^{-1})\right) \right]~, \\
  \psi(s) = \psi(s,\beta,n) & \deq \frac{s}{2n}\left[ \tanh\left(\beta(s+n^{-1})\right)
                       - \tanh\left(\beta(s-n^{-1})\right) \right]~.
\end{align*}

\subsection{From magnetization equilibrium to full mixing }
The motivation for studying the magnetization chain is that its mixing essentially dominates the
full mixing of the Glauber dynamics. This is demonstrated by the next straightforward lemma (see also \cite{LLP}*{Lemma 3.4}), which shows that in
the special case where the starting point is the all-plus configuration, the mixing of the magnetization is \emph{precisely} equivalent to that of the entire dynamics.

\begin{lemma}\label{lem-all-plus-mag-full}
Let $(X_t)$ be an instance of the Glauber dynamics for the mean field Ising model starting from the all-plus
configuration, namely, $\sigma_0 = \mathbf{1}$, and let $S_t = S(X_t)$ be its magnetization chain. Then
\begin{equation}
  \label{eq-allplus-xt-st-equiv}
  \| \P_{\mathbf{1}}(X_t \in \cdot)- \mu_n\|_\mathrm{TV} = \| \P_{\mathbf{1}}(S_t \in \cdot)- \pi_n\|_\mathrm{TV}~,
\end{equation}
where $\pi_n$ is the stationary distribution of the magnetization chain.
\end{lemma}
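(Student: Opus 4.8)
The plan is to exploit the permutation symmetry of the all-plus starting configuration: since $\mathbf 1$ is fixed by every coordinate permutation and the Glauber update treats all coordinates alike, the law of $X_t$ under $\P_{\mathbf 1}$ is exchangeable for every $t$, and therefore the magnetization is a sufficient statistic onto which total-variation distance lumps without loss. Concretely, for $0\le k\le n$ write $\Omega_k\deq\{\sigma\in\Omega : \#\{i:\sigma(i)=1\}=k\}$, so that $|\Omega_k|=\binom{n}{k}$ and $S\equiv(2k-n)/n$ on $\Omega_k$.

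First I would prove by induction on $t$ that $\P_{\mathbf 1}(X_t\in\cdot)$ is invariant under the natural action of $\mathrm{Sym}(n)$ on $\Omega$: the base case is immediate since $\mathbf 1$ is a fixed point, and for the inductive step one uses that a single move picks a uniform coordinate $i$ and resamples $\sigma(i)$ from a law depending on $\sigma$ only through $\sum_{j\ne i}\sigma(j)$, both ingredients being equivariant under relabelling. Exchangeability then forces $\P_{\mathbf 1}(X_t=\sigma\mid S_t=s)=1/\binom{n}{k}$ for all $\sigma\in\Omega_k$ (with $s=(2k-n)/n$) whenever $\P_{\mathbf 1}(S_t=s)>0$. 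Next I would observe that $\mu_n$ is itself exchangeable, since by \eqref{eq-mu(sigma)} it depends on $\sigma$ only through $\sum_{x<y}\sigma(x)\sigma(y)=\tfrac12\big((nS(\sigma))^2-n\big)$, which is constant on each $\Omega_k$; hence $\mu_n(\sigma\mid S=s)=1/\binom{n}{k}$ on $\Omega_k$ as well, and moreover $\pi_n(s)>0$ for every admissible $s$ because $\mu_n$ charges all of $\Omega$.

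Finally I would combine these via the elementary lumping identity: for each $k$, $\sum_{\sigma\in\Omega_k}\big|\P_{\mathbf 1}(X_t=\sigma)-\mu_n(\sigma)\big| = \big|\P_{\mathbf 1}(S_t=s)-\pi_n(s)\big|$, which is clear when both probabilities on the right are positive (both terms inside the left-hand absolute value are then the same multiple of $1/\binom{n}{k}$) and trivial otherwise; summing over $k$ and halving yields \eqref{eq-allplus-xt-st-equiv}. There is no serious obstacle here — this is essentially the argument of \cite{LLP}*{Lemma 3.4} — and the only point deserving mild care is the support bookkeeping in the lumping step, which is handled by noting that $nS_t\equiv n\pmod 2$, so that the support of $\P_{\mathbf 1}(S_t\in\cdot)$ is always contained in that of $\pi_n$, namely $\{(2k-n)/n:0\le k\le n\}$.
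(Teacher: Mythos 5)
Your argument is correct and is essentially the paper's own proof: both rest on the observation that, by symmetry of the all-plus start and of $\mu_n$, the conditional laws $\P_{\mathbf 1}(X_t\in\cdot\mid S_t=s)$ and $\mu_n(\cdot\mid\Omega_s)$ are uniform on each level set, after which the total-variation distance lumps onto the magnetization. Your version merely makes the exchangeability induction and the support bookkeeping explicit, which the paper leaves implicit.
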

\begin{proof}
For any $s \in \{-1,-1+\frac{2}{n},\ldots,1-\frac{2}{n},1\}$, let $\Omega_s \deq \{\sigma \in \Omega: S(\sigma) = s\}$. Since by symmetry, both $\mu_n(\cdot \mid \Omega_s)$
and $\P_{\mathbf{1}}(X_t \in \cdot \mid S_t = s)$ are uniformly distributed over $\Omega_s$, the following holds:
\begin{align*}\|\P_{\mathbf{1}}(X_t\in\cdot)-\mu_n\|_\mathrm{TV} &= \frac{1}{2}\sum_{s} \sum_{\sigma\in\Omega_s} \left|\P_{\mathbf{1}} (X_t = \sigma) - \mu_n(\sigma)\right| \\
&= \frac{1}{2}\sum_s \sum_{\sigma\in\Omega_s} \Big|\frac{\P_\mathbf{1} (S_t = s)}{|\Omega_s|}
 - \frac{\mu_n(\Omega_s)}{|\Omega_s|}\Big|
\\
&= \|\P_\mathbf{1}(S_t\in\cdot)-\pi_n\|_\mathrm{TV}~.\qedhere
\end{align*}
\end{proof}
In the general case where the Glauber dynamics starts from an arbitrary configuration $\sigma_0$, though the above equivalence \eqref{eq-allplus-xt-st-equiv} no longer holds, the magnetization still dominates the full mixing of the dynamics in the following sense. The full coalescence of two instances of the dynamics occurs within order $n\log n$
steps once the magnetization chains have coalesced.

\begin{lemma}[\cite{LLP}*{Lemma 2.9}]\label{lem-fullmix-taumag}
 Let $\sigma,\tilde{\sigma} \in \Omega$ be such that
  $S(\sigma ) = S(\tilde{\sigma})$. For a coupling $(X_t,\tX_t)$, define the coupling time $\tau_{X,\tX} \deq \min\{t \geq 0: X_t=\tX_t\}$. Then for a sufficiently large $c_0 > 0$ there exists
  a coupling $(X_t, \tilde{X}_t)$ of the Glauber dynamics
  with initial states $X_0 = \sigma$ and $\tilde{X}_0 =
  \tilde{\sigma}$ such that
  \begin{equation*}
    \limsup_{n \rightarrow \infty}
     \P_{\sigma, \tilde{\sigma}}\left( \tau_{X,\tX} > c_0 n \log n \right)
       = 0~.
  \end{equation*}
\end{lemma}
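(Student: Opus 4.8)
The statement to be proved is Lemma \ref{lem-fullmix-taumag}: given two configurations $\sigma,\tilde\sigma$ with the same magnetization, there is a coupling $(X_t,\tilde X_t)$ of the Glauber dynamics from these two states whose coalescence time exceeds $c_0 n\log n$ with probability tending to $0$, for $c_0$ large enough. The natural approach is a monotone coupling driven by a common source of randomness, combined with the observation that once the two chains have the same magnetization, the number of sites on which they disagree is a supermartingale that contracts at a rate of order $1/n$ per step, so it hits $0$ in $O(n\log n)$ steps with high probability.

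Concretely, I would set up the \emph{identity coupling} (also called the ``grand coupling'' or ``monotone coupling''): at each step pick the same uniform site $i$ for both chains, and the same uniform real $U\in[0,1]$; update $X_t$ at $i$ to $+1$ if $U\le p^{+}\big(S(X_t)-X_t(i)/n\big)$ and to $-1$ otherwise, and likewise for $\tilde X_t$. The first key point is that this coupling preserves the equality of magnetizations: if $S(X_t)=S(\tilde X_t)=s$, then since both chains see the same site $i$ and the same threshold, and the update rule depends only on $s$ (through $p^\pm(s\mp X_t(i)/n)$, and the $\pm$ inside is determined by whether the two chains agree at $i$), one checks that the magnetizations after the step still agree — in the worst case they stay equal because whenever one chain flips a spin ``inward'' so does the other. (This is exactly the content that is used in the outline when it says the Two Coordinate Chain keeps track of the two blocks of initially-agreeing/disagreeing sites; here we only need the global magnetization to stay matched, which is the simpler statement.) The second key point is the contraction estimate: let $D_t=\#\{i: X_t(i)\ne\tilde X_t(i)\}$ be the Hamming distance. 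Since the magnetizations agree, $D_t$ is even, and a disagreement at site $i$ is resolved in one step with probability $\tfrac1n\cdot(\text{something bounded below})$ — more precisely, conditioning on site $i$ being picked, both chains use thresholds $p^+(s - X_t(i)/n)$ and $p^+(s+ X_t(i)/n)$ respectively (note the signs differ because $X_t(i)=-\tilde X_t(i)$ there), whose gap is $O(1/n)$; meanwhile picking an \emph{agreeing} site never creates a new disagreement under this coupling. Hence $\E[D_{t+1}\mid \F_t]\le (1-c/n)D_t$ for some constant $c>0$ depending on $\beta=O(1)$, so $\E D_t \le D_0 e^{-ct/n}\le n e^{-ct/n}$. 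Taking $t = c_0 n\log n$ with $c_0 > 2/c$ makes $\E D_t = o(1)$, and since $D_t$ is a nonnegative integer, $\P(D_t\ge 1)\to 0$ by Markov, which is the desired conclusion (the $\limsup$ is then $0$).

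The main obstacle — and the step deserving the most care — is verifying that the identity coupling genuinely preserves $S(X_t)=S(\tilde X_t)$ and simultaneously gives the one-sided contraction $\E[D_{t+1}\mid\F_t]\le(1-c/n)D_t$ without ever increasing $D_t$ when an agreeing site is chosen. The subtlety is that when an agreeing site $i$ is picked, the two chains have update thresholds $p^+\big(S(X_t)-X_t(i)/n\big)$ and $p^+\big(S(\tilde X_t)-\tilde X_t(i)/n\big)$ which are \emph{equal} (same magnetization, same current spin), so with a shared $U$ the site remains agreeing — good, $D_t$ is non-increasing on agreeing sites. When a disagreeing site $i$ is picked, the thresholds are $p^+(s-X_t(i)/n)$ and $p^+(s+X_t(i)/n)$, differing by $O(1/n)$; with probability $1-O(1/n)$ the shared $U$ falls outside this gap and the two chains agree at $i$ afterward (so $D$ drops by $2$, since magnetization is preserved and parity forces another agreeing site to... no — actually $D$ drops by exactly $2$ only if the net magnetization is to be preserved, so one must check that flipping one disagreement to an agreement is compatible with equal magnetization; it is, because changing $X_t(i)$ from $+1$ to $-1$ while $\tilde X_t(i)$ stays at $-1$ changes $S(X_t)$ by $-2/n$ and $S(\tilde X_t)$ by $0$ — contradiction!). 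This shows the \emph{pure} identity coupling does not literally preserve equal magnetization, so the correct coupling is slightly more delicate: one should couple the two chains so that they agree on which site to update and, on the block of disagreeing sites, update spins in matched $\pm$ pairs (as in the Two Coordinate Chain of \cite{LLP}). I would therefore invoke Lemma 2.9 of \cite{LLP} essentially verbatim for the $\beta=O(1)$ regime, noting that the proof there only uses that the derivative of $\tanh$ on $[0,\beta]$ is bounded (which holds uniformly for $\beta=O(1)$, exactly as recorded in \eqref{eq-holding-prob}), so the constant $c_0$ can be taken uniform and the $\limsup$ statement follows; the real work, already done in \cite{LLP}, is the pairing argument that keeps the magnetizations locked while driving $D_t$ down at rate $\Theta(1/n)$.
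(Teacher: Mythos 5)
Your final position --- that the pure identity coupling fails to preserve equal magnetizations, that the correct coupling must match updates on the disagreeing sites in $\pm$ pairs, and that one should therefore invoke Lemma 2.9 of \cite{LLP} (whose proof carries over uniformly for $\beta=O(1)$, using only the boundedness of the derivative of $\tanh$ as in \eqref{eq-holding-prob}) --- is exactly what the paper does, since it states this lemma as a quoted result from \cite{LLP} without giving its own proof. So the proposal is correct and takes essentially the same route; your mid-proof self-correction about the identity coupling is an accurate diagnosis rather than a gap.
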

Though Lemma \ref{lem-fullmix-taumag} holds for any temperature, it will only prove useful in the critical and
low temperature regimes. At high temperature, using more delicate arguments, we will establish full mixing
within order of $\frac{n}{\temp}$ steps once the magnetization chains have coalesced. That is, the extra steps required to achieve full mixing, once the magnetization chain cutoff had occurred, are absorbed in the cutoff window. Thus, in this regime, the entire dynamics has cutoff precisely when its magnetization chain does (with the same window).

\subsection{Contraction and one-dimensional Markov chains}
We say that a Markov chain, assuming values in $\R$, is \emph{contracting},
if the expected distance between two chains after a single step
decreases by some factor bounded away from $0$.
As we later show, the magnetization chain is contracting at high temperatures,
a fact which will have several useful consequences. One example of this is
the following straightforward lemma of \cite{LLP},
which provides a bound on the variance of the chain. Here and throughout the paper, the notation $\P_z$, $\E_z$ and $\var_z$
will denote the probability, expectation and variance respectively given that the starting state is $z$.
\begin{lemma}[\cite{LLP}*{Lemma 2.6}]\label{lem-var-bound}
  Let $(Z_t)$ be a Markov chain taking values in $\R$ and
  with transition matrix $P$. Suppose that there is some $0 < \rho < 1$ such
  that for all pairs of starting states $(z, \tilde{z})$,
  \begin{equation} \label{eq.abscontr}
    \left| \, \E_z[Z_t] - \E_{\tilde{z}}[Z_t] \, \right|
    \leq \rho^t |z - \tilde{z}| .
  \end{equation}
  Then $v_t \deq \sup_{z_0} \var_{z_0} (Z_t)$ satisfies $ v_t \leq  v_1 \min\left\{t, 1/\left(1-\rho^2\right)\right\}$.
\end{lemma}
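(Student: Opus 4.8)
The plan is to prove the estimate by a one-step variance decomposition, which produces a linear recursion in $t$ that telescopes into a geometric sum. The only ingredient beyond the hypothesis is the elementary observation that post-composition with an $L$-Lipschitz function contracts variance by a factor $L^2$: if $f:\R\to\R$ satisfies $|f(x)-f(y)|\le L|x-y|$ and $Y$ is a real random variable with finite variance, then $\var(f(Y))\le L^2\var(Y)$. This follows at once from the identity $\var(W)=\tfrac12\E[(W-W')^2]$ (with $W'$ an independent copy of $W$) applied to $W=f(Y)$, together with $|f(Y)-f(Y')|\le L|Y-Y'|$.

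The main step is to condition on the first move of the chain. Write $h_k(z)\deq\E_z[Z_k]$ for the expectation after $k$ steps started from $z$. By the Markov property, $\E[Z_t\mid Z_1=z]=h_{t-1}(z)$, and the conditional law of $Z_t$ given $Z_1=z$ coincides with the law of $Z_{t-1}$ started from $z$, so $\var(Z_t\mid Z_1=z)=\var_z(Z_{t-1})\le v_{t-1}$. The law of total variance then gives, for any starting state $z_0$ and any $t\ge 2$,
$$\var_{z_0}(Z_t)=\E_{z_0}\big[\var_{Z_1}(Z_{t-1})\big]+\var_{z_0}\big(h_{t-1}(Z_1)\big)\le v_{t-1}+\var_{z_0}\big(h_{t-1}(Z_1)\big).$$
Hypothesis \eqref{eq.abscontr} says precisely that $h_{t-1}$ is $\rho^{t-1}$-Lipschitz, so the Lipschitz--variance bound above yields $\var_{z_0}(h_{t-1}(Z_1))\le \rho^{2(t-1)}\var_{z_0}(Z_1)\le \rho^{2(t-1)}v_1$. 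Taking the supremum over $z_0$ gives the recursion $v_t\le v_{t-1}+\rho^{2(t-1)}v_1$ for all $t\ge 2$.

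Finally I would unwind the recursion: starting from the trivial bound $v_1\le v_1$, induction gives $v_t\le v_1\sum_{j=0}^{t-1}\rho^{2j}$ for every $t\ge 1$. Bounding the partial sum both by $t$ (each term is at most $1$) and by $\sum_{j\ge 0}\rho^{2j}=1/(1-\rho^2)$ produces the claimed $v_t\le v_1\min\{t,1/(1-\rho^2)\}$. I do not expect a genuine obstacle here; the only caveat is the routine bookkeeping in the decomposition --- the step from time $0$ to time $1$ injects at most $v_1$ of fresh variance, which is then damped by the factor $\rho^{2(t-1)}$ coming from the contraction over the remaining $t-1$ steps --- and, if one wants to be fully careful, checking that the variances involved are finite, which is automatic when the state space is finite (as it is in all our applications). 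An equivalent route is to consider the Doob martingale $\E[Z_t\mid\F_s]$, $0\le s\le t$, use orthogonality of its increments to write $\var_{z_0}(Z_t)=\sum_{s=1}^{t}\E\big[(\E[Z_t\mid\F_s]-\E[Z_t\mid\F_{s-1}])^2\big]$, bound the $s$-th term by $\rho^{2(t-s)}v_1$ via the $\rho^{t-s}$-Lipschitz property of $h_{t-s}$, and sum the resulting geometric series.
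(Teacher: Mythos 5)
Your proof is correct, and it is essentially the standard argument behind this lemma: the paper itself gives no proof (it quotes the statement from \cite{LLP}*{Lemma 2.6}), and the original proof proceeds exactly via your one-step law-of-total-variance decomposition, the observation that $z\mapsto\E_z[Z_{t-1}]$ is $\rho^{t-1}$-Lipschitz, and the resulting geometric recursion $v_t\le v_{t-1}+\rho^{2(t-1)}v_1$. As a small bonus, your recursion also yields immediately the extension to $\rho\ge 1$, namely $v_t\le v_1\rho^{2t}\min\{t,1/(\rho^2-1)\}$, which the paper invokes in the remark following the lemma.
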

\begin{remark*}
By following the original proof of the above lemma, one can readily extend it to the case $\rho \geq 1$ and get the following bound:
\begin{equation}\label{eq-var-bound-rho-geq-1}
    v_t \leq  v_1 \cdot \rho^{2t} \min\left\{t, 1/\left(\rho^2-1\right)\right\}~.
\end{equation}
This bound will prove to be effective for reasonably small values of $t$ in the critical window, where although the magnetization chain is not contracting, $\rho$ is only slightly larger than $1$.
\end{remark*}

Another useful property of the magnetization chain in the high temperature regime is its drift towards $0$. As
we later show, in this regime, for any $s > 0$ we have $\E \left[S_{t+1} | S_t = s\right] < s$, and with
probability bounded below by a constant we have $S_{t+1} < S_t$. We thus refer to the following lemma of
\cite{LPW}:
\begin{lemma}[\cite{LPW}*{Chapter 18}]\label{lem-supermatingale-positive}
  Let $(W_t)_{t \geq 0}$ be a non-negative
  supermartingale and $\tau$ be a stopping time such
  \begin{enumeratei}
    \item $W_0 = k$,
    \item $W_{t+1} - W_t \leq B$,
    \item $\var(W_{t+1} \mid \F_t) > \sigma^2 > 0$
      on the event $\tau > t$ .
  \end{enumeratei}
  If $u > 4 B^2/(3 \sigma^2)$, then $\P_k( \tau > u) \leq \frac{ 4 k }{\sigma \sqrt{u}}$.
\end{lemma}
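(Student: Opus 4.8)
The plan is to treat the statement as a quantitative gambler's–ruin estimate: a non‑negative supermartingale whose one–step conditional variance stays bounded below by $\sigma^2$ until time $\tau$ cannot avoid $\tau$ for much longer than $(k/\sigma)^2$ steps, and in fact $\P_k(\tau>u)$ decays like $k/(\sigma\sqrt u)$. The route I would take is to bound a truncated version of $\E_k[u\wedge\tau]$ by a Foster–Lyapunov argument with a carefully chosen \emph{concave} test function — concavity is what converts the variance lower bound (iii) into a strictly negative drift for the test function — and then to convert this moment bound into the desired tail bound via Markov's and Doob's inequalities.

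First I would introduce a truncation, since a supermartingale may escape to $+\infty$. Fix a level $L>k$ (to be optimized at the end), set $\tau_L\deq\inf\{t:W_t\ge L\}$ and $\tau'\deq\tau\wedge\tau_L$, and note that by (i), (ii) and non‑negativity the stopped process $W_{t\wedge\tau'}$ stays in $[0,L+B)$. On this range define the quadratic
\[
h(w)\deq w-\frac{w^2}{2(L+B)},
\]
which is non‑negative, non‑decreasing, and has $h''\equiv-\kappa$ with $\kappa\deq 1/(L+B)$. Since $h$ is quadratic its second–order Taylor expansion is exact:
\[
h(W_{t+1})=h(W_t)+h'(W_t)\,(W_{t+1}-W_t)-\tfrac{\kappa}{2}\,(W_{t+1}-W_t)^2 .
\]
Taking $\E[\,\cdot\mid\F_t]$ on $\{\tau'>t\}$ and using $h'(W_t)\ge 0$ (as $W_t<L$), the supermartingale property $\E[W_{t+1}-W_t\mid\F_t]\le 0$, and $\E[(W_{t+1}-W_t)^2\mid\F_t]\ge\var(W_{t+1}\mid\F_t)>\sigma^2$ from (iii), one gets $\E[h(W_{t+1})\mid\F_t]\le h(W_t)-\tfrac12\kappa\sigma^2$ on $\{\tau'>t\}$; on $\{\tau'\le t\}$ the stopped process is constant. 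Hence $h(W_{t\wedge\tau'})+\tfrac12\kappa\sigma^2\,(t\wedge\tau')$ is a non‑negative supermartingale.

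Next I would apply optional stopping at the deterministic time $u$ (no integrability issues since $u$ is fixed) and use $h(W_0)=h(k)\le k$ together with $h\ge0$ to obtain $\tfrac12\kappa\sigma^2\,\E_k[u\wedge\tau']\le k$, i.e. $\E_k[u\wedge\tau']\le 2k(L+B)/\sigma^2$. Markov's inequality then gives $\P_k(\tau'>u)\le \E_k[u\wedge\tau']/u\le 2k(L+B)/(\sigma^2u)$, while Doob's maximal inequality for the non‑negative supermartingale $(W_t)$ gives $\P_k(\tau_L\le u)\le k/L$. Combining,
\[
\P_k(\tau>u)\ \le\ \P_k(\tau'>u)+\P_k(\tau_L\le u)\ \le\ \frac{2k(L+B)}{\sigma^2 u}+\frac{k}{L}.
\]
Choosing $L$ of order $\sigma\sqrt u$ balances the two principal terms, and the residual term coming from $B$ is exactly what the hypothesis $u>4B^2/(3\sigma^2)$ is designed to absorb; optimizing the constants yields the stated bound $4k/(\sigma\sqrt u)$.

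The mechanism here is robust, so the substantive difficulty is entirely in the constants. The naive optimization $L=\sigma\sqrt{u/2}$ produces a leading constant $2\sqrt2\approx 2.83$ but leaves an error of order $B/(\sigma\sqrt u)$; to push the total below $4$ under the assumption $u>4B^2/(3\sigma^2)$ one must use that assumption tightly and likely take $L$ slightly away from the naive optimum (or refine the Taylor step by exploiting $W_{t+1}\ge0$). That bookkeeping, rather than any structural point, is where I expect the real work to lie; the rest — the concave Lyapunov function, the truncation at level $L$, and the two inequalities — is standard once set up as above.
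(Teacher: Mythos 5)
The paper never proves this lemma; it is quoted verbatim from \cite{LPW} (it is the supermartingale hitting-time estimate proved there), so there is no in-paper argument to match your proposal against and it must stand on its own. Structurally it does: truncating at a level $L$ so that the stopped process lives in $[0,L+B)$, using the \emph{exact} second-order expansion of a concave quadratic $h$ to convert hypothesis (iii) into a deterministic per-step decrease of $\E[h(W_{t\wedge\tau'})]$, and then combining optional stopping at the deterministic time $u$, Markov's inequality, and Doob's maximal inequality for the non-negative supermartingale $(W_t)$ is a correct and self-contained route to a bound of the form $Ck/(\sigma\sqrt{u})$. Each step you cite is legitimate: the Taylor identity is exact for a quadratic, $h'\geq 0$ at the pre-stopping values $W_t<L$, $h\geq 0$ on the range of the stopped chain, $\E[(W_{t+1}-W_t)^2\mid\F_t]\geq\var(W_{t+1}\mid\F_t)>\sigma^2$ on $\{\tau'>t\}$, and optional stopping at a bounded deterministic time raises no integrability issues.

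The one genuine issue is the constant, and your diagnosis of the fix is off. With your $h(w)=w-w^2/(2(L+B))$ the per-step decrease is only $\sigma^2/(2(L+B))$, so the final bound is $\frac{2k(L+B)}{\sigma^2 u}+\frac{k}{L}$; since the $B$-term is additive, this is minimized exactly at $L=\sigma\sqrt{u/2}$, giving $\frac{2\sqrt{2}\,k}{\sigma\sqrt{u}}+\frac{2kB}{\sigma^2 u}$, and as the hypothesis only guarantees $B<\frac{\sqrt{3}}{2}\sigma\sqrt{u}$, the best available constant is $2\sqrt{2}+\sqrt{3}\approx 4.56$. In particular ``taking $L$ slightly away from the naive optimum'' cannot help: any other $L$ only increases the bound. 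The repair lies in the curvature, not in $L$: take $h(w)=w-\frac{w^2}{2L}$, which still satisfies $h\geq 0$ on $[0,L+B]$ provided $B\leq L$, and choose $L=\max\{B,\sigma\sqrt{u/2}\}$. If $B\leq\sigma\sqrt{u/2}$ this yields $\P_k(\tau>u)\leq \frac{2\sqrt{2}\,k}{\sigma\sqrt{u}}$; if $\sigma\sqrt{u/2}<B<\frac{\sqrt{3}}{2}\sigma\sqrt{u}$, taking $L=B$ yields $\frac{2kB}{\sigma^2u}+\frac{k}{B}<(\sqrt{3}+\sqrt{2})\frac{k}{\sigma\sqrt{u}}$; both are below $\frac{4k}{\sigma\sqrt{u}}$, and this is the only place the hypothesis $u>4B^2/(3\sigma^2)$ is needed. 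So your plan does close, but only after this modification; as written it proves the lemma with $4$ replaced by roughly $4.6$ --- which, to be fair, would suffice for every application in this paper, since only the order $k/(\sigma\sqrt{u})$ is ever used.
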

This lemma, together with the above mentioned properties of $(S_t)$, yields the following immediate corollary:
\begin{corollary}[\cite{LLP}*{Lemma 2.5}]\label{cor-St-hitting-0}
  Let $\beta \le 1$, and suppose that $n$ is even.
  There exists a constant $c$ such that, for all $s$ and for all
  $u,t \ge 0$,
  \begin{equation} \label{eq.martrp}
    \P( \, |S_{u}| > 0, \ldots, |S_{u+t}| > 0 \mid S_u=s )
      \leq \frac{c n |s|}{\sqrt{t}}~.
  \end{equation}
\end{corollary}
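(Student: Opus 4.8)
The plan is to deduce Corollary~\ref{cor-St-hitting-0} from Lemma~\ref{lem-supermatingale-positive} by verifying that, restricted to the time the chain stays strictly on one side of $0$, the magnetization (suitably rescaled) is a nonnegative supermartingale with bounded increments and bounded-below conditional variance. First I would reduce to the case $s>0$: by the symmetry of $(S_t)$ noted after \eqref{eq-magnet-transit}, the probability in \eqref{eq.martrp} is unchanged under $s\mapsto -s$, and if $s=0$ the left-hand side is $0$ (since $n$ is even, $|S_u|>0$ fails at $t=0$), so it suffices to bound $\P(|S_{u+j}|>0\ \forall j\le t \mid S_u=s)$ for $s>0$. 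On the event that the chain has not yet hit $0$, $S_{u+j}$ remains strictly positive, so $|S_{u+j}|=S_{u+j}$ and we may work with $S$ itself on this event.

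Next I would set $\tau\deq\min\{t: S_{u+t}\le 0\}$ and $W_j\deq S_{u+j}$ for $j\le\tau$ (and frozen afterwards, or equivalently run the argument only up to $\tau$). The three hypotheses of Lemma~\ref{lem-supermatingale-positive} are checked as follows. (i) $W_0=s$ gives $k=|s|$. (ii) The increments satisfy $|W_{j+1}-W_j|\le 2/n\deq B$ directly from the transition rule \eqref{eq-magnet-transit}. (iii) For the supermartingale property, invoke \eqref{eq-conditional-exp-S}: $\E[S_{t+1}\mid S_t=s]=(1-n^{-1})s+\varphi(s)-\psi(s)$, and since $\tanh$ is concave on $[0,\infty)$ one has $\varphi(s)\le \frac1n\tanh(\beta s)\le \frac{\beta s}{n}\le \frac{s}{n}$ for $\beta\le 1$ and $s>0$, while $\psi(s)\ge 0$; hence $\E[S_{t+1}\mid S_t=s]\le s$, so $(W_j)$ is a nonnegative supermartingale. (iv) For the variance lower bound, note from \eqref{eq-holding-prob} that for $s\in(0,1)$ bounded away from $\pm1$ the up- and down-probabilities are each bounded below by a positive constant times $1/4$, so $\var(S_{t+1}\mid S_t=s)\ge c'/n^2$; one should check the edge case $s$ near $1$ separately (there the chain is pushed inward and the relevant steps still have probability bounded below), giving $\var(W_{j+1}\mid\F_j)>\sigma^2\deq c'/n^2$ uniformly on $\{\tau>j\}$.

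With $B=2/n$ and $\sigma^2=c'/n^2$ we have $B^2/\sigma^2=O(1)$, so the hypothesis $u>4B^2/(3\sigma^2)$ of Lemma~\ref{lem-supermatingale-positive} holds once $t$ exceeds an absolute constant; for $t$ below that constant the bound \eqref{eq.martrp} is trivial after enlarging $c$, since $cn|s|/\sqrt t \ge c n \cdot (1/n)/\sqrt{t}= c/\sqrt t$ can be made $\ge 1$. For $t$ large enough, Lemma~\ref{lem-supermatingale-positive} yields
\[
  \P(\tau>t\mid S_u=s)\le \frac{4|s|}{\sigma\sqrt t}=\frac{4|s|}{\sqrt{c'/n^2}\,\sqrt t}=\frac{4n|s|}{\sqrt{c'}\,\sqrt t},
\]
which is exactly \eqref{eq.martrp} with $c=4/\sqrt{c'}$.

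The only real subtlety is hypothesis (iii) of the variance bound near the boundary $|s|=1$, where one of the two displacement probabilities in \eqref{eq-magnet-transit} degenerates; this is handled by observing that at $s$ near $1$ the holding-plus-downward mass is bounded below (so the chain does a genuine step with constant probability) and, more carefully, that the conditional variance picks up a contribution of order $1/n^2$ from the $\{s,s-2/n\}$ transitions alone, whose probabilities are both bounded below by positive constants when $1-s$ is, say, at least $1/n$ — and if $s=1$ exactly, the first step is forced to $s-2/n$, after which we are in the interior. I expect this boundary bookkeeping, rather than any conceptual difficulty, to be the main thing to get right; everything else is a direct application of the cited supermartingale lemma together with the elementary estimates on $\tanh$ already recorded in \eqref{eq-holding-prob}.
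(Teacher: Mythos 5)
Your proof is correct and follows essentially the route the paper intends: the corollary is quoted from \cite{LLP}*{Lemma 2.5} and presented as an immediate consequence of Lemma \ref{lem-supermatingale-positive} applied to the magnetization chain (stopped at $0$), using the drift bound from \eqref{eq-conditional-exp-S} with concavity of $\tanh$ and the laziness/step-probability estimates \eqref{eq-holding-prob}, which is exactly your argument with $B=2/n$, $\sigma^2=c'/n^2$, $k=|s|$. One trivial slip: at $s=1$ the chain is not forced to move down (it holds with probability $p^{+}(1-n^{-1})$), but your variance bound there still holds since the holding and downward probabilities are each bounded below for $\beta\le 1$, so nothing breaks.
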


Finally, our analysis of the spectral gap of the magnetization chain will require several results concerning birth-and-death chains from \cite{DLP}. In what follows and throughout the paper, the relaxation-time of a chain, $\trel$, is defined to be $\gap^{-1}$, where $\gap$ denotes its spectral-gap. We say that a chain  is $b$-\emph{lazy} if all its holding probabilities are at least $b$, or simply \emph{lazy} for the useful case of $b=\frac{1}{2}$. Finally, given an ergodic birth-and-death chain on $\magspace=\{0,1,\ldots,n\}$ with stationary distribution $\pi$, the \emph{quantile state} $Q(\alpha)$, for $0 < \alpha < 1$, is defined to be the smallest $i\in \magspace$ such
that $\pi(\{0,\ldots,i\}) \geq \alpha$.

\begin{lemma}[\cite{DLP}*{Lemma 2.9}]\label{lem-hitting-ratio}
Let $X(t)$ be a lazy irreducible birth-and-death chain on $\{0,1,\ldots,n\}$, and suppose that for some $0 < \epsilon <
\frac{1}{16}$ we have $\trel < \epsilon^4 \cdot \E_0 \tau_{Q(1-\epsilon)}$. Then for any fixed $\epsilon \leq
\alpha < \beta \leq 1-\epsilon$:
\begin{equation}\label{eq-commute-time}
  \E_{Q(\alpha)} \tau_{Q(\beta)} \leq \frac{3}{2\epsilon} \sqrt{ \trel \cdot \E_0 \tau_{Q(\frac{1}{2})}}~.
\end{equation}
\end{lemma}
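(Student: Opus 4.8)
The plan is to combine two standard facts about birth-and-death chains with the hypothesis $\trel < \epsilon^4 \cdot \E_0 \tau_{Q(1-\epsilon)}$, which says the chain is "slow" relative to its relaxation time. The first ingredient is the classical identity expressing hitting times in a reversible birth-and-death chain via effective resistances (conductances): for $i<j$, $\E_i \tau_j = \sum_{k=i}^{j-1} \frac{1}{c(k,k+1)} \pi(\{0,\ldots,k\})$, and more generally the commute time $\E_i\tau_j + \E_j\tau_i$ has a clean resistance form. The second ingredient is the Aldous--Fill / spectral bound relating hitting times of quantile states to the relaxation time: for a lazy chain, $\E_{Q(\alpha)}\tau_{Q(\beta)} \asymp \trel$ whenever $\alpha,\beta$ are separated from $0,1$ AND the chain mixes fast, but in general one has the two-sided control $\trel \lesssim \max_{i} \E_{\pi}\tau_i$ and the quantile hitting times are comparable to $\sum \pi(\{0,\ldots,k\})/c(k,k+1)$ restricted to the relevant range.

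Concretely, I would first write $\E_{Q(\alpha)}\tau_{Q(\beta)} = \sum_{k=Q(\alpha)}^{Q(\beta)-1} \frac{\pi(\{0,\ldots,k\})}{c(k,k+1)}$, and similarly $\E_0\tau_{Q(1/2)} = \sum_{k=0}^{Q(1/2)-1}\frac{\pi(\{0,\ldots,k\})}{c(k,k+1)}$ and $\E_0\tau_{Q(1-\epsilon)}$ as the analogous sum up to $Q(1-\epsilon)-1$. Since on the range $Q(\alpha)\le k < Q(\beta)$ we have $\epsilon \le \pi(\{0,\ldots,k\}) \le 1$ (using $\alpha\ge\epsilon$ and $\beta\le 1-\epsilon$ together with monotonicity of the CDF), each term in the first sum is at most $1/\epsilon$ times the corresponding term in a sum over a subrange of $[0,Q(1-\epsilon))$, after pairing terms appropriately. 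The idea is then a Cauchy--Schwarz type argument: one bounds the sum $\sum_{k=Q(\alpha)}^{Q(\beta)-1}\frac{\pi(\{0,\ldots,k\})}{c(k,k+1)}$ by splitting off a factor that is controlled by $\trel$ (via the spectral estimate $\trel \gtrsim$ a weighted resistance sum on any fixed quantile block) and a factor controlled by $\E_0\tau_{Q(1/2)}$, yielding the geometric mean $\sqrt{\trel \cdot \E_0\tau_{Q(1/2)}}$ with the constant $\frac{3}{2\epsilon}$ absorbing the quantile-range losses.

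The step I expect to be the main obstacle is establishing the lower bound $\trel \gtrsim \epsilon^? \cdot (\text{block resistance})$ in the right normalization — i.e., showing that a single quantile block of the resistance profile is not too large compared to $\trel$, so that it can play the role of the "small" factor in the geometric mean. This is where the hypothesis $\trel < \epsilon^4 \E_0\tau_{Q(1-\epsilon)}$ must be used: it guarantees that $\E_0\tau_{Q(1-\epsilon)}$ (hence $\E_0\tau_{Q(1/2)}$, which is comparable up to a factor depending on $\epsilon$) dominates $\trel$, so that the product $\trel \cdot \E_0\tau_{Q(1/2)}$ is genuinely larger than $\trel^2$ and the bound $\E_{Q(\alpha)}\tau_{Q(\beta)} \le \frac{3}{2\epsilon}\sqrt{\trel \cdot \E_0\tau_{Q(1/2)}}$ is meaningful and attainable. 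I would handle the bookkeeping of constants by first proving the bound with an unspecified $C(\epsilon)$ of order $1/\epsilon$ and then optimizing; the precise constant $3/2$ is not essential for the applications in this paper, but tracking the $\epsilon$-dependence carefully is, since $\epsilon$ will be taken small (though fixed) in the spectral-gap arguments of Sections \ref{sec:high} and \ref{sec:low}.
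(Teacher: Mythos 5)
You have not given a proof here so much as named the place where a proof would have to live: the entire content of the lemma sits inside the step you defer (``a Cauchy--Schwarz type argument'' splitting off a factor controlled by $\trel$), and the natural implementations of your outline do not produce the stated bound. Your starting identity $\E_{Q(\alpha)}\tau_{Q(\beta)}=\sum_{k=Q(\alpha)}^{Q(\beta)-1}\pi(\{0,\ldots,k\})/\bigl(\pi(k)P(k,k+1)\bigr)$ is correct, but note that every term of this sum is literally a term of the analogous sum for $\E_0\tau_{Q(1-\epsilon)}$, so any term-by-term ``pairing'' comparison can never give more than the trivial bound $\E_{Q(\alpha)}\tau_{Q(\beta)}\leq \E_0\tau_{Q(1-\epsilon)}$; under the hypothesis, however, the claimed right-hand side satisfies $\frac{3}{2\epsilon}\sqrt{\trel\,\E_0\tau_{Q(\frac12)}}\leq\frac{3\epsilon}{2}\,\E_0\tau_{Q(1-\epsilon)}$, i.e.\ the target is \emph{smaller} than the trivial bound by a factor of order $\epsilon$, so a comparison that loses a factor $1/\epsilon$ goes in the wrong direction. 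The genuine per-term spectral input you could use is a Cheeger/Dirichlet-form bound (test $f=\one_{\{0,\ldots,k\}}$): for $Q(\epsilon)\leq k<Q(1-\epsilon)$ both tails have mass at least $\epsilon$, whence $\pi(\{0,\ldots,k\})/\bigl(\pi(k)P(k,k+1)\bigr)\leq\trel/\epsilon$. But this only yields $\E_{Q(\alpha)}\tau_{Q(\beta)}\leq\frac{\trel}{\epsilon}\bigl(Q(\beta)-Q(\alpha)\bigr)$, and nothing in your sketch controls the number of intermediate states by $\sqrt{\E_0\tau_{Q(\frac12)}/\trel}$; the Cauchy--Schwarz weightings actually available from these ingredients give either circular inequalities or a bound of order $\trel^2/\epsilon^3$, which the hypothesis (which is linear in $\trel$ and involves $\E_0\tau_{Q(1-\epsilon)}$, not $\E_0\tau_{Q(\frac12)}$) does not convert into $\frac{3}{2\epsilon}\sqrt{\trel\,\E_0\tau_{Q(\frac12)}}$.

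Two further problems. First, you use the assumption $\trel<\epsilon^4\E_0\tau_{Q(1-\epsilon)}$ only to argue that the asserted bound is ``meaningful and attainable'' (larger than $\trel$); that is not a proof step --- the hypothesis must enter the estimate quantitatively. Second, your parenthetical claim that $\E_0\tau_{Q(\frac12)}$ and $\E_0\tau_{Q(1-\epsilon)}$ are comparable up to an $\epsilon$-dependent factor is not an a priori fact: it is essentially the case $\alpha=\frac12$, $\beta=1-\epsilon$ of the lemma itself (combined with the hypothesis), so invoking it is circular. Finally, be aware that this paper does not prove the statement at all --- it is quoted from \cite{DLP} --- and the argument there does not go through the resistance representation you propose; its engine is concentration of birth-and-death hitting times (hitting times of increasing states are sums of independent passage times, with variance controlled by $\trel$ times the mean up to $\epsilon$-dependent losses), with the assumption $\trel<\epsilon^4\E_0\tau_{Q(1-\epsilon)}$ used explicitly in the resulting Chebyshev-type estimates. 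Some such concentration mechanism, or a substitute for it, is exactly what is missing from your outline.
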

\begin{lemma}[\cite{DLP}*{Lemma 2.3}]\label{lem-tv-hitting-bound}
For any fixed $0 < \epsilon < 1$ and lazy irreducible birth-and-death chain $X$, the following holds for any
$t$:
\begin{align}
  \label{eq-tv-from-0-hitting-bound}
  \| P^t(0,\cdot) - \pi \|_{\mathrm{TV}} &\leq \P_0(\tau_{Q(1-\epsilon)} > t) + \epsilon~,
\end{align}
and for all $k \in \Omega$,
\begin{align} \label{eq-tv-from-k-hitting-bound}
\| P^t(k,\cdot) - \pi \|_{\mathrm{TV}} &\leq \P_k(\max\{\tau_{Q(\epsilon)},\tau_{Q(1-\epsilon)}\} > t) +
2\epsilon~.
\end{align}
\end{lemma}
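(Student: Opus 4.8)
The plan is to prove both bounds by monotone couplings, exploiting that a lazy birth-and-death chain is stochastically monotone: for a lazy chain the up-probability at a state plus the down-probability at the next state is at most $1$, which is exactly the condition allowing a monotone update rule $\phi\colon\{0,\dots,n\}\times[0,1]\to\{0,\dots,n\}$. Driving every copy of the chain with one sequence of i.i.d.\ uniforms then yields a grand coupling in which trajectories are ordered as soon as their starting states are, once two trajectories agree they agree forever, and --- since a single step moves by at most one --- two coupled trajectories that ever lie strictly on opposite sides of each other must have agreed at an intermediate time.

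For \eqref{eq-tv-from-0-hitting-bound} I would couple the chain $X$ from $0$ with a stationary chain $Y$ (so $Y_0\sim\pi$), giving $X_t\le Y_t$ for all $t$ and a coalescence time $\tau$ with $\|P^t(0,\cdot)-\pi\|_{\mathrm{TV}}\le\P(\tau>t)$. Writing $q=Q(1-\epsilon)$: on $\{\tau>t\}$ one has $X_u<Y_u$ for every $u\le t$, so if in addition $X$ has reached $q$ by time $t$ then at the first such time $m=\tau_q$ we get $q=X_m<Y_m$, i.e.\ $Y_m\ge q+1$. Hence
\[
\P(\tau>t)\ \le\ \P_0(\tau_q>t)\ +\ \P\bigl(Y_{\tau_q}\ge q+1,\ \tau_q\le t\bigr),
\]
and the proof reduces to bounding the last term by $\pi(\{q+1,\dots,n\})\le\epsilon$. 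For \eqref{eq-tv-from-k-hitting-bound} I would run $X$ from $k$ against a stationary chain $Y$ in the same coupling; $X$ and $Y$ are then ordered, the direction being determined by the sign of $k-Y_0$. Put $q_-=Q(\epsilon)$ and $q_+=Q(1-\epsilon)$, so that $\pi(\{0,\dots,q_--1\})<\epsilon$ and $\pi(\{q_++1,\dots,n\})\le\epsilon$. If $X$ has visited both $q_-$ and $q_+$ by time $t$ it has swept across all of $[q_-,q_+]$; provided $Y_0\in[q_-,q_+]$, which fails with probability at most $2\epsilon$, the same argument as above --- applied to $q_+$ on the event $X\le Y$ and to $q_-$ on the event $X\ge Y$ --- shows that $X$ and $Y$ have coalesced except on an event of probability at most $2\epsilon$, giving the stated bound $\P_k(\max\{\tau_{q_-},\tau_{q_+}\}>t)+2\epsilon$.

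The hard part is the reduction step: showing that the stationary companion chain occupies its upper $\epsilon$-tail at the random time $\tau_q$ with probability at most $\epsilon$ (and the analogue for \eqref{eq-tv-from-k-hitting-bound}). One cannot simply appeal to stationarity, since $Y_{\tau_q}$ need not be $\pi$-distributed and $\tau_q$ is not independent of $Y$. The resolution is to use the monotonicity of the update rule --- for instance, realizing the coupling so that the chain from $0$ evolves autonomously and $Y$ follows it monotonically while retaining its marginal, and observing that the chain started at $q$, run under the same randomness until the chain from $0$ first hits $q$, is squeezed from below by the latter and hence cannot have dropped below $q$; this pins down enough about the law of $Y$ at time $\tau_q$ to close the estimate. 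Granting this, the remaining steps are routine manipulations with the grand coupling and the definition of the quantile states $Q(\alpha)$.
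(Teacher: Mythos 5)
The paper itself offers no proof of this lemma (it is quoted from \cite{DLP}*{Lemma 2.3}), so your argument has to stand on its own, and it has a genuine gap exactly at the place you label ``the hard part.'' Your reduction is fine: under a grand monotone coupling with a stationary chain $Y$, non-coalescence by time $t$ forces $Y_{\tau_q}>q$ on $\{\tau_q\le t\}$, where $q=Q(1-\epsilon)$ and $\tau_q$ is the hitting time for the chain $X$ started at $0$. But the proposed resolution does not close this estimate. The observation that the chain started at $q$, driven by the same uniforms, dominates $X$ and hence sits at some state $\ge q$ at time $\tau_q$ carries only lower-bound information and says nothing that would force $Y_{\tau_q}\le q$. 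Worse, under the grand coupling one has $Y_s\ge X_s$ for all $s$, so $Y_{\tau_q}\ge q$ holds automatically, and the event $\{Y_{\tau_q}>q\}$ that you must bound by $\epsilon$ is precisely the non-coalescence event you set out to bound: the suggested squeeze is circular. Nor is the underlying hope true: under any order-preserving coupling, an upper chain started in $[0,q]$ (or started from $\pi$) need not have met the lower chain by the time the latter first reaches $q$ --- it can stay strictly ahead throughout --- so no monotone coupling by itself yields $\P\bigl(Y_{\tau_q}>q,\ \tau_q\le t\bigr)\le\epsilon$, and you correctly note one cannot invoke stationarity at the random time $\tau_q$ because $\tau_q$ is not independent of $Y$ under the coupling. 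The same unestablished step is then reused verbatim for \eqref{eq-tv-from-k-hitting-bound} (``the same argument as above shows that $X$ and $Y$ have coalesced\dots''), so both halves of the lemma rest on it.

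What is missing is a genuinely new ingredient, not a refinement of the grand coupling. One workable route for \eqref{eq-tv-from-0-hitting-bound}: laziness makes the tridiagonal kernel totally positive of order two, whence $P^t(0,\cdot)$ lies below $\pi$ in the likelihood-ratio order; consequently the set achieving the total-variation distance between $P^t(0,\cdot)$ and $\pi$ can be taken of the form $[j,n]$. If $j>q$ this set has $\pi$-mass at most $\epsilon$; if $j\le q$, the strong Markov property at $\tau_q$ together with $\delta_q\succeq\pi(\cdot\mid[0,q])$ and monotonicity gives $\P_0(X_t\ge j)\ \ge\ \P_0(\tau_q\le t)\bigl(\pi([j,n])-\epsilon\bigr)$, which yields the bound; the general-start inequality \eqref{eq-tv-from-k-hitting-bound} then requires a further two-sided elaboration of this argument using both quantiles. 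Some device of this kind (a likelihood-ratio/upset reduction, a strong stationary time, or a coupling construction that genuinely controls the law of the stationary companion at the random hitting time) is indispensable; ``this pins down enough about the law of $Y$ at time $\tau_q$'' is precisely where your proposal stops being a proof.
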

\begin{remark*}
As argued in \cite{DLP} (see Theorem 3.1 and its proof), the above two lemmas also hold for the case where the birth-and-death chain is not lazy but rather $b$-lazy for some constant $b > 0$. The formulation for this more general case incurs a cost of a slightly different constant in \eqref{eq-commute-time}, and replacing $t$ with $t/C$ (for some constant $C$) in
\eqref{eq-tv-from-0-hitting-bound} and \eqref{eq-tv-from-k-hitting-bound}. As we already established (recall
\eqref{eq-holding-prob}),
the magnetization chain is indeed $b$-lazy for any constant $b < \frac{1}{2}$ and a sufficiently large $n$. \end{remark*}

\subsection{Monotone coupling}\label{sec:monotone coupling} A useful tool throughout our arguments is the \emph{monotone coupling} of two instances of the Glauber dynamics $(X_t)$ and $(\tX_t)$, which maintains a coordinate-wise inequality between the corresponding configurations. That is, given two configurations $\sigma \geq \tilde{\sigma}$ (i.e., $\sigma(i)\geq \tilde{\sigma}(i)$ for all $i$), it is possible to generate the next two states $\sigma'$ and $\tilde{\sigma}'$ by updating the same site in both, in a manner that ensures that $\sigma' \geq \tilde{\sigma}'$. More precisely, we draw a random variable $I$ uniformly over
$\{1, 2, \ldots, n\}$ and independently draw another random variable $U$ uniformly over $[0,1]$. To generate
$\sigma'$ from $\sigma$, we update site $I$ to $+1$ if $U \leq p^{+}\left(S(\sigma)-\frac{\sigma(I)}{n}\right)$,
 otherwise $\sigma'(I)=-1$. We perform an analogous process in order to generate $\tilde{\sigma}'$ from
$\tilde{\sigma}$, using the same $I$ and $U$ as before. The monotonicity of the function $p^{+}$ guarantees that $\sigma' \geq \tilde{\sigma}'$, and by repeating this process, we obtain a coupling of the two instances of the
 Glauber dynamics that always maintains monotonicity.

Clearly, the above coupling induces a monotone coupling for the two corresponding magnetization chains.
We say that a birth-and-death chain with a transition kernel $P$ and a state-space $\magspace=\{0,1,\ldots,n\}$
is \emph{monotone} if $P(i,i+1) + P(i+1,i) \leq 1$ for every $i < n$. It is easy to verify that this condition
is equivalent to the existence of a monotone coupling, and that for such a chain, if $f:\magspace\to\R$ is a monotone increasing (decreasing) function then so is $P f$ (see, e.g., \cite{DLP}*{Lemma 4.1}).

%In addition, at certain times we will also require a monotone coupling for the absolute magnetization chain, $|S_t|$.
%The symmetry of $S_t$ enables us to use the usual monotone coupling (as described above) for this purpose, everywhere except when the two chains $S_t$ and $\tS_t$ assume the values $0$ and $\pm\frac{2}{n}$ for even $n$. To verify that a monotone coupling exists also for this final special case, assume that $S_0 = \frac{2}{n}$ and $\tS_0= 0$; we need to construct a coupling such that $|S_1| \geq |\tS_1|$. Indeed, this is possible, as guaranteed by the following fact:
%\begin{equation*}
%P_M(0, \mbox{$\frac{2}{n}$}) + P_M (0, \mbox{$-\frac{2}{n}$}) + P_M (\mbox{$\frac{2}{n}$}, 0) \leq 1~.
%\end{equation*}

\subsection{The spectral gap of the dynamics and its magnetization chain} \label{subsec:spetral}
To analyze the spectral gap of the Glauber dynamics, we establish the following lemma which reduces this
problem to determining the spectral-gap of the one-dimensional magnetization chain. Its proof relies
on increasing eigenfunctions, following the ideas of \cite{Nacu}.
\begin{proposition}\label{prop-spetral-glauber-mag}
   The Glauber dynamics for the mean-field Ising model and its one-dimensional magnetization chain have the same spectral gap. Furthermore, both gaps are attained by the largest
nontrivial eigenvalue.
\end{proposition}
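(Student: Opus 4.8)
The strategy is to show that every eigenfunction of the magnetization chain lifts to an eigenfunction of the full Glauber dynamics with the same eigenvalue, and conversely that the top nontrivial eigenfunction of the Glauber dynamics is itself a function of the magnetization alone. The first inclusion is easy: if $f:\magspace\to\R$ satisfies $P_M f = \lambda f$, then $F(\sigma)\deq f(S(\sigma))$ satisfies $P F = \lambda F$ because the magnetization is an autonomous Markov chain (a lumping of the full dynamics), so $\operatorname{spec}(P_M)\subseteq\operatorname{spec}(P)$ and in particular $\gap(P)\le\gap(P_M)$. The real content is the reverse inequality, $\gap(P)\ge\gap(P_M)$, equivalently $\lambda_2(P)\le\lambda_2(P_M)$.

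For that I would follow Nacu's monotonicity argument. Let $\lambda_\star=\lambda_2(P)$ be the largest nontrivial eigenvalue of the Glauber dynamics, with eigenfunction $\Phi$; since $P$ is a monotone chain under the coordinatewise order (Section~\ref{sec:monotone coupling}) and is irreducible, one shows $\lambda_\star>0$ and that $\Phi$ can be taken monotone in $\sigma$ (here one invokes that $P$ preserves monotone functions, together with a Perron--Frobenius-type argument applied to $P$ restricted to, or suitably shifted on, the relevant eigenspace). Now use the full symmetry group: $P$ commutes with every permutation of the $n$ coordinates, so the eigenspace of $\lambda_\star$ is invariant under $S_n$; averaging $\Phi$ over $S_n$ gives a permutation-symmetric eigenfunction $\bar\Phi$ for the same eigenvalue, and $\bar\Phi$ is still monotone. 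A permutation-symmetric function on $\Omega$ depends only on the number of $+1$'s, i.e. only on the magnetization, so $\bar\Phi(\sigma)=f(S(\sigma))$ for some $f$ — provided $\bar\Phi$ is not identically constant. Non-degeneracy of the averaged function is exactly the point that needs care: one must check that averaging $\Phi$ over $S_n$ does not annihilate it. This is where monotonicity pays off — a nonconstant monotone function cannot average to a constant under $S_n$, since $\bar\Phi(\mathbf 1)>\bar\Phi(-\mathbf 1)$ follows from $\Phi(\mathbf 1)>\Phi(-\mathbf 1)$ (the extreme points are fixed by all permutations). Then $f$ is a nonconstant eigenfunction of $P_M$ with eigenvalue $\lambda_\star$, orthogonal to constants, so $\lambda_\star\le\lambda_2(P_M)$, completing the sandwich $\lambda_2(P)=\lambda_2(P_M)$ and hence $\gap(P)=\gap(P_M)$.

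Finally, the ``attained by the largest nontrivial eigenvalue'' clause: for $P_M$ this is automatic, since a birth-and-death chain is reversible with real spectrum and its spectral gap is $1-\lambda_2$ by definition once one checks all nontrivial eigenvalues are bounded in absolute value by $\lambda_2$ — here one uses that the magnetization chain is $b$-lazy (recall \eqref{eq-holding-prob}) so that $P_M=bI+(1-b)Q$ for a stochastic $Q$, forcing all eigenvalues to exceed $2b-1>-1$, so the smallest eigenvalue is harmless and the gap is governed by $\lambda_2$. For the Glauber dynamics, laziness of $P_M$ does not directly bound the bottom of $\operatorname{spec}(P)$, so instead I would argue that $\Phi$ monotone forces $\lambda_\star>0$ (a monotone non-constant eigenfunction cannot have negative eigenvalue, since $P\Phi$ is again monotone and ``points the same way''), hence the most negative eigenvalue of $P$ has absolute value strictly less than $\lambda_\star$; combined with the two-sided inequality already shown this gives that $\gap(P)=1-\lambda_\star$ is attained at the top.

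**Main obstacle.** The delicate step is producing a \emph{monotone} eigenfunction for $\lambda_2(P)$ and then verifying that symmetrization over $S_n$ does not kill it — i.e. the Perron--Frobenius argument for the monotone chain $P$ on the partially ordered set $\Omega$, and the non-degeneracy check. Everything else (the lumping inclusion, the birth-and-death spectral facts, the laziness bookkeeping) is routine.
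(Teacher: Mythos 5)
Your overall architecture (lift eigenfunctions of $P_M$ to $P$ to get $\gap(P)\le\gap(P_M)$; symmetrize a monotone second eigenfunction of $P$ over $S_n$ to get the reverse) could in principle work, and your non-degeneracy observation is fine: a monotone nonconstant $\Phi$ satisfies $\Phi(\mathbf{1})>\Phi(-\mathbf{1})$, and $\pm\mathbf{1}$ are fixed by permutations, so the average survives. But the proposal stands or falls on the claim that $\lambda_2(P)$ of the \emph{full} dynamics admits a monotone eigenfunction, and your sentence ``Perron--Frobenius-type argument applied to $P$ restricted to, or suitably shifted on, the relevant eigenspace'' is not a proof: a priori you do not know the $\lambda_2$-eigenspace meets the monotone cone at all, so ``restricting to it'' is circular, and shifting by $cI$ does not change which eigenspaces contain monotone functions. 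Nacu's lemma only goes the other way (a strictly increasing eigenfunction must correspond to $\lambda_1$); producing such an eigenfunction is exactly the content to be supplied. The paper supplies it in the opposite direction from yours: it constructs a strictly increasing eigenfunction at the second eigenvalue of the \emph{one-dimensional} magnetization chain (start from $f_1+f_n+K\mathds{1}$, which is increasing and has no constant component; iterate $\theta^{-1}M$ and pass to limits along even and odd powers to isolate the increasing $\theta_1$-component, then upgrade to strict monotonicity), lifts it to $G=g\circ S$, and only then applies Nacu's lemma to conclude $\lambda_1(P)=\theta_1$. To keep your top-down route you would need a genuine cone Perron--Frobenius (Krein--Rutman) argument on the cone of monotone functions modulo constants --- $P$ preserves it by the monotone coupling of Section~\ref{sec:monotone coupling}, the cone is proper and has nonempty interior, and a negative eigenvalue cannot carry a monotone nonconstant eigenfunction, which would force the spectral radius on the complement of the constants to be $\lambda_2(P)$ with a monotone eigenfunction --- but none of this is in your write-up, and it is the heart of the matter, not a routine step.

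There is also a flaw in your treatment of the ``attained at the largest nontrivial eigenvalue'' clause for $P$. The inference ``$\Phi$ monotone forces $\lambda_\star>0$, hence the most negative eigenvalue of $P$ has absolute value strictly less than $\lambda_\star$'' is a non sequitur: knowing $\lambda_2>0$ and that it has a monotone eigenfunction says nothing by itself about how negative $\lambda_{\min}(P)$ can be. You also assert that laziness does not control the bottom of the spectrum of $P$, overlooking that the full Glauber dynamics is itself $b$-lazy: at every step the chosen spin is retained with probability at least $\tfrac12\left(1-\tanh\beta\right)>0$, so all eigenvalues of $P$ are bounded away from $-1$. The paper combines this with the Dirichlet form \eqref{eq-dirichlet-form} applied to $h(\sigma)=\sum_i\sigma(i)$, using $\mathcal{E}(h)\le 2$ and $\var_{\mu_n}\big(\sum_i\sigma(i)\big)\ge n$ by positive correlation, to get $1-\lambda_1\le 2/n$; hence $\lambda_1\to 1$ exceeds $|\lambda_{\min}|$ for large $n$, and the gap of both chains is attained at the top. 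Either adopt that argument or extract $|\lambda_{\min}(P)|\le\lambda_2(P)$ from a properly executed cone argument; as written, this part of the proposition is not established.
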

\begin{proof}
We will first show that
the one-dimensional magnetization chain has an increasing eigenfunction, corresponding to the second eigenvalue.

Recalling that $S_t$ assumes values in $\magspace\deq\{-1,-1+\frac{2}{n},\ldots,1-\frac{2}{n},1\}$, let $M$ denote its transition matrix, and let $\pi$ denote its stationary distribution. Let $1=
\theta_0 \geq \theta_1\geq \ldots \geq \theta_{n}$ be the $n+1$ eigenvalues of $M$,
corresponding to the eigenfunctions $f_0 \equiv 1, f_1, \ldots, f_n$. Define
$\theta = \max\{\theta_1,|\theta_n|\}$, and notice that, as $S_t$ is aperiodic and irreducible, $0 <\theta < 1$. Furthermore, by the existence of the monotone coupling for $S_t$ and the discussion in the previous subsection, whenever a function $f:\magspace\to\R$ is increasing so is $M f$.

Define $f:I\to\R$ by $f \deq f_1 +f_n + K \mathds{1}$, where $\mathds{1}$ is the identity function and $K>0$ is sufficiently large to ensure that $f$ is monotone increasing (e.g., $K = \frac{n}{2} \|f_1+f_n\|_{L^\infty}$ easily suffices). Notice that, by symmetry of $S_t$, $\pi(x) = \pi(-x)$ for all $x\in \magspace$, and in particular
$\sum_{x\in \magspace} x \pi(x) = 0$, that is to say, $\left<\mathds{1},f_0\right>_{L^2(\pi)} = 0$. Recalling that for all $i\neq j$ we have $\left<f_i,f_j\right>_{L^2(\pi)}=0$, it follows that for some $q_1,\ldots,q_n\in \R$ we have $f = \sum_{i=1}^n q_i f_i$ with $q_1\neq 0$ and $q_n \neq 0$,
and thus
\begin{equation*}
\left(\theta^{-1} M\right)^{m} f = \sum_{i=1}^{n} q_i (\theta_i / \theta)^{m} f_i~.
\end{equation*}
Next, define
\begin{equation*}
g=\left\{\begin{array}
  {ll} q_1 f_1 & \mbox{if }\theta = \theta_1\\
  0 & \mbox{otherwise}
\end{array}\right.
~,\quad\mbox{and}\quad
h=\left\{\begin{array}
  {ll} q_n f_n & \mbox{if }\theta = -\theta_n\\
  0 & \mbox{otherwise}
\end{array}\right.~,
\end{equation*}
and notice that
$$ \lim_{m\to\infty} \left(\theta^{-1} M\right)^{2m} f = g + h~,\quad\mbox{and}\quad\lim_{m\to\infty}\left(\theta^{-1} M\right)^{2m+1} f = g - h~.$$
As stated above, $M^m f$ is increasing for all
$m$, and thus so are the two limits $g+h$ and $g-h$ above, as well as their sum. We deduce that $g$ is an increasing function, and next claim that $g \not\equiv 0$. Indeed, if $g \equiv 0$ then both $h$ and $-h$ are increasing functions, hence necessarily $h \equiv 0$ as well; this would imply that $q_1=q_n = 0$, thus contradicting our construction of $f$.

We deduce that $g$ is an increasing eigenfunction corresponding to $\theta_1 = \theta$, and next wish to show that it is strictly increasing. Recall that for all $x \in \magspace$,
\begin{equation*}
  (M g)(x) = M\Big(x, x-\frac{2}{n}\Big) g\Big(x-\frac{2}{n}\Big) + M(x, x) g(x) + M\Big(x, x + \frac{2}{n}\Big) g\Big(x +\frac{2}{n}\Big)~.
\end{equation*}
Therefore, if for some $x\in \magspace$ we had $g(x-\frac{2}{n}) = g(x) \geq 0$, the fact that $g$ is increasing would imply that
$$ \theta_1 g(x) = (M g)(x) \geq  g(x) \geq 0~,$$
and analogously, if $g(x) = g(x+\frac{2}{n}) \leq 0$ we could write
$$ \theta_1 g(x) = (M g)(x) \leq  g(x) \leq 0~.$$
In either case, since $0 < \theta_1 < 1$ (recall that $\theta_1 = \theta$) this would in turn lead to $g(x) = 0$. By inductively substituting this fact in the above equation for $(Mg)(x)$, we would immediately get $g\equiv 0$, a contradiction.

Let $1 = \lambda_0 \geq \lambda_1 \geq \ldots \geq \lambda_{|\Omega|-1}$ denote the eigenvalues of the Glauber dynamics, and let $\lambda \deq \max\{\lambda_1, |\lambda_{2^n-1}|\}$. We translate $g$ into a function $G:\Omega\to \R$ in the obvious manner:
\begin{equation*}
G(\sigma) \deq g(S(\sigma))= g\Big({\frac{1}{n}\sum_{i=1}^{n}\sigma(i)}\Big)~.
\end{equation*}
One can verify that $G$ is indeed an eigenfunction of the Glauber dynamics corresponding to the eigenvalue
$\theta_1$, and clearly $G$ is strictly increasing with respect to the coordinate-wise partial order on $\Omega$. At this point, we refer to the following lemma of \cite{Nacu}:
\begin{lemma}[\cite{Nacu}*{Lemma 4}]
Let $P$ be the transition matrix of the Glauber dynamics, and let $\lambda_1$ be its second largest eigenvalue.
If $P$ has a strictly increasing eigenfunction $f$, then $f$ corresponds to $\lambda_1$.
\end{lemma}
The above lemma immediately implies that $G$ corresponds to the second eigenvalue of Glauber dynamics, which we denote by $\lambda_1$, and thus $\lambda_1 = \theta_1$.

It remains to show that $\lambda = \lambda_1$. To see this, first recall that all the holding probabilities of $S_t$  are bounded away from $0$, and the same applies to the entire Glauber dynamics by definition (the magnetization  remains the same if and only if the configuration remains the same). Therefore, both $\theta_n$ and $\lambda_{2^n-1}$ are bounded away from $-1$, and it remains to show that $\gap = o(1)$ for the Glauber dynamics (and hence also for its magnetization chain).

To see this, suppose $P$ is the transition kernel of the Glauber dynamics, and recall the Dirichlet representation for the second eigenvalue of a reversible chain (see \cite{LPW}*{Lemma 13.7}, and also \cite{AF}*{Chapter 3}):
\begin{align}\label{eq-dirichlet-form}
1 - \lambda_1 = \min \Big\{ \frac{\mathcal{E}(f)}
{\left<f,f\right>_{\mu_n}} \;:\; f\not\equiv 0\;,\;\E_{\mu_n}(f)=0 \Big\}~,
\end{align}
where $\E_{\mu_n}(f)$ denotes $\left<\mathds{1},f\right>_{\mu_n}$, and
$$\mathcal{E}(f) = \left<(I-P)f,f\right>_{\mu_n} = \frac{1}{2}\sum_{\sigma,\sigma'\in\Omega}\left[f(\sigma)-f(\sigma')\right]^2\mu_n(\sigma)P(\sigma,\sigma')~. $$
By considering the sum of spins, $h(\sigma) = \sum_{i=1}^n \sigma(i)$, we get $\mathcal{E}(h) \leq 2$, and since
the spins are positively correlated, $\var_{\mu_n} \sum_{i} \sigma(i) \geq n$. It follows that
\begin{equation*}
  1-\lambda_1 \leq 2/n~,
\end{equation*}
and thus $\gap = 1-\lambda_1 = 1-\theta_1$ for both the Glauber dynamics and its magnetization chain, as required.
\end{proof}

\section{High temperature regime}\label{sec:high}
In this section we prove Theorem \ref{thm-high-temp}. Subsection \ref{sec:high-cutoff-mag} establishes the cutoff of the magnetization chain, which immediately provides a lower bound on the mixing time of the entire dynamics. The matching upper bound, which completes the proof of cutoff for the Glauber dynamics, is given in Subsection \ref{sec:high-full-mixing}. The spectral gap analysis appears in Subsection \ref{sec:high-spectral}. Unless stated otherwise, assume throughout this section that $\beta = 1 - \temp$ where $\temp^2 n \to \infty$.

\subsection{Cutoff for the magnetization chain}\label{sec:high-cutoff-mag}
Clearly, the mixing of the Glauber dynamics ensures the mixing of its magnetization. Interestingly, the converse is also essentially true, as the mixing of the magnetization
turns out to be the most significant part in the mixing of the full Glauber dynamics.
We thus wish to prove the following cutoff result:
\begin{theorem}
  \label{thm-high-cutoff-mag}
Let $\beta = 1-\delta$, where $\delta > 0$ satisfies $\delta^2 n\to\infty$. Then the corresponding
magnetization chain $(S_t)$ exhibits cutoff at time $\frac{1}{2}\cdot\frac{n}{\temp}\log (\temp^2 n)$ with a window of order $n/\temp$.
\end{theorem}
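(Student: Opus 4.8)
The plan is to establish the cutoff for $(S_t)$ by separately proving the upper and lower bounds, both governed by the first moment of the magnetization together with control on higher moments. The backbone is the recursion \eqref{eq-conditional-exp-S}, $\E[S_{t+1}\mid S_t=s]=(1-n^{-1})s+\varphi(s)-\psi(s)$. For the relevant range $|s|$ not too large, Taylor-expanding $\tanh$ gives $\varphi(s)\approx \frac{1}{n}\tanh(\beta s)\approx \frac{\beta}{n}s$ up to an $O(s^3)$ correction, while $\psi(s)=O(s^2/n^2)$ is negligible. Hence $\E[S_{t+1}\mid S_t=s]\approx(1-\frac{1-\beta}{n})s=(1-\frac{\temp}{n})s$ as long as the cubic term is dominated by the linear one, i.e.\ while $|S_t|=O(\sqrt{\temp})$; more precisely one wants an inequality of the form $|\E[S_{t+1}\mid S_t=s]|\le(1-c\temp/n)|s|$ valid throughout the regime where $|s|$ is below an absolute constant, which yields contraction with rate $\rho=1-\Theta(\temp/n)$. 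Iterating, $|\E_\sigma S_t|\lesssim e^{-\temp t/n}$, so starting from the worst case $\sigma=\mathbf 1$ (by monotone coupling and symmetry it suffices to consider $S_0=1$), after $t=\frac12\frac{n}{\temp}\log(\temp^2 n)+c\frac{n}{\temp}$ steps we have $\E_1 S_t\lesssim \frac{1}{\sqrt{\temp^2 n}}\,e^{-c\temp\cdot(1)}$, which can be made an arbitrarily small multiple of $1/\sqrt{\temp n}$.

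\medskip

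\emph{Upper bound.} Using Lemma \ref{lem-var-bound} with $\rho=1-\Theta(\temp/n)$ gives $\var_{z_0}(S_t)\le v_1/(1-\rho^2)=O(1/(\temp n))$, since a single step moves $S$ by $\pm 2/n$ so $v_1=O(1/n^2)$ and $1-\rho^2\asymp\temp/n$. Thus at time $t_1\deq\frac12\frac n\temp\log(\temp^2n)$ the chain $(S_t)$ has mean $O(1/\sqrt{\temp n})$ and standard deviation $O(1/\sqrt{\temp n})$, i.e.\ it sits within $O(1/\sqrt{\temp n})$ of $0$ with high probability. From such a state, Corollary \ref{cor-St-hitting-0} (the supermartingale hitting estimate) shows that $\tau_0$ is $O(n/\temp)$ with probability close to $1$: indeed $\P_s(\tau_0>u)\le cn|s|/\sqrt u$, and with $|s|=O(1/\sqrt{\temp n})$ this is small once $u=\Theta(n/\temp)$. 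Hence $\P_1(\tau_0\ge T)<\epsilon$ for $T=c_\epsilon n/\temp$ steps past $t_1$; Lemma \ref{lem-taumag-tau-0} then couples $(S_t)$ to a stationary copy so that they coalesce within $O(n/\temp)$ further steps with probability $\ge 1-\epsilon$, giving $\|\P_1(S_t\in\cdot)-\pi_n\|_{\mathrm{TV}}\le\epsilon$ at time $t_1+O(n/\temp)$. Since $\sigma=\mathbf 1$ (hence $S_0=1$) is extremal under the monotone coupling, this bounds $d_n(t)$ for all starting states, establishing the upper bound with window $n/\temp$.

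\medskip

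\emph{Lower bound.} This is the more delicate half and I expect it to be the main obstacle, because the crude first/second moment estimates are not sharp enough to pin down the cutoff \emph{location}. The strategy is: at time $t_2\deq t_1-K\frac n\temp$ (for large constant $K$) show that $|S_{t_2}|$ is typically of order at least $\omega/\sqrt{\temp n}$ for some $\omega=\omega(K)\to\infty$, which by the shape of $\pi_n$ (concentrated within $O(1/\sqrt{\temp n})$ of $0$, cf.\ Figure \ref{fig:mag-stat}) forces $\|\P_1(S_{t_2}\in\cdot)-\pi_n\|_{\mathrm{TV}}\ge 1-\epsilon$. To prove the concentration of $|S_{t_2}|$ away from $0$ one needs a lower bound on $\E_1 S_{t_2}$ \emph{and} an upper bound on its fluctuations; the former requires controlling the cubic correction in the recursion, which means bounding $\E_1 S_t^3$ along the way. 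So the key technical step is a third-moment estimate: show $\E_1 S_t^3=O\big((\E_1 S_t)^3 + (\E_1 S_t)\cdot\frac{1}{\temp n}\big)$ or similar, obtained by deriving a recursion for $\E S_{t+1}^3$ from \eqref{eq-magnet-transit} analogous to \eqref{eq-conditional-exp-S} and Gronwall-type iteration. With this in hand, the linearized recursion $\E_1 S_{t+1}\ge(1-(1+o(1))\temp/n)\E_1 S_t$ is justified down to time $t_2$, giving $\E_1 S_{t_2}\gtrsim e^{K}/\sqrt{\temp^2 n}$, and a variance bound (again Lemma \ref{lem-var-bound}) or Chebyshev finishes the concentration. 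Combining the two halves yields cutoff at $t_1=\frac12\frac n\temp\log(\temp^2 n)$ with window $n/\temp$.
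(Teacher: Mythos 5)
There is a genuine gap, and it sits at the heart of both halves of your argument: you rely on the purely linear contraction $|\E_1 S_t|\lesssim e^{-\temp t/n}$ starting from $S_0=1$. At $t=\tfrac{n}{2\temp}\log(\temp^2 n)+c\tfrac n\temp$ this gives only $\E_1 S_t\lesssim e^{-c}(\temp^2 n)^{-1/2}=e^{-c}/(\temp\sqrt n)$, which is \emph{not} a small multiple of $1/\sqrt{\temp n}$: the ratio is $e^{-c}/\sqrt\temp\to\infty$ when $\temp=o(1)$, and no constant $c$ fixes this. Concretely, with mean $\asymp 1/(\temp\sqrt n)$ at time $t_1$, Corollary \ref{cor-St-hitting-0} gives $\P(\tau_0>u)\lesssim \frac{n\cdot 1/(\temp\sqrt n)}{\sqrt u}$, which is small only for $u\gg n/\temp^2$, i.e.\ far outside the window $n/\temp$; equivalently, linear decay alone locates the upper bound at $\tfrac{n}{2\temp}\log(\temp n)$, which exceeds $\tfrac{n}{2\temp}\log(\temp^2 n)$ by $\tfrac{n}{2\temp}\log(1/\temp)$ — the wrong cutoff constant whenever $\temp=n^{-a}$, $a>0$. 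The missing ingredient is the cubic part of the drift: from \eqref{eq-s+-decreasing}, $s_{t+1}\le s_t-\tfrac{\temp}{n}s_t-\tfrac{s_t^3}{5n}$, and the paper's Theorem \ref{thm-tau-0} sums over geometric levels $b_i$ to show that the cubic term carries the magnetization from $1$ down to $\sqrt\temp$ in only $O(n/\temp)$ steps (absorbed in the window), while the linear term accounts for the decay from $\sqrt\temp$ down to $1/\sqrt{\temp n}$ in $\tfrac{n}{2\temp}\log(\temp^2 n)+O(n/\temp)$ steps — this is where $\log(\temp^2 n)$, i.e.\ the cutoff constant $\tfrac12$, actually comes from. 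Your remark that the cubic term is dominated by the linear one "while $|S_t|=O(\sqrt\temp)$" is true but does not cover the initial phase from $S_0=1$, which is exactly the phase your estimate mishandles.

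The same confusion of scales affects your lower bound: the recursion $\E_1 S_{t+1}\ge(1-(1+o(1))\temp/n)\E_1 S_t$ is false from $s_0=1$ while $\E S_t\gg\sqrt\temp$ (there the per-step decay factor is $1-\Theta(S_t^2)/n\ll 1-\temp/n$), so the conclusion $\E_1 S_{t_2}\gtrsim e^K/\sqrt{\temp^2 n}$ cannot be reached this way (and is in fact the wrong scale; the truth is $\asymp e^K/\sqrt{\temp n}$). The paper avoids this by starting the lower-bound chain at $s_0=\sqrt\temp/3$ rather than at $1$, and then — as you correctly anticipated — uses a third-moment recursion \eqref{eq-3rd-moment-bound} to control the cubic correction and justify near-linear decay of $Z_t=|S_t|\eta^{-t}$, arriving at $\E_{s_0}|S_{\overline t}|\ge e^{\gamma/2}/(10\sqrt{\temp n})$, after which Chebyshev against the stationary variance $O(1/\temp n)$ finishes. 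Finally, a smaller but real issue in your upper bound: Lemma \ref{lem-taumag-tau-0} scales the \emph{entire} time $T$ by $c_\epsilon$, so applied with $T\approx t_1$ it destroys both window and constant; the paper instead proves the sharper coalescence Lemma \ref{lem-mag-coalesce-from-0}, which uses the mean bound \eqref{eq-Est-bound-from-T}, the variance bound and a supermartingale/optional-stopping argument to coalesce two magnetization chains within $O(\gamma n/\temp)$ steps after $t_n(\gamma)$. Your "restart at $t_1$" idea can be patched along those lines, but only after the first-moment estimate at $t_1$ is corrected via the two-phase (cubic then linear) analysis.
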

Notice that Lemma \ref{lem-all-plus-mag-full} then gives the following corollary for the special case where the initial state of the dynamics is the all-plus configuration:
\begin{corollary}\label{cor-high-from-all-plus}
  Let $\temp =\delta(n) > 0$ be such that $\temp^2 n\to\infty$ with $n$, and let $(X_t)$ denote
    the Glauber dynamics for the mean-field Ising model with parameter $\beta=1-\temp$, started
    from the all-plus configuration. Then $(X_t)$ exhibits cutoff at time $\frac{1}{2}(n/\temp)\log(\temp^2 n)$
  with window size $n/\temp$.
\end{corollary}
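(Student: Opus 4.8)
The plan is to obtain this corollary as an immediate consequence of the cutoff for the magnetization chain (Theorem~\ref{thm-high-cutoff-mag}) together with the exact correspondence recorded in Lemma~\ref{lem-all-plus-mag-full}. First I would invoke that lemma: its identity \eqref{eq-allplus-xt-st-equiv} states that, when $X_0=\mathbf{1}$,
\[
\|\P_{\mathbf 1}(X_t\in\cdot)-\mu_n\|_{\mathrm{TV}} \;=\; \|\P_{\mathbf 1}(S_t\in\cdot)-\pi_n\|_{\mathrm{TV}}\qquad\text{for every }t ,
\]
so the convergence profile of the full Glauber dynamics started from the all-plus configuration coincides identically (at every time, not merely asymptotically) with that of the one-dimensional magnetization chain started from $S_0=1$.

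Next I would note that $S_0=1$ (equivalently $S_0=-1$, by the symmetry of $(S_t)$ observed after \eqref{eq-magnet-transit}) is a worst-case starting state for the magnetization chain: by the monotone coupling of Section~\ref{sec:monotone coupling} every magnetization chain is sandwiched between the ones started at $1$ and at $-1$, and for a monotone birth-and-death chain the total-variation distance to stationarity is maximized at an extreme state. Hence $\|\P_{\mathbf 1}(S_t\in\cdot)-\pi_n\|_{\mathrm{TV}}$ equals the worst-case distance $d_n(t)$ of $(S_t)$, which by Theorem~\ref{thm-high-cutoff-mag} drops from near $1$ to near $0$ in a window of order $n/\temp$ around $\frac12(n/\temp)\log(\temp^2 n)$. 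By the displayed equality this sharp transition transfers verbatim to $\|\P_{\mathbf 1}(X_t\in\cdot)-\mu_n\|_{\mathrm{TV}}$; and since $\temp^2 n\to\infty$ forces $\log(\temp^2 n)\to\infty$, the window is $o$ of the cutoff point, so the defining condition \eqref{eq-cutoff-def} for cutoff is met.

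There is essentially no obstacle at this stage: all of the analytic content — tracking the moments of $(S_t)$ for both the upper and the lower bound — is contained in Theorem~\ref{thm-high-cutoff-mag}, proved in the remainder of this section, while Lemma~\ref{lem-all-plus-mag-full} performs the (elementary, symmetry-based) bridging from the scalar chain to the $2^n$-dimensional dynamics. The only point deserving a line of care is the reduction of the worst-case magnetization start to $S_0=\pm1$, which follows at once from monotonicity and the symmetry of $(S_t)$; alternatively, one may simply observe that the lower bound in the proof of Theorem~\ref{thm-high-cutoff-mag} is itself established from the start $S_0=1$, so Lemma~\ref{lem-all-plus-mag-full} immediately yields both the upper and the lower bound for $X_0=\mathbf{1}$.
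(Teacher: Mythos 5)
Your route is the paper's route: the corollary is meant to follow from Theorem~\ref{thm-high-cutoff-mag} together with the identity \eqref{eq-allplus-xt-st-equiv} of Lemma~\ref{lem-all-plus-mag-full}, and the upper-bound half transfers trivially since the distance from the start $S_0=1$ is at most the worst-case distance. The gap is in how you transfer the \emph{lower} bound to the start $S_0=1$. Your main justification --- ``for a monotone birth-and-death chain the total-variation distance to stationarity is maximized at an extreme state'' --- is not proved anywhere in the paper and is false as a general principle (monotonicity only controls probabilities of \emph{increasing} events; for a lazy monotone chain whose stationary law is bimodal with a small metastable well in the middle, the middle start can be much farther from $\pi$ than either extreme). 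The sandwich-plus-symmetry argument that monotonicity does give yields only $d_n(t)\leq 2\|\P_{1}(S_t\in\cdot)-\pi_n\|_{\mathrm{TV}}$, i.e.\ distance at least about $\tfrac12$ from the start $1$ before the cutoff time, which is not enough for cutoff (one needs the distance from $1$ to tend to $1$ at time $\overline{t}=\frac{n}{2\temp}\log(\temp^2 n)-\gamma\frac n\temp$ as $\gamma\to\infty$). Your fallback claim is also factually wrong: the lower bound in Section~\ref{sec:high-lower-mag} is \emph{not} established from $s_0=1$ but from $s_0=\sqrt{\temp}/3$, and this choice is essential there, since the summed error term $2|s_0|^3/\temp$ must be small compared with $|s_0|$, which fails badly at $s_0=1$.

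The needed transfer is true, but it requires a short extra argument, e.g.: let $s_0=\sqrt{\temp}/3$ and $L=\mathrm{e}^{\gamma/2}/(10\sqrt{\temp n})$ as in \eqref{eq-S-t-bar-bound}. By the monotone coupling, $\P_{1}(S_{\overline{t}}\geq L/2)\geq \P_{s_0}(S_{\overline{t}}\geq L/2)$, and also $\P_{s_0}(S_{\overline{t}}\leq -L/2)\leq \P_{0}(S_{\overline{t}}\leq -L/2)$; since $\E_0 S_{\overline{t}}=0$ and $\var_0(S_{\overline{t}})\leq 16/(\temp n)$ (Lemma~\ref{lem-var-bound} with the contraction estimate), Chebyshev gives $\P_{0}(|S_{\overline{t}}|\geq L/2)\leq c\,\mathrm{e}^{-\gamma}$. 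Combined with $\P_{s_0}(|S_{\overline{t}}|\geq L/2)\geq 1-c\,\mathrm{e}^{-\gamma}$ from the proof of the lower bound, this yields $\P_{1}(|S_{\overline{t}}|\geq L/2)\geq 1-2c\,\mathrm{e}^{-\gamma}$, so \eqref{eq-sub-lowerbound-mixing} holds verbatim for the start $S_0=1$; only then does Lemma~\ref{lem-all-plus-mag-full} finish the proof exactly as you describe.
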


\subsubsection{Upper bound}\label{sec:high-upper-mag}
Our goal in this subsection is to show the following:
\begin{align}\label{eq-sub-upperbound-mixing}
\lim_{\gamma \to\infty} \limsup_{n\to \infty} \; d_n \left(\frac{1}{2}\cdot\frac{n}{\temp}\log (\temp^2 n) + \gamma
\frac{n}{\temp}\right) = 0~,
\end{align}
where $d_n(\cdot)$ is with respect to the magnetization chain $(S_t)$ and its stationary distribution.
This will be obtained using an upper bound on the coalescence time of two instances of the magnetization chain.
Given the properties of its stationary distribution (see Figure \ref{fig:mag-stat}), we will mainly be interested in the time it takes this chain to hit near $0$.
The following theorem provides an upper bound for that hitting time.
\begin{theorem}\label{thm-tau-0}
For $0 < \beta < 1 + O(n^{-1/2})$, consider the magnetization chain started from some arbitrary state $s_0$, and let $\tau_0=\min\{ t : |S_t| \leq n^{-1}\}$. Write $\beta = 1-\temp$, and for $\gamma > 0$ define
  \begin{equation}\label{eq-tn-def}
t_n(\gamma) =\left\{\begin{array}
  {ll}
  \displaystyle{\frac{n}{2\temp}\log(\temp^2 n) + (\gamma+3)\frac{n}{\temp}} &  \temp^2 n \to\infty~,\\
  \noalign{\medskip}
  \displaystyle{\left(200 + 6\gamma\left(1+ 6\sqrt{\temp^2 n}\right)\right) n^{3/2}} & \temp^2 n = O(1)~.
\end{array}\right.\end{equation}
Then there exists some $c>0$ such that $\P_{s_0}( \tau_0 > t_n(\gamma)) \leq c / \sqrt{\gamma}~$.
\end{theorem}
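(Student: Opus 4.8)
The plan is to control the hitting time $\tau_0$ via the supermartingale machinery of Lemma \ref{lem-supermatingale-positive}, after first reducing to a setting where $(|S_t|)$ behaves like a non-negative supermartingale with a uniform positive lower bound on its conditional variance. First I would record the drift estimate: using \eqref{eq-conditional-exp-S}, for $\beta = 1-\temp$ and $s>0$ one has $\E[S_{t+1}\mid S_t = s] = (1-\tfrac{1}{n})s + \varphi(s) - \psi(s)$, and since $\tanh(\beta s)\le \beta s$ we get $\varphi(s)\le \tfrac{\beta}{n}s$ and $\psi(s)\ge 0$, whence $\E[S_{t+1}\mid S_t=s] \le (1 - \tfrac{\temp}{n})s + O(n^{-2})$. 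Thus up to a negligible additive error, $(S_t)$ contracts towards $0$ by the factor $\rho \deq 1 - \temp/n$ (this is exactly the contraction hypothesis \eqref{eq.abscontr}, and it is why $\rho$ and the window $n/\temp$ appear). By the symmetry of the chain the same holds for $-S_t$ when $s<0$, so $(|S_t|)$ is (essentially) a supermartingale. The conditional variance of a single step is $\Theta(1/n^2)$ uniformly, by the computation \eqref{eq-holding-prob} (the chain moves by $\pm 2/n$ with probability bounded below as long as $|s|$ is bounded away from $1$; near $\pm 1$ one argues the chain is pushed inward quickly, a standard ``burn-in'' that costs $O(n)$ steps absorbed into the constant).

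The core of the argument is a two-phase decomposition. \emph{Phase 1 (drift phase):} iterating the contraction, $\E_{s_0}|S_t| \le \rho^t |s_0| + O(n^{-1}) \le e^{-\temp t/n}+O(n^{-1})$. Taking $t_1 = \tfrac{n}{2\temp}\log(\temp^2 n)$ gives $\E|S_{t_1}| = O(1/\sqrt{\temp^2 n}) + O(n^{-1})$; together with Lemma \ref{lem-var-bound}, which bounds $\var_{s_0}(S_{t})$ by $v_1/(1-\rho^2) = \Theta((1/n^2)/(\temp/n)) = \Theta(1/(\temp n))$, a Chebyshev/Markov bound shows that after $t_1 + \gamma n/\temp$ steps, with probability $\ge 1 - c/\sqrt{\gamma}$ the chain has $|S_{t_1+\gamma n/\temp}| \le C/\sqrt{\temp n}$ for a suitable constant $C=C(\gamma)$ (here the extra $\gamma n/\temp$ steps only help, since the chain keeps contracting, but they are needed to pass from ``small expectation'' to ``small with high probability'' with the right dependence on $\gamma$ — alternatively one absorbs this into Phase 2). \emph{Phase 2 (diffusive phase):} once $|S_t| \le C/\sqrt{\temp n}$, apply Lemma \ref{lem-supermatingale-positive} to $W_t = |S_{t}|$ (rescaled to $n|S_t|/2 \in \Z_{\ge0}$ so that $B=1$ and $\sigma^2 = \Theta(1)$), with $k = n|S_t|/2 = \Theta(\sqrt{n/\temp})$. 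The lemma gives $\P(\tau_0 > u \mid \text{start at } k) \le 4k/(\sigma\sqrt u)$, so taking $u = \Theta(\gamma \cdot k^2) = \Theta(\gamma \cdot n/\temp)$ makes this $O(1/\sqrt\gamma)$. Since $n/\temp \le \tfrac{n}{2\temp}\log(\temp^2 n)$ in the regime $\temp^2 n\to\infty$, this is comfortably within the stated budget $t_n(\gamma)$; in the critical regime $\temp^2 n = O(1)$ the same computation with $k=\Theta(\sqrt n \cdot \max(1,\sqrt{\temp^2 n}))$ and $u=\Theta(\gamma k^2)$ produces exactly the $O(\gamma(1+\sqrt{\temp^2 n})) n^{3/2}$ term, with the additive $200\, n^{3/2}$ absorbing the constant-probability burn-in and the Phase-1 contribution.

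Assembling: condition on the Phase-1 success event (probability $\ge 1-c/\sqrt\gamma$), then run Phase 2 from the resulting small state, which reaches $\{|S_t|\le n^{-1}\}$ within its budget except with probability $O(1/\sqrt\gamma)$; a union bound over the two failure events gives $\P_{s_0}(\tau_0 > t_n(\gamma)) \le c/\sqrt\gamma$ after adjusting $c$.

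The main obstacle I anticipate is making Phase 1 quantitatively tight enough: the contraction estimate must be carried with the correct $O(n^{-1})$ (or better) additive error so that $\E|S_{t_1}|$ genuinely drops to order $1/\sqrt{\temp^2 n}$ and not merely to $o(1)$ — this is what pins the cutoff location at $\tfrac{n}{2\temp}\log(\temp^2 n)$ rather than at a constant multiple of it. A second, more technical, nuisance is that $(|S_t|)$ is not literally a supermartingale near $s=0$ (the error term $O(n^{-2})$ can dominate $\temp s/n$ when $|s|\lesssim 1/(\temp n)$), and near $s=\pm1$ the variance lower bound degrades; both are handled by the standard device of stopping the supermartingale argument at the first time $|S_t|\le n^{-1}$ (which is exactly $\tau_0$) and by a separate $O(n)$-step burn-in away from $\pm 1$, but care is needed to confirm these corrections do not disturb the $\gamma$-dependence of the probability bound. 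Finally, one should check that all estimates are uniform over the starting state $s_0 \in [-1,1]$, which follows since the worst case is $|s_0|=1$ and that only affects the Phase-1 bound through the already-accounted-for $\log$ factor.
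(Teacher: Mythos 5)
There is a genuine gap, and it is exactly at the point you flag as your ``main obstacle'': your Phase 1 uses only the linear contraction factor $1-\temp/n$, and that is quantitatively insufficient. Starting from the worst case $|s_0|=1$, pure linear contraction after $t_1=\frac{n}{2\temp}\log(\temp^2 n)$ steps gives only $\E|S_{t_1}|\lesssim (1-\temp/n)^{t_1} = (\temp^2n)^{-1/2} = \frac{1}{\temp\sqrt n}$, which is a factor $1/\sqrt{\temp}$ larger than the $1/\sqrt{\temp n}$ one actually needs. Feeding $\E|S_{t_1}|\approx 1/(\temp\sqrt n)$ into the hitting-time estimate (Corollary \ref{cor-St-hitting-0}, i.e.\ your Phase 2) with $u=\gamma n/\temp$ yields a bound of order $\frac{n\cdot(\temp\sqrt n)^{-1}}{\sqrt{\gamma n/\temp}} = \frac{1}{\sqrt{\gamma\temp}}$, not $c/\sqrt{\gamma}$; and your Chebyshev step cannot rescue this, since the mean itself exceeds the target $C/\sqrt{\temp n}$ by the unbounded factor $1/\sqrt\temp$ when $\temp=o(1)$. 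Closing the gap with more linear contraction would cost an additional $\frac{n}{2\temp}\log(1/\temp)$ steps, which is not absorbed by the window $(\gamma+3)n/\temp$ and so overshoots the stated $t_n(\gamma)$. The situation is worse in the second regime: the theorem allows $\beta<1+O(n^{-1/2})$, so $\temp$ may be zero or negative there, and a Phase 1 built on the contraction $1-\temp/n$ gives nothing at all, leaving the $O(n^{3/2})$ claim unsupported.

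The missing ingredient is the nonlinear drift coming from the Taylor expansion $\tanh(x)\le x-x^3/5$. The paper works with $s_t=\E_{s_0}\bigl[|S_t|\one_{\{\tau_0>t\}}\bigr]$, uses concavity and Jensen to get $s_{t+1}\le s_t-\frac{\temp}{n}s_t-\frac{s_t^3}{5n}$, and then runs a multiscale argument over levels $b_i$ with crossing times $u_{i+1}-u_i\lesssim n/(\temp+b_i^2)$: the cubic term drives the magnetization from order $1$ down to order $\sqrt\temp$ in only $O(n/\temp)$ steps (this is what the additive $3n/\temp$ in $t_n$ pays for), and only below $\sqrt\temp$ does the linear contraction take over and produce the $\frac{n}{2\temp}\log(\temp^2 n)$ main term, landing at $s_{t_n(0)}\le 1/\sqrt{\temp n}$; in the window $\temp^2 n=O(1)$ the cubic term alone carries the chain from $1$ down to roughly $n^{-1/4}$ in $O(n^{3/2})$ steps. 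After that, the paper concludes exactly as in your Phase 2, via Corollary \ref{cor-St-hitting-0}. So your end-game is the right one, but without the cubic-drift/multiscale analysis the first phase cannot reach the required scale within the stated time budget, and the proposal does not prove the theorem in either regime.
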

\begin{proof}
For any $t \geq 1$, define:
$$ s_t \deq \E_{s_0}\left[ |S_t| \one_{\{\tau_0 > t\}}\right]~.$$
Suppose $s > 0$. Recalling \eqref{eq-conditional-exp-S} and bearing in mind the concavity of the Hyperbolic tangent and the fact that $\psi(s) \geq 0$, we obtain that
\begin{equation*}
\E (S_{t+1} \given S_t =s)\leq s + \frac{1}{n}\big(\tanh (\beta s) -s\big)~.
\end{equation*}
Using symmetry for the case $s<0$, we can then deduce that
\begin{equation} \label{eq.ineq-2.17}
  \E\left[ |S_{t+1}| \given S_t \right]
    \leq |S_t|
       + \frac{1}{n}\big( \tanh(\beta |S_t|) - |S_t| \big) \mbox{ for any $t < \tau_0$}~.
\end{equation}
Hence, combining the concavity of the Hyperbolic tangent together with Jensen's inequality yields
\begin{equation}\label{eq-st-one-step}
  s_{t+1} \leq \left(1-\frac{1}{n}\right)s_t + \frac{1}{n}\tanh(\beta s_t)~.
\end{equation}
Since the Taylor expansion of $\tanh(x)$ is
\begin{equation}\label{eq-tanh-taylor}
\tanh(x) = x - \frac{x^3}{3} + \frac{2x^5}{15} - \frac{17x^7}{315}+O(x^9)~,
\end{equation} we have
$\tanh(x) \leq x-\frac{x^3}{5}$ for $0 \leq x \leq 1$, giving
\begin{align}\label{eq-s+-decreasing}
s_{t+1} &\leq \Big(1-\frac{1}{n}\Big)s_t + \frac{1}{n} \tanh(\beta s_t)
\leq \Big(1-\frac{1}{n}\Big)s_t + \frac{1}{n}\beta s_t - \frac{(s_t)^3}{5n}  \nonumber\\
&= s_t - \frac{\temp}{n}s_t -\frac{(s_t)^3}{5n} ~.
\end{align}
For some $1 < a \leq 2$ to be defined later, set
$$b_i = a^{-i}/4~,\mbox{ and }u_i = \min\{t:s_t \leq b_i\}~.$$
Notice that $s_t$ is decreasing in $t$ by \eqref{eq-s+-decreasing}, thus for every $t \in [u_i,u_{i+1}]$ we have $$b_i/a =
b_{i+1} \leq s_t \leq b_i~.$$ It follows that
$$ s_{t+1} \leq s_t - \frac{\temp}{n}\cdot \frac{b_i}{a} - \frac{b_i^3}{5a^3 n}~,$$
and
\begin{equation}\label{eq-ui-iteration}u_{i+1}-u_i \leq \left(\frac{a-1}{a}b_i\right)/\left(\frac{\temp}{n}\frac{b_i}{a} + \frac{b_i^3}{5a^3n}\right)
\leq \frac{5(a-1)a^2 n}{5\temp a^2 + b_i^2}~.\end{equation}

For the case $\temp^2 n  \to \infty$, define:
\begin{equation*}i_0 = \min\{i: b_i \leq 1/\sqrt{\temp n}\}~.
\end{equation*}
The following holds:
\begin{align*} \sum_{i=1}^{i_0} (u_{i+1}-u_i) &\leq
\sum_{i=1}^{i_0} \frac{5(a-1)a^2 n}{5\temp a^2  + b_i^2} \leq \mathop{\sum_{ i\leq i_0}}_{b_i^2 > \temp}
\frac{5(a-1)a^2 n}{b_i^2} +
\mathop{\sum_{i\leq i_0}}_{b_i^2 \leq \temp} \frac{5(a-1)a^2 n}{5\temp a^2 } \\
&\leq \frac{5n a^2}{\temp(a+1)} + \frac{a-1}{2\log a}\cdot \frac{n}{\temp} \log(\temp^2 n)~,
\end{align*}
where in the last inequality we used the fact that the series $\{b_i^{-2}\}$ is a geometric series with a ratio
$a^2$, and that, as $b_i^2 \geq 1/(\temp n)$ for all $i \leq i_0$, the number of summands such that $b_i^2 \leq
\temp$ is at most $\log_a(\sqrt{\temp^2 n})$. Therefore, choosing $a = 1+n^{-1}$, we deduce that:
\begin{align}
\sum_{i=1}^{i_0} (u_{i+1}-u_i) &\leq \left(\frac{5}{2}+O(n^{-1})\right)\frac{n}{\temp} + \left(\frac{1}{2}+O(n^{-1})\right)\frac{n}{\temp}\log(\temp^2 n)\nonumber\\
&\leq 3\frac{n}{\temp} + \frac{n}{2\temp}\log(\temp^2 n)~,%\label{eq-u-i0-bound}
\end{align}
where the last inequality holds for any sufficiently large $n$. Combining the above inequality and the
definition of $i_0$, we deduce that
\begin{equation}\label{eq-Es0-tau0-bound}
(\;s_{t_n(0)} =\;)~ \E_{s_0}\left[ |S_{t_n(0)}| \one_{\{\tau_0 > t_n(0)\}}\right] \leq 1/\sqrt{\temp n}~.\end{equation}
Thus, by
Corollary \ref{cor-St-hitting-0} (after taking expectation), for some fixed $c>0$
$$\P( \tau_0 > t_n(\gamma) ) \leq c / \sqrt{\gamma}~.$$

For the case $\temp^2 n = O(1)$, choose $a=2$, that is, $b_i=2^{-(i+2)}$, and define
\begin{equation}\label{eq-bi-bounds}
  i_1 = \min\{i: b_i \leq n^{-1/4} \vee 5 \sqrt{|\temp|}\}~.
\end{equation}
Substituting $a=2$ in \eqref{eq-ui-iteration}, while noting that $\delta > -\frac{1}{25}b_i^2$ for all $i < i_1$, gives
\begin{align*} \sum_{i=1}^{i_1} (u_{i+1}-u_i) &\leq
\sum_{i=1}^{i_1} \frac{20 n}{20\temp + b_i^2} \leq
100 \sum_{i=1}^{i_1} \frac{n}{b_i^2} \leq 200 \frac{n}{b_{i_1}^2} \\
&\leq \left(200n^{3/2} \wedge 8 \frac{n}{|\delta|} \right)
\leq 200 n^{3/2}~.
\end{align*}
where the last inequality in the first line incorporated the geometric sum over $\{b_i^{-2}\}$.
By \eqref{eq-bi-bounds},
$$ b_{i_1} \leq n^{-1/4} \vee 5\sqrt{|\temp|} \leq n^{-1/4}\left(1+5(\temp^2 n)^{1/4}\right)~,$$
and as in the subcritical case, we now combine the above results with an application of Corollary \ref{cor-St-hitting-0} (after taking expectation), and deduce that for some
absolute constant $c> 0$,
$$\P( \tau_0 > t_n(\gamma) ) \leq c / \sqrt{\gamma}~, $$
as required.
\end{proof}

Apart from drifting toward 0, and as we had previously mentioned, the magnetization chain at high temperatures is in fact contracting; this is a
special case of the following lemma.

\begin{lemma}\label{lem-contracting}
Let $(S_t)$ and $(\tS_t)$ be the corresponding magnetization chains of two instances of the Glauber dynamics for some $\beta=1-\temp$ (where $\temp$ is not necessarily positive), and put $D_t \deq S_t - \tS_t$. The following then holds:
\begin{equation}\label{eq-contraction}
\E[ D_{t+1}-D_t \mid D_t ] \leq - \frac{\temp}{n}D_t + \frac{|D_t|}{n^2} + O(n^{-4})~.
\end{equation}
\end{lemma}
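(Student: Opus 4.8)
The plan is to reduce the assertion to the one‑step drift formula \eqref{eq-conditional-exp-S} for the magnetization chain. Since $\E[S_{t+1}\mid S_t]$ depends on $S_t$ alone, and likewise $\E[\tS_{t+1}\mid\tS_t]$ on $\tS_t$, linearity of expectation gives, for the pair evolving under (say) the monotone coupling,
\begin{equation*}
\E\big[D_{t+1}-D_t \;\big|\; S_t=s,\ \tS_t=\tilde s\big] \;=\; -\tfrac1n(s-\tilde s)\;+\;\big[\varphi(s)-\varphi(\tilde s)\big]\;-\;\big[\psi(s)-\psi(\tilde s)\big].
\end{equation*}
We must bound the right‑hand side by an expression in $D_t=s-\tilde s$; it suffices to treat the case $s\ge\tilde s$ (so $D_t\ge 0$ and $|D_t|=D_t$), which is the one used in the sequel, the monotone coupling preserving $S_t\ge\tS_t$.

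Next I would control the $\varphi$–contribution \emph{exactly}. Differentiating, $\varphi'(u)=\frac{\beta}{2n}\big[\tanh'(\beta(u+n^{-1}))+\tanh'(\beta(u-n^{-1}))\big]$, and since $0\le \tanh'\le 1$ this gives $0\le\varphi'(u)\le \beta/n$ for every $u$. Hence for $s\ge\tilde s$,
\begin{equation*}
0\;\le\;\varphi(s)-\varphi(\tilde s)\;\le\;\tfrac{\beta}{n}(s-\tilde s)\;=\;\tfrac{\beta}{n}D_t,
\end{equation*}
so that, using $\beta=1-\temp$, the drift and $\varphi$ terms combine to $-\tfrac1n D_t+[\varphi(s)-\varphi(\tilde s)]\le \tfrac{\beta-1}{n}D_t=-\tfrac{\temp}{n}D_t$, which is already the main term of \eqref{eq-contraction}.

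Finally I would bound the $\psi$–contribution, which yields the $\tfrac{|D_t|}{n^2}+O(n^{-4})$ error. Writing $\psi(u)=\frac{u}{2n}g_n(u)$ with $g_n(u)\deq\tanh(\beta(u+n^{-1}))-\tanh(\beta(u-n^{-1}))$, a third‑order Taylor expansion gives $g_n(u)=\frac{2\beta}{n}\tanh'(\beta u)+O(n^{-3})$ uniformly on $[-1,1]$, whence $\psi(u)=\frac{\beta}{n^2}\,u\tanh'(\beta u)+O(n^{-4})$ (using $|u|\le 1$). The function $u\mapsto u\tanh'(\beta u)=u\big(1-\tanh^2(\beta u)\big)$ has derivative $\big(1-\tanh^2(\beta u)\big)\big(1-2\beta u\tanh(\beta u)\big)$, which has absolute value at most $1$ on $[-1,1]$ for $\beta=1+o(1)$ — it equals $1$ at $u=0$ and, by a short Taylor/monotonicity check, is smaller in magnitude elsewhere — so this function is $1$‑Lipschitz there. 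Therefore $|\psi(s)-\psi(\tilde s)|\le \frac{\beta}{n^2}|s-\tilde s|+O(n^{-4})\le \frac{|D_t|}{n^2}+O(n^{-4})$ (for $\temp\ge0$ since then $\beta\le1$; for $\beta$ slightly above $1$ one picks up a further harmless $O(|\temp|n^{-2})$). Because $\psi$ is not monotone, only this two‑sided bound is available, which is exactly why the term $+\tfrac{|D_t|}{n^2}$ must appear rather than something absorbable into the error. Assembling the three displays proves \eqref{eq-contraction}.

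The only genuinely delicate point is the last step: one must verify that $\psi(s)-\psi(\tilde s)$ is of order $n^{-2}|D_t|$ — i.e.\ proportional to $D_t$, not merely $O(n^{-2})$ — which is what the smoothness/Lipschitz estimate for $u\tanh'(\beta u)$ is for, together with checking that its Lipschitz constant on $[-1,1]$ does not exceed $1$. (A cruder bound, simply $\sup_u|\psi'(u)|=O(n^{-2})$ and hence $|\psi(s)-\psi(\tilde s)|=O(n^{-2}|D_t|)$, gives the same statement up to the value of the constant, which already suffices for every application of the lemma, e.g.\ the contraction property with factor $1-\tfrac{\temp}{n}(1+o(1))$.)
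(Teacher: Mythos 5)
Your argument is correct and follows essentially the same route as the paper: decompose the drift via \eqref{eq-conditional-exp-S}, bound the $\varphi$-difference by $\tfrac{\beta}{n}D_t$ using the Mean Value Theorem, and control the $\psi$-difference by Taylor-expanding to $\tfrac{\beta}{n^2}\,u\,\mathrm{sech}^2(\beta u)+O(n^{-4})$ and using that this function of $u$ is $1$-Lipschitz on $[-1,1]$. Your explicit restriction to $D_t\ge 0$ and the remark about the harmless extra $O(|\temp|n^{-2})$ when $\beta>1$ only make explicit what the paper leaves implicit, so no gap remains.
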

\begin{proof}
By definition (recall \eqref{eq-conditional-exp-S}), we have
\begin{align*}\E[ D_{t+1}-D_t \mid D_t] &= \E[ S_{t+1}-S_{t} + \tS_t - \tS_{t+1} \mid D_t] \\
&= \frac{\tS_t-S_t}{n} + \left[\varphi(S_t) - \varphi(\tS_t)\right] - \left[ \psi(S_t) - \psi(\tS_t)\right]~.
\end{align*}
The Mean Value Theorem implies that
\begin{align*}\varphi(S_t) - \varphi(\tS_t) \leq \frac{\beta}{n}(S_t - \tS_t)~,
\end{align*}
and applying Taylor expansions on $\tanh(x)$ around $\beta S_t$ and $\beta \tS_t$, we deduce that
\begin{align*}\psi(S_t) - \psi(\tS_t) =
 \frac{S_t}{n^2\cosh^2(\beta S_t)}  - \frac{\tS_t}{n^2\cosh^2(\beta \tS_t)} + O\Big(\frac{1}{n^4}\Big)~.
\end{align*}
Since the derivative of the function $x / \cosh^2(\beta x)$ is bounded by $1$, another application of the Mean
Value Theorem gives
\begin{align*}\left|\psi(S_t) - \psi(\tS_t) \right| \leq
\frac{|S_t-\tS_t|}{n^2} + O\Big(\frac{1}{n^4}\Big)~.
\end{align*}
Altogether, we obtain \eqref{eq-contraction}, as required.
%\begin{align*}
%\E[ D_{t+1}-D_t \mid D_t ] \leq - \frac{\temp}{n}D_t + \frac{|D_t|}{n^2} + O(n^{-4})~,
%\end{align*}
%as required. The special case of $\beta$ then follows by simple algebra.
\end{proof}
Indeed, the above lemma ensures that in the high temperature regime, $\beta=1-\temp$ where $\temp > 0$, the magnetization chain is contracting:
\begin{align}\label{eq-high-temp-contraction}
\E\left[|D_{t+1}|\given D_t\right] \leq \Big(1-\frac{\temp}{2n}\Big)|D_t|~\mbox{ for any sufficiently large $n$}~.
\end{align}
We are now ready to prove that hitting near $0$ essentially ensures the mixing of the magnetization.
\begin{lemma}
  \label{lem-mag-coalesce-from-0}
  Let $\beta = 1-\delta$ for $\delta >0$ with $\delta^2 n\to\infty$, $(X_t)$ and $(\tX_t)$ be two instances of the dynamics started from arbitrary states $\sigma_0$ and $\tilde{\sigma}_0$ respectively, and $(S_t)$ and $(\tS_t)$ be their corresponding magnetization chains. Let $\taumag$ denote the coalescence time
  $\taumag := \min\{t: S_t = \tS_t\}$, and $t_n(\gamma)$ be as defined in Theorem \ref{thm-tau-0}.
Then there exists some constant $c>0$ such that \begin{equation}\label{eq-mag-coalescene-bound}
  \P\left(\taumag > t_n(3\gamma)\right) \leq c/\sqrt{\gamma}~\mbox{ for all $\gamma > 0$}~.\end{equation}
\end{lemma}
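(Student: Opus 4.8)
The plan is to combine the hitting-time estimate of Theorem~\ref{thm-tau-0} with the coalescence mechanism of Lemma~\ref{lem-taumag-tau-0}, using the contraction estimate \eqref{eq-high-temp-contraction} to handle the gap between ``the magnetization hits near $0$'' and ``the two magnetization chains actually coalesce.'' First, apply Theorem~\ref{thm-tau-0} to the chain $(S_t)$ started from $\sigma_0$ and also to $(\tS_t)$ started from $\tilde\sigma_0$: for any $\gamma>0$, with probability at least $1 - c/\sqrt{\gamma}$ each of $\tau_0$ and $\tilde\tau_0$ is at most $t_n(\gamma)$. In particular, taking the starting state $s_0=1$ shows $\P_1(\tau_0 \geq t_n(\gamma)) \leq c/\sqrt{\gamma}$, so the hypothesis of Lemma~\ref{lem-taumag-tau-0} is met with $T = t_n(\gamma)$ and $\epsilon$ of order $1/\sqrt{\gamma}$.

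The cleanest route is then essentially to invoke Lemma~\ref{lem-taumag-tau-0} directly: it already packages the monotone/reflection coupling argument (drive the larger chain down past the smaller one, merge absolute values with probability bounded below by a constant $b$, then fix up the sign ambiguity by the symmetry coupling $S_t=-\tS_t$, which forces coalescence once $|S_t|\le 1/n$), and it converts the bound $\P_1(\tau_0 \geq T)<\epsilon$ into a coalescence-within-$c_\epsilon T$ statement. Here I would run that argument with $T=t_n(\gamma)$: after each ``attempt'' of length $O(t_n(\gamma))$ the two chains have coalesced with probability at least $(1-\epsilon)b^2$, and repeating $O(1)$ times boosts the success probability. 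Since $t_n(3\gamma)/t_n(\gamma)$ is bounded below away from $0$ for the subcritical regime $\temp^2 n\to\infty$ (the window term $(\gamma+3)n/\temp$ is linear in $\gamma$ and the leading term $\frac{n}{2\temp}\log(\temp^2 n)$ is $\gamma$-independent, so inflating $\gamma$ to $3\gamma$ only costs a constant factor in the window, hence a $1+o(1)$ factor overall), a constant number of repetitions of length $\asymp t_n(\gamma)$ fits inside $t_n(3\gamma)$. Tracking the constants, one gets $\P(\taumag > t_n(3\gamma)) \leq c/\sqrt{\gamma}$ as claimed.

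The one point needing care — and the likely main obstacle — is the bookkeeping of how the failure probability accumulates over the $O(1)$ repeated attempts while keeping the final bound of the clean form $c/\sqrt{\gamma}$ rather than something like $(c/\sqrt{\gamma})^{k}$ summed with geometric overhead: each restart costs another factor, and one must check that the renewal argument (conditioning on non-coalescence, the chains are again in \emph{some} pair of states, and the estimate $\P_\sigma(\tau_0\ge T)\le \P_1(\tau_0\ge T)$ from Lemma~\ref{lem-taumag-tau-0} applies uniformly regardless of where they restarted) indeed yields geometric decay in the number of attempts, so that after a number of attempts growing like $\log(1/\gamma)$—or simply a fixed number, at the cost of replacing $\sqrt\gamma$ by a fixed power—the residual probability is absorbed into $c/\sqrt{\gamma}$. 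A mild secondary nuisance is parity: $\tau_0$ is defined via $|S_t|\le n^{-1}$, so for odd $n$ the chains meet at $\pm 1/n$ and the extra constant-probability step in Lemma~\ref{lem-taumag-tau-0} that merges them is exactly what handles this; for even $n$ one reaches $0$ and coalescence of the absolute values is immediate once the sign is reconciled. Neither issue is serious, but they are where the constants in \eqref{eq-mag-coalescene-bound} are actually pinned down.
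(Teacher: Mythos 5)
There is a genuine gap, and it is exactly at the step you flagged as only a ``bookkeeping'' issue. Lemma~\ref{lem-taumag-tau-0} converts $\P_1(\tau_0\geq T)<\epsilon$ into coalescence within $c_\epsilon T$ steps, i.e.\ a \emph{multiplicative} loss in time. But the statement you must prove is an \emph{additive}-window statement: $t_n(3\gamma)=t_n(\gamma)+2\gamma\frac{n}{\temp}$, while $t_n(\gamma)\sim\frac{n}{2\temp}\log(\temp^2 n)$ and $\frac{n}{\temp}\log(\temp^2 n)\gg \gamma\frac{n}{\temp}$ for fixed $\gamma$ as $\temp^2 n\to\infty$. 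So $c_\epsilon\, t_n(\gamma)$ exceeds $t_n(3\gamma)$ for every $c_\epsilon>1$ and large $n$; even two repetitions of length $t_n(\gamma)$ do not fit inside $t_n(3\gamma)$. Your own parenthetical observation that $t_n(3\gamma)=(1+o(1))\,t_n(\gamma)$ contradicts the conclusion you draw from it (``a constant number of repetitions of length $\asymp t_n(\gamma)$ fits inside $t_n(3\gamma)$''): a $1+o(1)$ ratio leaves room for one attempt only, and a single application of Lemma~\ref{lem-taumag-tau-0} does not even give coalescence within $t_n(\gamma)$ plus an $O(\gamma n/\temp)$ window, since its internal argument waits for a second visit to $0$ after the reflection step. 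The sharpness being lost here is not cosmetic: this lemma is what makes the coalescence time sit inside the cutoff window $n/\temp$, which is the whole point of the upper bound \eqref{eq-sub-upperbound-mixing}; Lemma~\ref{lem-taumag-tau-0} is adequate only where a constant factor is harmless (the paper uses it in the critical window, Section~\ref{sec:critical-upper}).

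The paper's proof therefore takes a different, quantitative route that you would need to reproduce. At time $T=t_n(\gamma)$ it shows $|\E S_t|,|\E\tS_t|\leq 2/\sqrt{\temp n}$ for all $t\geq T$ (using the monotone decrease of $s_t$ and the parity discussion), and combines this with the variance bound of Lemma~\ref{lem-var-bound} via the contraction estimate \eqref{eq-high-temp-contraction} to get $\E|S_T-\tS_T|\leq 10/\sqrt{\temp n}$. Then, running the chains independently, $D_t=S_t-\tS_t$ is a supermartingale by Lemma~\ref{lem-contracting}, and Lemma~\ref{lem-supermatingale-positive} gives
$\P\bigl(\tau_1>t_n(2\gamma)\bigr)\leq c'/\sqrt{\gamma}$ for the hitting time $\tau_1$ of $\{0,-\frac{2}{n}\}$ by $D_t$; after $\tau_1$ a monotone coupling makes $D_t$ a non-negative supermartingale with drift at most $-\temp/n^2$, and the Optional Stopping Theorem yields $\P(\taumag-\tau_1\geq n/\temp)\leq c''/\gamma$. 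Adding these gives coalescence by $t_n(3\gamma)$ with probability $1-c/\sqrt{\gamma}$. None of this is captured by invoking Lemma~\ref{lem-taumag-tau-0} as a black box; the moment and supermartingale estimates at the specific time $T=t_n(\gamma)$ are the missing content of your proposal.
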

\begin{proof}
Set $T = t_n(\gamma)$. We claim that the following holds for large $n$:
\begin{equation}\label{eq-Est-bound-from-T}
|\E S_t| \leq \frac{2}{\sqrt{\temp n}}~\mbox{ and }~ |\E \tS_t| \leq \frac{2}{\sqrt{\temp n}}~\mbox{ for all $t \geq T$}~.
\end{equation}
To see this, first consider the case where $n$ is even. The above inequality then follows directly from  \eqref{eq-Es0-tau0-bound} and the
decreasing property of $s_t$ (see \eqref{eq-s+-decreasing}), combined with the fact that $\E_0 S_t = 0$ (and thus $\E S_t = 0$ for all $t \geq \tau_0$). In fact, in case $n$ is even, $|\E S_t|$ and $|\E \tS_t|$ are both at most $1/\sqrt{\temp n}$ for all $t \geq T$. For the case of $n$ odd (where there is no $0$ state for the magnetization chain, and  $\tau_0$ is the
hitting time to $\pm\frac{1}{n}$), a simple way to show that \eqref{eq-Est-bound-from-T} holds is to bound $|\E_{\frac{1}{n}} S_t|$. By definition, $P_M(\frac{1}{n}, \frac{1}{n}) \geq P_M
(\frac{1}{n}, -\frac{1}{n})$ (see \eqref{eq-magnet-transit}). Combined with the symmetry of the positive and negative parts of the magnetization
chain, one can then verify by induction that $P_M^t(\frac{1}{n}, \frac{k}{n}) \geq P_M^{t}(\frac{1}{n}, -\frac{k}{n})$ for any
odd $k>0$ and any $t$. Therefore, by symmetry as well as the fact that
$\E_{s_0} S_t \leq s_0$ for positive $s_0$, we conclude that $|\E_{\frac{1}{n}} S_t|$ is decreasing with $t$, and
thus is bounded by $\frac{1}{n}$. This implies that \eqref{eq-Est-bound-from-T} holds for odd $n$ as well.

Combining \eqref{eq-Est-bound-from-T} with the Cauchy-Schwartz inequality we obtain that for any $t \geq T$
$$ \E |S_t - \tS_t| \leq \E|S_t| + \E|\tS_t| \leq \sqrt{\var(S_t) + \frac{4}{\temp n}} + \sqrt{\var(\tS_t) +\frac{4}{\temp n}}~.$$
Now, combining Lemma \ref{lem-var-bound} and Lemma \ref{lem-contracting} (and in particular, \eqref{eq-high-temp-contraction}), we deduce that $\var{S_t} \leq
\frac{4}{\temp n}$, and plugging this into the above inequality gives
\begin{equation*}
  \E|S_t - \tS_t| \leq \frac{10}{\sqrt{\temp n}}\quad\mbox{ for any }t\geq T~.
\end{equation*}
We next wish to show that within $2\gamma n/\temp$ additional steps, $S_t$ and $\tS_t$ coalesce
with probability at least $1-c/\sqrt{\gamma}$ for some constant $c>0$.

Consider time $T$, and let $D_t \deq S_t - \tS_t$. Recall that we have already established that
\begin{equation}\label{eq-D-T-bound}
\E D_T \leq 10/\sqrt{\temp n}~,
\end{equation}
and assume without loss of generality that $D_T > 0$. We now run the magnetization chains $S_t$ and $\tS_t$ independently for $T \leq t \leq \tau_1$, where
$$\tau_1 \deq \min \left\{t\geq T: D_t \in \{0, \mbox{$-\frac{2}{n}$}\}\right\}~,$$
and let $\mathcal{F}_t$ be the $\sigma$-field generated by these two
chains up to time $t$. By Lemma \ref{lem-contracting}, we deduce that for sufficiently large values of $n$, if $D_t > 0$ then
\begin{align}\label{eq-D-t-decreasing}
\E[D_{t+1}-D_t \mid \F_t] \leq -\frac{\temp}{2n}D_t \leq 0~,
\end{align}
and $D_t$ is a supermartingale with respect to $\mathcal{F}_t$. Hence, so is
$$W_t \deq D_{T+t} \cdot \frac{n}{2} \one_{\{\tau_1 > t\}}~,$$
and it is easy to verify that $W_t$ satisfies the conditions of Lemma \ref{lem-supermatingale-positive}
(recall the upper bound on the holding probability of the magnetization chain, as well as the fact that at most one spin is updated at any given step). Therefore, for some constant $c > 0$,
\begin{align*}
\P\left(\tau_1 > t_n(2\gamma)\mid D_T\right)&=\P(W_0>0, W_1>0, \ldots, W_{t_n(2\gamma) - T} > 0 \mid D_T) \\
&\leq \frac{c n D_T}{\sqrt{\gamma n/\temp}}~.
\end{align*}
Taking expectation and plugging in \eqref{eq-D-T-bound}, we get that for some constant $c'$,
\begin{align}\label{eq-bound-tau1}
\P \left(\tau_1 > t_n(2\gamma)\right) \leq \frac{c'}{\sqrt{\gamma}}~.
\end{align}
From time $\tau_1$ and onward, we couple $S_t$ and $\tS_t$ using a monotone coupling, thus
$D_t$ becomes a non-negative supermartingale with $D_{\tau_1} \leq \frac{2}{n}$. By \eqref{eq-D-t-decreasing},
$$\E \left[D_{t+1}- D_t\mid \mathcal{F}_t\right] \leq -\frac{\temp}{n^{2}}~\mbox{ for $\tau_1\leq t < \taumag$}~,$$ and therefore, the Optional Stopping Theorem for non-negative supermartingales implies that, for some constant $c''$,
\begin{align}\label{eq-bound-taumag-tau1}
\P\left(\taumag - \tau_1 \geq n/\temp\right) \leq \frac{\E (\taumag - \tau_1)}{n/\temp} \leq \frac{c''}{\gamma}~.
\end{align}
Combining \eqref{eq-bound-tau1} and \eqref{eq-bound-taumag-tau1} we deduce that for some constant $c$,
$$\P\left(\taumag > t_n(3\gamma)\right) \leq \frac{c}{\sqrt{\gamma}}~,$$
completing the proof.
\end{proof}

\subsubsection{Lower bound}\label{sec:high-lower-mag}
We need to prove that the following statement holds for the distance of the magnetization at time $t$ from stationarity:
\begin{align}\label{eq-sub-lowerbound-mixing}
\lim_{\gamma \to\infty} \liminf_{n\to \infty} \;
d_n \left(\frac{1}{2}\cdot\frac{n}{\temp}\log (\temp^2 n) - \gamma
\frac{n}{\temp}\right) = 1~.
\end{align}
The idea is to show that, at time
$\frac{1}{2}\cdot\frac{n}{\temp}\log (\temp^2 n) - \gamma \frac{n}{\temp}$, the expected magnetization remains
large. Standard concentration inequalities will then imply that the magnetization will
typically be significantly far from $0$, unlike its stationary distribution.

To this end, we shall first analyze the third moment of the magnetization chain. Recalling the transition
rule \eqref{eq-magnet-transit} of $S_t$ under the notations \eqref{eq-def-p-+},\eqref{eq-def-p--}
\begin{align*}  p^{+}(s)
    & %= \frac{\mathrm{e}^{\beta s}}{\mathrm{e}^{\beta s} + \mathrm{e}^{-\beta s}}
    = \frac{1 + \tanh(\beta s )}{2}~,~
      p^{-}(s)
    %= \frac{\mathrm{e}^{-\beta s}}{\mathrm{e}^{\beta s} + \mathrm{e}^{-\beta s} }
    = \frac{1 - \tanh(\beta s)}{2}~,
\end{align*}
the following holds:
\begin{align}
\E &\left [ S_{t+1}^3 \mid S_t = s \right ] \nonumber\\
&= \frac{1+s}{2} p^{-}(s - n^{-1}) \left(s-\frac{2}{n}\right)^3+  \frac{1-s}{2} p^{+}(s + n^{-1}) \left(s+\frac{2}{n}\right)^3\nonumber\\
&\hspace{.5cm}+ \left(1 - \frac{1+s}{2} p^{-}(s - n^{-1}) - \frac{1-s}{2} p^{+}(s + n^{-1})\right) s^3 \nonumber\\
&= s^3 + \frac{6s^2}{n}\cdot \frac{1}{4}\Big(-2s + \tanh\left(\beta(s-n^{-1})\right)
+ \tanh\left(\beta(s+n^{-1})\right) \nonumber\\
&\hspace{.5cm} + s\left(\tanh\left(\beta(s-n^{-1})\right) - \tanh\left(\beta(s+n^{-1}\right)\right) \Big)
+c_1\frac{s}{n^2} + \frac{c_2}{n^3} ~.\label{eq-3rd-moment-1}
\end{align}
As $\tanh(x) \leq x$ for $x \geq 0$, for every $s > 0$ we get
\begin{align}
\E \left [ S_{t+1}^3 \mid S_t = s \right ] &\leq s^3 + \frac{3s^2}{2n}\left(-2s + \beta(s-n^{-1}) +
\beta(s+n^{-1})\right)
+c_1\frac{s}{n^2} + \frac{c_2}{n^3} \nonumber\\
&= s^3 - 3\frac{\temp}{n}s^3 + \frac{c_1}{n^2}s + \frac{c_2}{n^3}~.\label{eq-3rd-moment-2}
\end{align}
If $s = 0$, the above also holds, since in that case $|S_{t+1}|^3 \leq (2/n)^3$. Finally, by symmetry, if $s <
0$ then the distribution of $|S_{t+1}^3|=-S_{t+1}^3$ given $S_t=s$ is the same as that of
  $S_{t+1}^3$ given $S_t=|s|$, and altogether we get:
\begin{align*}
\E \left [ |S_{t+1}|^3 \mid S_t = s \right ] \leq  |s|^3 - 3\frac{\temp}{n}|s|^3 + \frac{c_1}{n^2}|s| +
\frac{c_2}{n^3}~.
\end{align*}
We deduce that
\begin{align}
  \E |S_{t+1}|^3 &\leq \E\left( |S_t|^3 - 3\frac{\temp}{n}|S_t|^3 +\frac{c_1}{n^2}|S_t| + \frac{c_2}{n^3} \right)
  \nonumber\\
  &\leq \left(1-\frac{3\temp}{n}\right)\E|S_t|^3 + \frac{c_1}{n^2}\E|S_t| + \frac{c_2}{n^3}~.\label{eq-3rd-moment-3}
\end{align}
Note that the following statement holds for the first moment of $S_t$:
\begin{align*}
&\E_{s_0} \left[|S_t|\right] \leq \sqrt{(\E_{s_0} S_t)^2 + \var_{s_0}(S_t)}\\
&\leq \sqrt{(s_t)^2 + \frac{16}{\temp n}} \leq  \left(1-\frac{\temp}{n}\right)^t|s_0| +
\frac{4}{\sqrt{\temp n}}~.\end{align*} Hence,
\begin{align*}
  \E_{s_0} |S_{t+1}|^3 &\leq \left(1-\frac{3\temp}{n}\right)\E_{s_0}|S_t|^3 +
  \frac{c_1}{n^2} \left(1-\frac{\temp}{n}\right)^t |s_0| + \frac{2}{n^2 \sqrt{\temp n}}
  + \frac{c_2}{n^3} \\
  &= \eta^3 \E_{s_0} |S_t|^3 + \eta^t\frac{c_1}{n^2}|s_0| + \frac{4}{n^2\sqrt{\temp n}} +\frac{c_2' \temp^2}{n^2}~,
\end{align*}
where $\eta = 1 - \temp/n$, and the extra error term involving $c_2'$ absorbs the change of coefficient of
$\E_{s_0} |S_t|^3$ and also the $1/n^3$ term. Iterating, we obtain
\begin{align}
  \E_{s_0} |S_{t+1}|^3 &\leq \eta^{3t} |s_0|^3 + \eta^t  \frac{c_1}{n^2}|s_0|\sum_{j=0}^t\eta ^{2j}
  + \left( \frac{c_1'}{n^2\sqrt{\temp n}}
  + \frac{c_2' \temp^2}{n^2}\right)\sum_{j=0}^t \eta^{3j}\nonumber \\
  &\leq \eta^{3t}|s_0|^3 + \eta^t \frac{c_1}{n^2}\cdot \frac{|s_0|}{1-\eta^2}
  +\left( \frac{c_1'}{n^2\sqrt{\temp n}}
  + \frac{c_2' \temp^2}{n^2}\right)\cdot\frac{1}{1-\eta^3} \nonumber \\
  &\leq \eta^{3t}|s_0|^3 + \eta^t \frac{c_1}{\temp n}|s_0|
  +\frac{c_1'}{(\temp n)^{3/2}}
  + \frac{c_2' \temp}{n}~.  \label{eq-3rd-moment-bound}
\end{align}
  Define $Z_t \deq |S_t|\eta^{-t}$, whence
  $Z_0 = |S_0| =|s_0|$. Recalling \eqref{eq-conditional-exp-S}, and combining the Taylor expansion of $\tanh(x)$ given in \eqref{eq-tanh-taylor} with the fact that $|\psi(s)| = O\left(s/n^2\right)$, we get that for $s>0$
  \begin{equation*}
   \E\left[|S_{t+1}|\given S_t =s\right]
      \geq \eta s - \frac{s^3}{2n} - \frac{s}{n^2}~.
  \end{equation*}
By symmetry, an analogous statement holds for $s<0$, and altogether we obtain that
  \begin{equation}\label{eq-lowerbound-moment-1}
    \E\left[|S_{t+1}|\given S_t \right]
      \geq \eta |S_t| - \frac{|S_t|^3}{2n} - \frac{|S_t|}{n^2}~.
  \end{equation}
\begin{remark*}Note that \eqref{eq-lowerbound-moment-1} in fact holds for any temperature, having followed from
the basic definition of the transition rule of $(S_t)$, rather than from any special properties that this chain
may have in the high temperature regime.\end{remark*}

Rearranging the terms and multiplying by $\eta^{-(t+1)}$, we obtain that
for any sufficiently large $n$,
\begin{align*}
  \E\Big[\Big(1-\frac{2}{n^{2}}\Big)Z_{t}-Z_{t+1} \mid S_t\Big] \leq \frac{1}{n}\eta^{-t}|S_t|^3~,
\end{align*}
where we used the fact that $\eta^{-1} \leq 2$. Taking expectation and plugging in \eqref{eq-3rd-moment-bound}, we deduce that
\begin{align}\label{eq-Zt-bound}
  \E_{s_0}\Big[\Big(1-\frac{2}{n^{2}}\Big)Z_t-Z_{t+1}\Big] \leq \frac{1}{n}\left(\eta^{2t}|s_0|^3
  + \frac{c_1}{\temp n}|s_0|
  +\eta^{-t}\left(\frac{c_1'}{(\temp n)^{3/2}}
  + \frac{c_2' \temp}{n}\right)\right).
\end{align}
Set $$\overline{t} = \frac{n}{2\temp} \log(\temp^2 n) - \gamma n/\temp~,$$ and notice that when $n$ is sufficiently
large, $\left(1-\frac{2}{n^{2}}\right)^{-(t+1)} \leq 2$ for any $t\leq \overline{t}$. Therefore,
multiplying \eqref{eq-Zt-bound} by $(1-\frac{2}{n^{2}})^{-(t+1)}$ and summing over gives:
\begin{align*}
|s_0| - 2\E_{s_0} Z_{\overline{t}}&\leq \frac{2|s_0|^3}{n(1-\eta^2)} + \overline{t} \frac{c_1}{\temp
n^2}|s_0|+\frac{2\eta^{-\overline{t}}}{n(1-\eta)}
 \left(\frac{c_1'}{(\temp n)^{3/2}} + \frac{c_2' \temp}{n}\right) \\
 &\leq \frac{2|s_0|^3}{\temp} + \frac{c_1 \log(\temp^2 n)}{2 \temp^2 n}|s_0| + \frac{c_1'}{\temp^{3/2} n } +
\frac{c_2}{\temp n^{3/2}}
  + \frac{c_2' \temp}{\sqrt{n}}\\
%\end{align*}
%\begin{align*}
%&  =   \frac{|s_0|^3}{\temp} + \frac{c_1 \log(\temp^2 n)}{2 \temp^2 n}|s_0|\\
%&\hspace{1cm}+ \sqrt{\temp}\left(\frac{c_1'}{\temp^2 n } + \frac{c_2}{(\temp n)^{3/2}}
 % + c_2'\sqrt{\frac{\temp}{n}}+O\left(\frac{\log(\temp^2 n)}{\temp^{3/2} n}\right)\right) \\
& =  \frac{2|s_0|^3}{\temp} + o(\sqrt{\temp}+ |s_0|)~,
\end{align*}
where the last inequality follows from the assumption $\temp^2 n \to \infty$. We now select $s_0 =
\sqrt{\temp}/3$, which gives
$$
  \sqrt{\temp}/3 - 2 \E_{s_0} Z_{\overline{t}} \leq 2\sqrt{\temp}/27 +o(\sqrt{\temp})~,
$$
and for a sufficiently large $n$ we get
\begin{equation*}
 \E_{s_0} Z_{\overline{t}} \geq  \sqrt{\temp}/9~.
\end{equation*}
Recalling the definition of $Z_t$, and using the well known fact that $(1-x) \geq \exp(-x/(1-x))$ for $0< x <
1$, we get that for a sufficiently large $n$,
\begin{equation}\label{eq-S-t-bar-bound}
 \E_{s_0} |S_{\overline{t}}| \geq \eta^{\overline{t}} \sqrt{\temp}/9
 \geq \frac{\mathrm{e}^{\gamma/2}}{10\sqrt{\temp n}} =: L ~.
\end{equation}
Lemma \ref{lem-var-bound} implies that $\max\{\var_{s_0}(S_t), \var_{\mu_n}(\tS_t)\} \leq 16/{\temp n}$. Therefore,
recalling that $\E_{\mu_n} \tS_{\overline{t}}=0$, Chebyshev's inequality gives
\begin{align*}
\P_{s_0}( |S_{\overline{t}}| \leq L/2 ) &\leq \P_{s_0}( \left||S_{\overline{t}}| - \E_{s_0} |S_{\overline{t}}|
\right| \geq L/2 )
\leq \frac{16/(\temp n)}{L^2/4}=c \mathrm{e}^{-\gamma}~, \\
\P_{\mu_n}( |\tS_{\overline{t}}| \geq L/2 ) &\leq \frac{16/(\temp n)}{L^2/4}=c \mathrm{e}^{-\gamma}~.
\end{align*}
Hence, letting $\pi$ denote the stationary distribution of $S_t$, and
 $A_L$ denote the set $\left[-\frac{L}{2}, \frac{L}{2}\right]$, we obtain that
  \begin{equation*}
    \| \P_{s_0}( S_{\overline{t}} \in \cdot ) - \pi \|_{{\rm TV}}
    \geq \pi(A_L)  - \P_{s_0}( |S_{\overline{t}}| \in A_L )
    \geq 1 - 2 c \mathrm{e}^{-\gamma}~,
  \end{equation*}
which immediately gives \eqref{eq-sub-lowerbound-mixing}. \qed

\subsection{Full Mixing of the Glauber dynamics}\label{sec:high-full-mixing}
In order to boost the mixing of the magnetization into the full mixing of the configurations, we will
need the following result, which was implicitly proved in \cite{LLP}*{Sections 3.3, 3.4} using a Two Coordinate
Chain analysis. Although the authors of \cite{LLP} were considering the case of $0 < \beta < 1$ fixed, one can
follow the same arguments and extend this result to any $\beta < 1$. Following is this generalization of their
result:
\begin{theorem}[\cite{LLP}]\label{thm-two-coord-chain-subc}
Let $(X_t)$ be an instance of the Glauber dynamics and $\mu_n$ the stationary distribution of the dynamics. Suppose $X_0$ is supported by
\begin{equation*}\label{eq-Omega-0-def}\Omega_0 \deq \{ \sigma \in \Omega : |S(\sigma)| \leq 1/2\}~.\end{equation*}
For any $\sigma_0 \in \Omega_0$ and $\tilde{\sigma} \in \Omega$, we consider the dynamics $(X_t)$ starting from
$\sigma_0$ and an additional Glauber dynamics $(\tilde{X}_t)$ starting from $\tilde{\sigma}$, and define:
\begin{align*}
&  \taumag \deq  \min\{t : S(X_t) = S(\tilde{X}_t)\}~,\\
& U(\sigma) \deq \left|\{i : \sigma(i) = \sigma_0(i) = 1\}\right|~,~ V(\sigma) \deq \left|\{i : \sigma(i) = \sigma_0(i) = -1\}\right|~,\\
 & \Xi \deq \left\{ \sigma: \min\{ U(\sigma), U(\sigma_0) - U(\sigma), V(\sigma), V(\sigma_0) - V(\sigma))  \} \geq n/20 \right\}~,\\
&  R(t)  \deq \left| U(X_t) - U(\tilde{X}_t) \right|~,\\
 & H_1(t) \deq \{ \taumag \leq t \}~,~ H_2(t_1,t_2) \deq \cap_{i=t_1}^{t_2} \{ X_i \in \Xi \wedge \tilde{X}_i \in \Xi \}~.
\end{align*}
Then for any possible coupling of $X_t$ and $\tX_t$, the following holds:
\begin{align}
\max_{\sigma_0 \in \Omega_0} &\| \P_{\sigma_0}(X_{r_2} \in \cdot) - \mu_n \|_\mathrm{TV} \leq\mathop{\max_{\sigma_0 \in
\Omega_0}}_{\tilde{\sigma}\in\Omega} \Big[\P_{\sigma_0,\tilde{\sigma}}
(\overline{H_1(r_1)}) \nonumber\\
& + \P_{\sigma_0,\tilde{\sigma}}(R_{r_1} > \alpha \sqrt{n/\temp}) + \P_{\sigma_0,\tilde{\sigma}}
(\overline{H_2(r_1,r_2)}) + \frac{\alpha c_1
}{\sqrt{r_2-r_1}}\cdot\sqrt{\frac{n}{\temp}}\Big]~,\label{eq-two-coordinate-chain-subc}
\end{align}
where $r_1 < r_2$ and $\alpha > 0$.
\end{theorem}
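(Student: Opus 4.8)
The plan is to reduce the total-variation bound to the failure probability of an explicit two-phase coupling and then dominate that probability by the four terms on the right of \eqref{eq-two-coordinate-chain-subc}. Fix $\sigma_0\in\Omega_0$. Writing $\mu_n=\sum_{\tilde\sigma}\mu_n(\tilde\sigma)\,\delta_{\tilde\sigma}$ and using the elementary coupling inequality, any coupling $(X_t,\tX_t)$ of the dynamics started from $\sigma_0$ and $\tilde\sigma$ gives $\|\P_{\sigma_0}(X_{r_2}\in\cdot)-\mu_n\|_{\mathrm{TV}}\le\max_{\tilde\sigma\in\Omega}\P_{\sigma_0,\tilde\sigma}(X_{r_2}\ne\tX_{r_2})$, so it suffices to build a coupling for which, with $\tau_R\deq\min\{t\ge r_1:R(t)=0\}$, one has $\{X_{r_2}\ne\tX_{r_2}\}\subseteq\overline{H_1(r_1)}\cup\{R_{r_1}>\alpha\sqrt{n/\temp}\}\cup\overline{H_2(r_1,r_2)}\cup\{\tau_R>r_2\}$, together with the estimate $\P\big(\{\tau_R>r_2\}\cap H_1(r_1)\cap\{R_{r_1}\le\alpha\sqrt{n/\temp}\}\cap H_2(r_1,r_2)\big)\le\alpha c_1\sqrt{n/\temp}/\sqrt{r_2-r_1}$.

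The coupling runs in two phases. For $0\le t\le r_1$ we couple the two magnetization chains --- for instance by the monotone coupling of Section~\ref{sec:monotone coupling} --- so that on $H_1(r_1)=\{\taumag\le r_1\}$ the magnetizations agree from time $\taumag$ onward, recording $R(t)=|U(X_t)-U(\tX_t)|$ along the way. Conditionally on $\taumag\le r_1$, from time $r_1$ on we switch to the coordinate-wise (``Two Coordinate Chain'') coupling of \cite{LLP}: at each step the same site is updated in both chains using a common uniform random variable, the update rule arranged so that the magnetizations stay coalesced and so that a discrepancy between $X_t$ and $\tX_t$ at the selected site is erased with probability $1-O(1/n)$. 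Once the magnetizations agree one has $U(X_t)-U(\tX_t)=V(X_t)-V(\tX_t)$, so the joint configuration is pinned down by the common magnetization together with the scalar $R(t)$; in particular $X_t=\tX_t$ as soon as $R(t)=0$, and from $\tau_R$ on the two chains are coupled to remain equal. Hence $\{X_{r_2}\ne\tX_{r_2}\}$ lies in the union displayed above.

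The heart of the matter is the behaviour of $(R(t))_{t\ge r_1}$. Following \cite{LLP}*{Sections~3.3--3.4}, one shows that on the event $H_2(r_1,r_2)$ that both $X_t$ and $\tX_t$ stay in the balanced set $\Xi$, the process $R(t)$ is, up to lower-order corrections, a non-negative supermartingale with increments bounded by an absolute constant $B=O(1)$ and with conditional variance bounded below by a positive constant $\sigma^2$ on $\{R(t)>0\}$: membership in $\Xi$ provides a constant fraction of sites in each of the four categories entering the definition of $\Xi$, which makes the discrepancy-erasing drift dominate the residual $O(1/n)$ upward drift while keeping the per-step fluctuations of $R$ of order $1$. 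A standard stopping argument localizes $R$ to $\Xi$, and Lemma~\ref{lem-supermatingale-positive}, applied with $k=R_{r_1}$, yields (once $r_2-r_1$ exceeds the harmless threshold $4B^2/(3\sigma^2)$) that on $H_1(r_1)\cap\{R_{r_1}\le\alpha\sqrt{n/\temp}\}\cap H_2(r_1,r_2)$,
\[
\P\big(\tau_R>r_2\big)\le\frac{4R_{r_1}}{\sigma\sqrt{r_2-r_1}}\le\frac{\alpha c_1}{\sqrt{r_2-r_1}}\sqrt{\frac n\temp}~.
\]
A union bound over the four events, then a maximum over $\sigma_0\in\Omega_0$ and $\tilde\sigma\in\Omega$, gives \eqref{eq-two-coordinate-chain-subc}.

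Everything except the analysis of $R(t)$ is formal; the real content --- and the single place where the passage from fixed $\beta<1$ in \cite{LLP} to $\beta=1-\temp$ with $\temp=\temp(n)\to0$ is not automatic --- is verifying that the supermartingale and variance estimates for $R$, together with their implicit constants, hold uniformly as $\temp\to0$. Since the magnetization chain now contracts only at rate $1-\Theta(\temp/n)$ (Lemma~\ref{lem-contracting}), the natural scale of $R$ at the end of the first phase inflates from $\sqrt n$ to $\sqrt{n/\temp}$; one must track how this rescaling propagates through the drift computation --- which is exactly what produces the threshold $\alpha\sqrt{n/\temp}$ and the error term $\alpha c_1\sqrt{n/\temp}/\sqrt{r_2-r_1}$ in the statement --- while checking that the per-step variance of $R$ on $\Xi$ remains of constant order, so that Lemma~\ref{lem-supermatingale-positive} still applies with $\sigma$ bounded away from $0$.
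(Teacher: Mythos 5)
The paper itself gives no proof of this theorem: it is quoted from \cite{LLP} (Sections 3.3--3.4), and the authors merely assert that the Two Coordinate Chain argument there extends to $\beta=1-\temp$ with $\temp=\temp(n)\to 0$. Measured against that intended argument, your outline has the right two-phase shape (couple the magnetizations by time $r_1$, then drive $R(t)$ to $0$ on $H_2$ via Lemma~\ref{lem-supermatingale-positive} with $k=R_{r_1}\le\alpha\sqrt{n/\temp}$, which is what produces the last term), but it contains a genuine gap at the decisive step. You claim that once $S(X_t)=S(\tX_t)$, the configuration is ``pinned down by the common magnetization together with $R(t)$'', so that $R(t)=0$ forces $X_t=\tX_t$. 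This is false: equality of the magnetizations and of $U$ (hence of $V$) only says the two configurations have the same \emph{number} of plus spins inside $B=\{i:\sigma_0(i)=1\}$ and inside $B^c$; which sites carry them can differ entirely (e.g.\ $\sigma_0=(+,+,-,-)$, $X_t=(+,-,+,-)$, $\tX_t=(-,+,-,+)$ have equal $S,U,V$ but $X_t\ne\tX_t$). Hence your inclusion of $\{X_{r_2}\ne\tX_{r_2}\}$ in the union of the four events fails, and the reduction of the total-variation distance to $\P(X_{r_2}\ne\tX_{r_2})$ cannot be closed. Insisting instead on genuine configuration coalescence after the counts agree would cost an additional coupon-collector time of order $n\log n$ (cf.\ Lemma~\ref{lem-fullmix-taumag}), which is not among the error terms of \eqref{eq-two-coordinate-chain-subc} and is too coarse for a cutoff window of size $n/\temp$ when $\temp$ is not small.

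The missing idea --- the actual content of the two coordinate chain method --- is an exchangeability step analogous to Lemma~\ref{lem-all-plus-mag-full}. Since $\sigma_0$ is constant on each of the blocks $B$ and $B^c$, and both the dynamics and $\mu_n$ are invariant under permutations of the sites, the law $\P_{\sigma_0}(X_t\in\cdot)$ conditioned on $(U(X_t),V(X_t))$ is uniform over the configurations with those block counts, and so is $\mu_n$ conditioned on $(U,V)$. Therefore $\| \P_{\sigma_0}(X_{r_2} \in \cdot) - \mu_n \|_\mathrm{TV}$ equals the total-variation distance of the projected two-coordinate chain $(U_t,V_t)$ from its stationary law, and it suffices to couple the \emph{projections}, i.e.\ to achieve $(U,V)(X_{r_2})=(U,V)(\tX_{r_2})$ --- which is exactly what $S$-agreement together with $R=0$ gives. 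With that identity in place, your supermartingale analysis of $R(t)$ on $H_2$ (with the scale $\sqrt{n/\temp}$ replacing the $\sqrt n$ of the fixed-$\beta$ case, and the drift and variance constants checked uniformly as $\temp\to0$) does deliver \eqref{eq-two-coordinate-chain-subc}. A lesser point: you prove the bound only for the particular coupling you construct, whereas the statement is asserted for an arbitrary coupling (the probabilities on the right are computed under that coupling), and it is applied in Section~\ref{sec:high-full-mixing} with the first-phase coupling of Lemma~\ref{lem-mag-coalesce-from-0}; your argument should be phrased so that the coupling up to time $r_1$ is arbitrary (and keeps the magnetizations together after $\taumag$), with only the post-$r_1$ two-coordinate coupling specified.
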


The rest of this subsection will be devoted to establishing a series of properties satisfied by the magnetization throughout the mildly subcritical case, in order to ultimately apply the above theorem.

First, we shall show that any instance of the Glauber dynamics concentrates on $\Omega_0$ once it
performs an initial burn-in period of $n/\temp$ steps. It suffices to show this for the dynamics started from $s_0=1$: to see this, consider a monotone-coupling
of the dynamics $(X_t)$ starting from an arbitrary configuration, together with two additional instances of the dynamics, $(X_t^+)$
starting from $s_0=1$ (from above) and $(X_t^-)$ starting from $s_0=-1$ (from below). By definition of the monotone-coupling, the chains $(X_t^+)$ and $(X_t^-)$ ``trap'' the chain $(X_t)$, and by symmetry it indeed remains to show that
$$\P_{1} (|S_{t_0}| \leq 1/2) = 1-o(1)~,\mbox{ where $t_0=n/\temp$}~.$$
Recalling \eqref{eq-st-one-step}, we have $s_{t+1} \leq (1 - \frac{\temp}{n}) s_t$ where
$s_t = \E\left[|S_{t}|\one_{\{\tau_0 > t\}}\right]$, thus
$$\E_1 \left[|S_{t_0}|\one_{\{\tau_0 > t_0\}}\right] \leq  \mathrm{e}^{-1}~.$$
Adding this to the fact that $\E_1 S_{t_0}\one\{\tau_0 \leq t_0\}=0$, which follows immediately from symmetry, we conclude that $\E_1 S_{t_0}\leq \mathrm{e}^{-1}$. Next, applying Lemma \ref{lem-var-bound} to our case and noting that
\eqref{eq.abscontr} holds for $\rho' = 1- \frac{1}{n}\big(1- n \tanh(\frac{\beta}{n})\big) \leq 1-\frac{\temp}{n}$, we conclude that
$$ \var(S_{t}) \leq \nu_1 \frac{n}{\temp} \leq \left(\frac{4}{n}\right)^2\frac{n}{\temp} = \frac{16}{\temp n}~\mbox{ for all $t$}~.$$
Hence, Chebyshev's inequality gives that $|S_{t_0}| \leq 1/2$ with high probability. We may therefore assume henceforth that our initial configuration already belongs to some good state $\sigma_0 \in \Omega_0$.

Next, set:
\begin{align*}
  T \deq t_n(\gamma)~,
  r_0 \deq t_n (2 \gamma)~,
  r_1 \deq t_n(3\gamma)~,
  r_2 \deq t_n(4\gamma)~.
\end{align*}
We will next bound the terms in the righthand side of \eqref{eq-two-coordinate-chain-subc} in order.
First, recall that Lemma \ref{lem-mag-coalesce-from-0} already provided us with a bound on the probability of $\overline{H_1(r_1)}$, by stating there for constant $c>0$
\begin{equation}
\label{eq-H1(r1)-bound}
\P(\taumag > r_1) \leq \frac{c}{\sqrt{\gamma}}~.
\end{equation}

Our next task is to provide an upper bound on $R_{r_1}$, and
namely, to show that it typically has order at most
$\sqrt{n/\temp}$. In order to obtain such a bound, we will analyze
the sum of the spins over the set $B \deq \{ i : \sigma_0(i) =
1\}$. Define $$M_t(B) \deq \frac{1}{2}\sum_{i\in B} X_t(i)~,$$ and
consider the monotone-coupling of $(X_t)$ with the chains
$(X_t^+)$ and $(X_t^-)$ starting from the all-plus and all-minus
positions respectively, such that $X_t^- \leq X_t \leq X_t^+$.
By defining $M_t^+$ and $ M_t^-$ accordingly, we get that
\begin{align*}
\E(M_t(B))^2 \leq \E (M_t^+(B))^2 + \E (M_t^-(B))^2 = 2\E
(M_t^+(B))^2~.
\end{align*}
By \eqref{eq-Est-bound-from-T}, we immediately get that for $t\geq
T$, $|\E M_t^+ (B)|\leq \sqrt{\frac{n}{\temp}}~.$ We will next
bound the variance of $M_t^+(B)$, by considering the following two cases:
\begin{enumeratei}
\item If every pair of spins of $X_t^+$ is positively correlated (since $X_0^+$ is the all-plus configuration, by symmetry, the covariances of each pair of spins is the same), then we can infer that $$\var (M_t^+(B)) \leq
\var\Big(\frac{1}{2}\sum_{i\in[n]}X_t^+(i)\Big) = \frac{n^2}{4}\var\left(S(X_t^+)\right)\leq \frac{4n}{\temp}~.$$
\item Otherwise, every pair of spins of $X_t^+$ is negatively correlated, and it
follows that $$\var (M_t^+(B)) \leq \sum_{i\in B
}\var\Big(\frac{1}{2}X_t^+(i)\Big) \leq \frac{n}{4}~.$$
\end{enumeratei}
Altogether, we conclude that for all $t \geq T$,
\begin{align}
  \E |M_{t}(B)| &\leq \sqrt{\E \left(M_{t}(B)\right)^2}
  %\leq \sqrt{2\E \left(M_{t}^+(B)\right)^2}
  \leq  \sqrt{2\var(M_t^+(B)+2\left(\E M_t(B)\right)^2}
  \nonumber\\  &\leq \sqrt{\frac{8n}{\temp} + \frac{2n}{\temp}} \leq 8 \sqrt{\frac{n}{\temp}}~.\label{eq-M-T-bound}
\end{align}
This immediately implies that
\begin{equation*}
\E R_{r_1}= \E|M_{r_1}(B) - \tilde{M}_{r_1}(B)|\leq\E|M_{r_1}(B)| +\E|\tilde{M}_{r_1}(B)|  \leq
16\sqrt{\frac{n}{\temp}}~,
\end{equation*}
and an application of Markov's inequality now gives
\begin{equation}\label{eq-R_r-bound}
\P (R_{r_1} \geq \alpha \sqrt{\frac{n}{\temp}}) \leq \frac{16}{\alpha}~.
\end{equation}

It remains to bound the probability of $\overline{H_2(r_1,r_2)}$. Define:
  \begin{align*}
    Y& \deq \sum_{r_1 \leq t \leq r_2} \one\{ |M_t(B)| > n/64 \} ~,
  \end{align*}
and notice that
\begin{equation*}
\P \bigg(\bigcup_{t=r_1}^{r_2} \left\{ |M_t(B)| \ge n/32 \right\}\bigg)
      \leq \P(Y > n/64) \leq \frac{c_0 \E [Y]}{n} ~.
\end{equation*}
Recall that the second inequality of \eqref{eq-M-T-bound} actually gives $\E |M_t(B)|^2 \leq \frac{5n}{\temp}$. Hence, a standard second moment argument gives
\begin{equation*}
  \P(|M_t(B)| > n/64) = O\left(\frac{1}{\temp n}\right)~.
\end{equation*}
Altogether, $\E_{\sigma_0}Y = O(1/\temp^{2})$ and
  \begin{equation*}
    \P_{\sigma_0} \bigg(\bigcup_{t=r_1}^{r_2} \left\{ |M_t(B)| \ge n/32 \right\}\bigg) = O\left(\frac{1}{\temp^2 n}\right)~.
  \end{equation*}
Applying an analogous argument to the chain $(\tilde{X}_t)$, we obtain that
  \begin{equation*}
    \P_{\tilde{\sigma}}\bigg(\bigcup_{t=r_1}^{r_2} \left\{ |\tilde{M}_t(B)| \ge n/32 \right\}\bigg) = O\left(\frac{1}{\temp^2 n}\right)~,
  \end{equation*}
and combining the last two inequalities, we conclude that
\begin{equation}\label{eq-H-2-bound}
\P_{\sigma_0,\tilde{\sigma}} \left(\overline{H_2(r_1,r_2)}\right) = O\left(\frac{1}{\temp^2 n}\right)~.
\end{equation}

Finally, we have established all the properties needed in order to apply Theorem \ref{thm-two-coord-chain-subc}. At the cost of a negligible number of burn-in steps, the state of $(X_t)$ with high probability belongs to $\Omega_0$. We may thus plug in \eqref{eq-H1(r1)-bound}, \eqref{eq-R_r-bound} and \eqref{eq-H-2-bound} into Theorem \ref{thm-two-coord-chain-subc}, choosing $\alpha = \sqrt{\gamma}$, to obtain \eqref{eq-sub-upperbound-mixing}.

\subsection{Spectral gap Analysis}\label{sec:high-spectral}
By Proposition \ref{prop-spetral-glauber-mag}, it suffices to determine the spectral gap of the magnetization
chain. The lower bound will follow from the next lemma of \cite{Chen} (see also \cite{LPW}*{Theorem 13.1}) along with the contraction properties of the magnetization chain.
\begin{lemma}[\cite{Chen}]\label{lem-spectra-contraction}
Suppose $\Omega$ is a metric space with distance $\rho$. Let P be a transition matrix for a Markov chain, not
necessarily reversible. Suppose there exists a constant $\theta < 1$ and for each $x, y \in \Omega$, there is a
coupling $(X_1, Y_1)$ of $P(x, \cdot)$ and $P(y, \cdot)$ satisfying
\begin{equation*}
\E_{x,y} (\rho(X_1, Y_1))\leq \theta \rho(x, y)~.
\end{equation*}
If $\lambda$ is an eigenvalue of $P$ different from $1$, then $|\lambda| \leq \theta$. In particular, the spectral gap satisfies $\gap \geq 1- \theta$.
\end{lemma}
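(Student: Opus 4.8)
The plan is to run the standard argument that a one-step Wasserstein contraction pins every nontrivial eigenvalue inside the disc of radius $\theta$. First I would fix an eigenvalue $\lambda\neq 1$ of $P$ together with a (possibly complex-valued) eigenfunction $f$, so that $Pf=\lambda f$. Since the constant functions are precisely the eigenfunctions for the eigenvalue $1$, the function $f$ is non-constant, and (the state space in every application being finite) its Lipschitz seminorm
\[
L(f)\deq\sup_{x\neq y}\frac{|f(x)-f(y)|}{\rho(x,y)}
\]
is finite and strictly positive.

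Next, for an arbitrary pair $x\neq y$ I would invoke the coupling $(X_1,Y_1)$ of $P(x,\cdot)$ and $P(y,\cdot)$ supplied by the hypothesis. Since $\E_{x,y}f(X_1)=(Pf)(x)=\lambda f(x)$ and likewise $\E_{x,y}f(Y_1)=\lambda f(y)$, the triangle inequality together with the definition of $L(f)$ and the contraction assumption give
\[
|\lambda|\,|f(x)-f(y)|=\bigl|\E_{x,y}[f(X_1)-f(Y_1)]\bigr|\leq\E_{x,y}|f(X_1)-f(Y_1)|\leq L(f)\,\E_{x,y}[\rho(X_1,Y_1)]\leq\theta\,L(f)\,\rho(x,y).
\]
Dividing by $\rho(x,y)$ and taking the supremum over all $x\neq y$ yields $|\lambda|\,L(f)\leq\theta\,L(f)$, and since $L(f)>0$ we conclude $|\lambda|\leq\theta$. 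As this holds for every eigenvalue other than $1$, the largest absolute value of a nontrivial eigenvalue is at most $\theta$, so by the definition of the spectral gap, $\gap\geq 1-\theta$.

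The only point requiring care — rather than a genuine obstacle — is that $P$ is not assumed reversible, so $f$ may fail to be real and the usual Dirichlet-form (variational) characterization of the gap is unavailable; the argument above circumvents this entirely by working with $|f(x)-f(y)|$ in $\mathbb{C}$ and never using self-adjointness. One should also keep in mind that $\E_{x,y}[\rho(X_1,Y_1)]$ refers to the particular coupling furnished by the hypothesis, and that only its existence is used, not any optimality. (If one wanted the statement on an infinite metric space one would additionally need to ensure $L(f)<\infty$, but in all uses in this paper $\Omega$ is finite and this is automatic.)
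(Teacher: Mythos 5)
Your argument is correct and is the standard proof of this lemma, which the paper itself only quotes from \cite{Chen} without proof (it is also Theorem 13.1 in \cite{LPW}): bounding $|\lambda|$ via the Lipschitz seminorm of a (possibly complex) eigenfunction together with the one-step contraction of the given coupling is exactly the intended route, and your remarks about non-reversibility and finiteness of $\Omega$ are apt. The only phrase to soften is that constant functions are \emph{precisely} the eigenfunctions for eigenvalue $1$ (that would require irreducibility, which is not assumed); all you actually need is the trivial direction that a nonzero constant eigenfunction forces $\lambda=1$, so any eigenfunction for $\lambda\neq 1$ is non-constant and hence $L(f)>0$.
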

Recalling \eqref{eq-high-temp-contraction}, the monotone coupling of $S_t$ and $\tS_t$ implies that
\begin{equation*}
\E_{s, \tilde{s}} \big|S_{1} - \tilde{S}_1\big| \leq \Big(1- \frac{\temp}{n} + o\Big(\frac{\temp}{n}\Big)\Big)\left| s-\tilde{s}\right|~.
\end{equation*}
Therefore, Lemma \ref{lem-spectra-contraction} ensures that $\gap \geq (1+ o(1)) \frac{\temp}{n}$.

It remains to show a matching upper bound on $\gap$, the spectral gap of the magnetization chain. Let $M$ be the transition kernel of this chain, and $\pi$ be its stationary distribution. Similar to our final argument in Proposition \ref{prop-spetral-glauber-mag} (recall \eqref{eq-dirichlet-form}), we apply the Dirichlet representation for the
spectral gap (as given in \cite{LPW}*{Lemma 13.7}) with respect to the function $f$ being the identity map
$\mathds{1}$ on the space of normalized magnetization, we obtain that
\begin{align}\label{eq-gap-dirichlet-high}
\gap \leq  \frac{\mathcal{E}(\mathds{1})} {\left<\mathds{1},\mathds{1}\right>_{\pi}}= \frac{
\left<(I-M)\mathds{1},\mathds{1}\right>_\pi}{\left<\mathds{1},\mathds{1}\right>_{\pi}}
%= 1 - \frac{
%\left<M\mathds{1},\mathds{1}\right>_\pi}{\left<\mathds{1},\mathds{1}\right>_{\pi}}
= 1 - \frac{\E_\pi \left[\E\left[S_t S_{t+1} \mid S_t\right]\right] }
{\E_\pi S_t^2 }~,
\end{align}
where $\E_\pi S_t^k$ is the $k$-th moment of the stationary magnetization chain $(S_t)$.
Recall \eqref{eq-lowerbound-moment-1} (where $\eta = 1-\frac{\temp}{n}$), and notice that the following slightly stronger inequality in fact holds:
\begin{align*}
\E\left[\operatorname{sign}(S_t) S_{t+1} \given S_t \right]
\geq \eta |S_t| - \frac{|S_t|^3}{2n} - \frac{|S_t|}{n^2}~.
\end{align*}
(to see this, one needs to apply the same argument that led to \eqref{eq-lowerbound-moment-1}, then verify the special cases $S_t\in\{0,\frac{1}{n}\}$). It thus follows that
\begin{align*}
\E\left[S_t S_{t+1} \given S_t \right]
\geq \eta S_t^2 - \frac{S_t^4}{2n} - \frac{S_t^2}{n^2}~,
\end{align*}
and plugging the above into \eqref{eq-gap-dirichlet-high} we get
\begin{align}\label{eq-sub-gap-upperbound}
\gap \leq \frac{\temp}{n} + \frac{1}{2n} \cdot\frac{\E_{\pi} S_t^4}{\E_{\pi} S_t^2} + \frac{1}{n^2}~.
\end{align}
In order to bound the second term in \eqref{eq-sub-gap-upperbound}, we need to give an upper bound for the fourth moment in terms of the second moment. The next argument is similar to the one used earlier to bound the third moment of the magnetization chain (see \eqref{eq-3rd-moment-1}), and hence will be described in a more concise manner.

{
\newcommand{\tanhplus}{h^+}
\newcommand{\tanhminus}{h^-}
For convenience, we use the abbreviations $\tanhplus \deq \tanh\left(\beta(s+n^{-1})\right)$ and $\tanhminus \deq \tanh\left(\beta(s-n^{-1})\right)$. By definition (see \eqref{eq-magnet-transit}) the following then holds:
\begin{align*}
\E [S_{t+1}^4 &\mid S_{t} = s]
=s^{4} + \frac{2}{n}s^3 \left(-2s + \tanhminus + \tanhplus + s\tanhminus- \tanhplus\right)\\
&+\frac{6}{n^2}s^{2} \left(2+ \tanhplus - \tanhminus - s\tanhminus + \tanhplus \right)\\
&+ \frac{8}{n^3}s^3 \left(-2s + \tanhminus + \tanhplus + s\tanhminus-\tanhplus\right)\\
&+ \frac{4}{n^4} \left(2+ \tanhplus - \tanhminus - s\tanhminus + \tanhplus \right)\\
&\qquad\qquad\leq \left(1- \frac{4\temp}{n}\right)s^4 + \frac{12}{n^2}s^2 + \frac{16}{n^4}~.
\end{align*}
Now, taking expectation and letting the $S_t$ be distributed according to $\pi$, we obtain that
$$ \E_\pi S_t^4 \leq \frac{3}{\temp n}\E_\pi S_t^2 + \frac{4}{\temp n^3}~.$$
Recalling that, as the spins are positively correlated, $\var_\pi (S_t) \geq \frac{1}{n}$, we get
\begin{equation}
\E_{\pi} S_t^4 \leq \left(3+\frac{4}{n}\right)\frac{\E_\pi S_t^2}{n\temp} ~.\label{eq-4th-moment}
\end{equation}
Plugging \eqref{eq-4th-moment} into \eqref{eq-sub-gap-upperbound}, we conclude that
$$\gap \leq \frac{\temp}{n}\left(1+ O\Big(\frac{1}{\temp^2 n}\Big)\right) = (1+ o(1))\frac{\temp}{n}~.$$
}

\section{The critical window}\label{sec:critical}
In this section we prove Theorem \ref{thm-critical-temp}, which establishes that the
critical window has a mixing-time of order $n^{3/2}$ without a cutoff, as well as a spectral-gap of order $n^{-3/2}$.

\subsection{Upper bound}\label{sec:critical-upper}
%Our argument in this case follows the arguments of \cite{LLP}, combined with Theorem \ref{thm-tau-0}.
Let $(X_t)$ denote the Glauber dynamics, started from an arbitrary
configuration $\sigma$, and let $(\tX_t)$ denote the dynamics started from the stationary distribution $\mu_n$. As usual, let $(S_t)$ and $(\tS_t)$ denote the (normalized) magnetization chains of $(X_t)$ and $(\tX_t)$ respectively.

Let $\epsilon > 0$. The case $\delta^2 n=O(1)$ of Theorem \ref{thm-tau-0} implies that, for a sufficiently large $\gamma > 0$, $\P_\sigma\left(\tau_0 \geq \gamma n^{3/2}\right) < \epsilon$.
Plugging this into Lemma \ref{lem-taumag-tau-0}, we deduce that there exists some $c_\epsilon > 0$, such that the chains $S_t$ and $\tS_t$ coalesce after at most $c_\epsilon n^{3/2}$ steps with probability at least $1-\epsilon$.

At this point, Lemma \ref{lem-fullmix-taumag} implies that $(X_t)$ and $(\tX_t)$ coalesce after at most $O(n^{3/2}) + O(n\log n) = O(n^{3/2})$ additional steps with probability arbitrarily close to $1$, as required.

\subsection{Lower bound}\label{sec:critical-lower}
Throughout this argument, recall that $\temp$ is possibly negative, yet satisfies $\temp^2 n = O(1)$. By
\eqref{eq-3rd-moment-3},
\begin{align*}
  \E |S_{t+1}|^3 &\leq \E\left( |S_t|^3 - 3\frac{\temp}{n}|S_t|^3 +\frac{c_1}{n^2}|S_t| + \frac{c_2}{n^3} \right)\\
  &\leq \left(1-\frac{3\temp}{n}\right)\E|S_t|^3 + \frac{c_1}{n^2}\E|S_t| + \frac{c_2}{n^3}~.
\end{align*}
Recalling Lemma \ref{lem-contracting}, and plugging the fact that $\temp = O(n^{-1/2})$ in \eqref{eq-contraction}, the following holds. If $S_t$ and $\tS_t$ are the magnetization chains corresponding to two instances of the Glauber dynamics, then for some constant $c>0$ and any sufficiently large $n$,
\begin{align}\label{eq-critical-temp-contraction}
\E_{s,\tilde{s}}|S_1 - \tS_1| \leq (1+ c n^{-3/2})|s-\tilde{s}|~.
\end{align}
Combining this with the extended form of Lemma \ref{lem-var-bound}, as given in \eqref{eq-var-bound-rho-geq-1}, we deduce that if $t \leq \epsilon n^{3/2}$ for some small fixed $\epsilon>0$, then
$\var_{s_0}S_t \leq 4t/n^2$. Therefore,
\begin{equation*} \E_{s_0} \left[|S_t|\right] \leq \sqrt{|\E_{s_0} S_t|^2 + \var_{s_0}S_t}\leq \left(1-\frac{\temp}{n}\right)^t|s_0| + \frac{2\sqrt{t}}{n}~.\end{equation*}
Therefore,
\begin{align*}
  \E_{s_0} |S_{t+1}|^3 &\leq \left(1-\frac{3\temp}{n}\right)\E_{s_0}|S_t|^3 +
  \frac{c_1}{n^2} \left(1-\frac{\temp}{n}\right)^t |s_0| + \frac{c_1'\sqrt{t}}{n^3} \\
  &\leq \eta^3 \E_{s_0} |S_t|^3 + \eta^t\frac{c_1}{n^2}|s_0| + \frac{c_1'\sqrt{t}}{n^3} ~,
\end{align*}
where again $\eta = 1 - \temp/n$. Iterating, we obtain
\begin{align}
  \E_{s_0} |S_{t+1}|^3 &\leq \eta^{3t} |s_0|^3 + \eta^t  \frac{c_1}{n^2}|s_0|\sum_{j=0}^t\eta ^{2j}
  + \frac{c_1'\sqrt{t}}{n^3} \sum_{j=0}^t \eta^{3j}\nonumber \\
  &\leq \eta^{3t}|s_0|^3 + \eta^t \frac{c_1}{n^2}\cdot \frac{\eta^{2t-1}-1}{\eta^2-1} |s_0|
  +\frac{c_1'\sqrt{t}}{n^3}\cdot \frac{\eta^{3t}-1}{\eta^3-1}\nonumber \\
  &\leq \eta^{3t}|s_0|^3 + \eta^t \frac{c_1}{n^2}\cdot 2t |s_0|
  +\frac{c_1'\sqrt{t}}{n^3}\cdot 3t~,
\end{align}
where the last inequality holds for sufficiently large $n$ and $t\leq \epsilon n^{3/2}$ with $\epsilon>0$ small enough
(such a choice ensures that $\eta^t$ will be suitably small). Define $Z_t \deq |S_t|\eta^{-t}$, whence
  $Z_0 = |S_0| =|s_0|$.  Applying \eqref{eq-lowerbound-moment-1} (recall that it holds for any temperature) and using the fact that $\eta^{-1} \leq 2$, we get
  \begin{equation*}
    \E[Z_{t+1} \mid S_t] \ge Z_t- \frac{1}{n}\left(\eta^{-t}|S_t|^3+O(1/n)\right),
  \end{equation*}
  for $n$ large enough, hence
\begin{align*}
  \E[Z_{t}-Z_{t+1} \mid S_t] \leq \frac{1}{n}\left(\eta^{-t}|S_t|^3 + O(1/n)\right)~.
\end{align*}
Taking expectation and plugging in \eqref{eq-3rd-moment-bound},
\begin{align}\label{eq-Zt-bound-critical}
  \E_{s_0}[Z_t-Z_{t+1}] \leq \frac{1}{n}\left(\eta^{2t}|s_0|^3 + \frac{c_2 t}{n^2} |s_0|
  +\eta^{-t}\frac{c_2' t^{3/2}}{n^3} + O(1/n)\right)~.
\end{align}
Set $\overline{t} = n^{3/2}/A^4$ for some large constant $A$ such that $\frac{1}{2}\leq \eta^{\overline{t}} \leq 2$. Summing
over \eqref{eq-Zt-bound-critical} we obtain that
\begin{align*}
  |s_0| - \E_{s_0} Z_{\overline{t}} &\leq \frac{1-\eta^{2\overline{t}}}{n(1-\eta^2)}|s_0|^3 + \overline{t}^2 \frac{c_2}{n^3}|s_0|
 +2\eta^{-\overline{t}}\cdot \overline{t}^{5/2}/n^4 +O(\overline{t}/n^2)\\
&  \leq \frac{2}{A^{4}} \sqrt{n}|s_0|^3 + \frac{c_2}{A^{8}} |s_0| + \frac{2}{A^{10}}\mathrm{e}^{ \sqrt{\temp^2 n}/A^4} n^{-1/4} +
O(n^{-1/2})~.
\end{align*}
We now select $s_0 = A n^{-1/4}$ for some large constant $A$; this gives
$$
  A n^{-1/4} - \E_{s_0} Z_{\overline{t}} \leq \left(\frac{2}{A} + \frac{c_2}{A^7}
 +  \frac{2}{A^{10}} \mathrm{e}^{\sqrt{\temp^2 n}/A^4}
 \right) n^{-1/4} + O(n^{-1/2})~.
$$
Choosing $A$ large enough to swallow the constant $c_2$ as well as the term $\temp^2 n$ (using the fact that $\temp^2 n$
is bounded), we obtain that
$$
 \E_{s_0} Z_{\overline{t}} \geq  \frac{1}{2}A n^{-1/4}~.
$$
Translating $Z_t$ back to $|S_t|$, we obtain
\begin{equation}\label{eq-S-t-bar-bound-2}
 \E_{s_0} |S_{\overline{t}}| \geq \eta^{\overline{t}} \cdot \frac{1}{2}An^{-1/4}
 \geq \sqrt{A} n ^{-1/4} =: B ~,
\end{equation}
provided that $A$ is sufficiently large (once again, using the fact that $\eta^{\overline{t}}$ is bounded, this
time from below). Since
\begin{equation}\label{eq-var-tbar-bound}\var_{s_0}(S_{\overline{t}}) \leq 16 \overline{t}/n^2 =
\frac{16}{A^4} n^{-1/2}~,\end{equation} the following concentration result on the stationary chain $(\tS_t)$ will
complete the proof:
\begin{equation}\label{eq-con-mu-critical}
\P_{\mu_n}(|\tS_{t}| \geq A n^{-1/4})\} \leq ~ \epsilon(A)~,\mbox{ and }\lim_{A\to\infty}\epsilon(A)=0~.\end{equation}
Indeed, combining the above two statements, Chebyshev's inequality implies that
  \begin{align}\label{eq-critical-tv-lowerbound}
    \| \P_{s_0}( S_{\overline{t}} \in \cdot ) - \pi \|_{{\rm TV}}
    &\geq \pi([-B/2,B/2])  - \P_{s_0}( |S_{\overline{t}}| \leq B/2 )\nonumber
    \\
    &\geq 1 - \frac{64}{A^5} - \epsilon(\sqrt{A})~.
  \end{align}
It remains to prove \eqref{eq-con-mu-critical}. Since we are proving a lower bound for the mixing-time, it
suffices to consider a sub-sequence of the $\temp_n$-s such that $\temp_n \sqrt{n}$ converges to some constant
(possibly 0). The following result establishes the limiting stationary distribution of the magnetization chain in this case.
\begin{theorem}
Suppose that $\lim_{n\to\infty}\temp_n \sqrt{n} = \alpha \in \mathbb{R}$. The following holds:
\begin{equation} \label{eq-norm-mag-convergence} \frac{S_{\mu_n}}{n^{-1/4}} \to
   \exp\left(-\frac{s^4}{12}- \alpha \frac{s^2}{2} \right)~.\end{equation}
\end{theorem}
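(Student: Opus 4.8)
The plan is to exploit the explicit product form of the stationary distribution of the Curie--Weiss model and then run a saddle-point (local limit theorem) computation. Since the magnetization chain is a lumping of the Glauber dynamics, its stationary distribution $\pi_n$ is the push-forward of $\mu_n$ under $\sigma\mapsto S(\sigma)$. Recalling \eqref{eq-mu(sigma)} together with the identity $\sum_{x<y}\sigma(x)\sigma(y)=\tfrac12(n^2 S(\sigma)^2-n)$, and absorbing every $s$-independent factor into the normalization, this yields
$$\pi_n(s)\ \propto\ \binom{n}{n(1+s)/2}\exp\!\Big(\tfrac{\beta n}{2}s^2\Big)~;$$
the same formula can alternatively be read off from the detailed-balance equations for the birth-and-death kernel \eqref{eq-magnet-transit}, using $p^{+}(x)/p^{-}(x)=\mathrm{e}^{2\beta x}$ from \eqref{eq-def-p-+}--\eqref{eq-def-p--}.

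Next I would invoke Stirling's formula in the form $\binom{n}{n(1+s)/2}=(1+o(1))\sqrt{2/(\pi n)}\,\exp\big(nH(\tfrac{1+s}{2})\big)$, where $H(p)=-p\log p-(1-p)\log(1-p)$, valid uniformly over $|s|=O(n^{-1/4})$ (indeed uniformly on compact subsets of $(-1,1)$); the prefactor is $s$-independent up to a $(1+o(1))$ factor and hence irrelevant after normalization. Expanding the entropy to fourth order and combining with the quadratic term,
$$H\big(\tfrac{1+s}{2}\big)+\tfrac{\beta}{2}s^2=\log 2-\tfrac{1-\beta}{2}s^2-\tfrac{s^4}{12}+O(s^6)~,$$
so that $\pi_n(s)\propto(1+o(1))\exp\big(-\tfrac{(1-\beta)n}{2}s^2-\tfrac{n}{12}s^4+nO(s^6)\big)$. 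Substituting $s=xn^{-1/4}$, the quadratic term becomes $-\tfrac12(1-\beta)\sqrt n\,x^2=-\tfrac12(\temp\sqrt n)x^2$ (recall that in this regime $\beta=1-\temp$ with $\temp$ possibly of either sign), the quartic term becomes exactly $-x^4/12$, and the remainder is $n\cdot O(n^{-3/2}x^6)=o(1)$ for fixed $x$. Since $\temp_n\sqrt n\to\alpha$ by hypothesis, the rescaled density converges pointwise to a constant multiple of $\exp(-\tfrac{\alpha}{2}s^2-\tfrac{s^4}{12})$, which is precisely \eqref{eq-norm-mag-convergence}.

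To promote the pointwise statement to convergence in distribution (equivalently, $L^1$-convergence of the densities) and in particular to deduce \eqref{eq-con-mu-critical}, I would exhibit a uniform integrable majorant. Because $\temp_n\sqrt n$ is a bounded sequence, there exist constants $C_0,C>0$ with $-\tfrac12(\temp_n\sqrt n)x^2-\tfrac{x^4}{12}\le-\tfrac{x^4}{24}$ for all $|x|\ge C_0$ and all large $n$, so the rescaled densities are dominated on $\{|x|\ge C_0\}$ by $C\mathrm{e}^{-x^4/24}$; combined with the pointwise convergence, Scheff\'e's lemma gives $L^1$-convergence of the normalized densities. In particular the normalizing constants converge, and $\P_{\mu_n}(|\tS_t|\ge A n^{-1/4})\to\int_{|x|\ge A}c\,\mathrm{e}^{-x^4/12-\alpha x^2/2}\,dx=:\epsilon(A)$, with $\epsilon(A)\to0$ as $A\to\infty$ since the limit is a probability measure. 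This is exactly \eqref{eq-con-mu-critical}, completing the lower bound of Theorem \ref{thm-critical-temp}.

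The main difficulty is bookkeeping rather than conceptual: making the Stirling and Taylor approximations uniform over the window $|s|=O(n^{-1/4})$, and, above all, justifying the interchange of the limit with the summation/integration. The latter is exactly where the critical-window hypothesis $\temp^2 n=O(1)$ is used---it is what keeps the Gaussian coefficient $\temp\sqrt n$ bounded, so the quadratic term can neither blow up nor be dominated non-uniformly, and the quartic term supplies a single integrable majorant valid for all $n$. Obtaining the precise constant $1/12$ also forces one to carry the entropy expansion out to fourth order.
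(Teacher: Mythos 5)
Your proof is correct, but it follows a genuinely different route from the paper. The paper treats the statement as an application of the general Ellis--Newman limit theorem (Theorem 3.9 of \cite{EN}): it takes $\rho$ to be the uniform measure on $\{-1,1\}$ and $\rho_n$ uniform on $\{-\beta_n,\beta_n\}$, checks that $G_\rho(s)=\frac{s^2}{2}-\log\cosh(s)=\frac{s^4}{12}-\frac{s^6}{45}+O(s^8)$ is pure with minimum at $m=0$, type $k=2$ and strength $2$, verifies $G^{(2)}_{\rho_n}(0)=1-\beta_n^2=\frac{2\alpha}{\sqrt n}+o(n^{-1/2})$, and reads off the limit $\exp(-\frac{s^4}{12}-\alpha\frac{s^2}{2})$ from the general conclusion. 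You instead exploit the fact that for the two-point spin measure the law of the magnetization is completely explicit, $\pi_n(s)\propto\binom{n}{n(1+s)/2}\exp(\frac{\beta n}{2}s^2)$, and carry out the local analysis by hand: Stirling, the entropy expansion $H(\frac{1+s}{2})=\log 2-\frac{s^2}{2}-\frac{s^4}{12}-O(s^6)$ (note the expansion is a series with all negative coefficients, so the bound $H(\frac{1+s}{2})\le\log 2-\frac{s^2}{2}-\frac{s^4}{12}$ holds on all of $[-1,1]$, which is exactly what makes your global majorant $C\mathrm{e}^{-x^4/24}$ legitimate up to $|x|\sim n^{1/4}$, not just on the window where Stirling is uniform), the substitution $s=xn^{-1/4}$, and Scheff\'e's lemma; your sign convention $\beta=1-\temp$ with $\temp$ of either sign matches the paper's usage in the critical regime, so the Gaussian coefficient $\temp\sqrt n\to\alpha$ comes out with the right sign. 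What each approach buys: yours is elementary and self-contained (no external black box, and it directly delivers the $L^1$/distributional convergence and hence \eqref{eq-con-mu-critical}), at the cost of the Stirling/uniformity and lattice-to-density bookkeeping you acknowledge; the paper's route is shorter on the page and sits inside a framework (Hubbard--Stratonovich based) that handles general single-site measures and the whole scaling window systematically, at the cost of importing a substantial theorem whose hypotheses must be verified.
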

\begin{proof}
We need the following theorem:
\begin{theorem}[\cite{EN}*{Theorem 3.9}]\label{thm-critical-magnetization}
Let $\rho$ denote some probability measure, and let $S_n(\rho) = \frac{1}{n}\sum_{j = 1}^n X_j(\rho)$, where the
$\{X_j(\rho):j\in[n]\}$ have joint distribution
$$\frac{1}{Z_n}\exp\left[\frac{\left(x_1+\ldots+x_n\right)^2}{2n}\right]\prod_{j=1}^n d\rho(x_j)~,$$
and $Z_n$ is a normalization constant. Suppose that $\{\rho_n : n=1,2,\ldots\}$ are measures satisfying
\begin{equation}\label{eq-rho-n-convergence-cond}
\exp(x^2/2)d \rho_n \to \exp(x^2/2)d\rho  ~.
\end{equation}
Suppose further that $\rho$ has the following properties:
\begin{enumerate}
  \item \emph{Pure}: the function $$G_\rho(s) \deq \frac{s^2}{2} - \log \int \mathrm{e}^{sx}d \rho(x)$$ has a unique global minimum.
  \item \emph{Centered at $m$}: let $m$ denote the location of the above global minimum.
  \item \emph{Strength $\temp$ and type $k$}: the parameters $k,\temp > 0$ are such that
  $$G_\rho(s) = G_\rho(m) + \temp \frac{(s-m)^{2k}}{(2k)!} + o((s-m)^{2k})~,$$ where the $o(\cdot)$-term
  tends to $0$ as $s\to m$.
\end{enumerate}
If, for some real numbers $\alpha_1,\ldots,\alpha_{2k-1}$ we have
  $$ G_{\rho_n}^{(j)}(m) = \frac{\alpha_j}{n^{1-j/2k}} + o(n^{-1+j/2k})~,\quad j=1,2,\ldots,2k-1~,n\to\infty~,$$
then the following holds:
\begin{equation*}
  S_n(\rho_n)\to\one_{\{s \neq m\}}
\end{equation*}
and
\begin{equation*}
  \frac{S_n(\rho_n)-m}{n^{-1/2k}} \to \left\{\begin{array}{cl}
    \displaystyle{N\left(-\frac{\alpha_1}{\temp},\frac{1}{\temp}-1\right)}, & \mbox{if }k=1~,\\
\noalign{\medskip}
   \displaystyle{\exp\left(-\temp \frac{s^2k}{(2k)!}-\sum_{j=1}^{2k-1}\alpha_j \frac{s^j}{j!} \right)}, & \mbox{if }k\geq 2~.\\
  \end{array}\right.~,
\end{equation*}
where $\temp^{-1}-1 > 0$ for $k=1$.
\end{theorem}
Let $\rho$ denote the two-point uniform measure on $\{-1,1\}$, and let $\rho_n$ denote the two-point uniform
measure on $\{-\beta_n,\beta_n\}$. As $|1-\beta_n| = \temp_n = O(1/\sqrt{n})$, the convergence requirement
\eqref{eq-rho-n-convergence-cond} of the measures $\rho_n$ is clearly satisfied. We proceed to verify the
properties of $\rho$:
\begin{align*}G_\rho(s) = \frac{s^2}{2}-\log \int \mathrm{e}^{sx} d \rho(x) =
\frac{s^2}{2} - \log \cosh(s)= \frac{s^4}{12}-\frac{s^6}{45} + O(s^8)~.
\end{align*}
This implies that $G_\rho$ has a unique global minimum at $m=0$, type $k=2$ and strength $\temp=2$. As $\temp_n
\sqrt{n} \to \alpha$, we deduce that the $G_{\rho_n}$-s satisfy
\begin{align*}G_{\rho_n} (s) &=\frac{s^2}{2} - \log \cosh(\beta_n s)~,\\
G^{(1)}_{\rho_n} (0) &= G^{(3)}_{\rho_n} (0) = 0~,\\
G^{(2)}_{\rho_n}(0) &= 1-\beta_n^2 = \temp_n(2-\temp_n) = \frac{2\alpha}{\sqrt{n}} + o(n^{-1/2})~.
\end{align*}
This completes the verification of the conditions of the theorem, and we obtain that
\begin{equation}\label{eq-S-convergence}
  \frac{S_n(\rho_n)}{n^{-1/4}} \to
   \exp\left(-\frac{s^4}{12}- \alpha \frac{s^2}{2} \right)~.
\end{equation}
Recalling that, if $x_i =\pm 1$ is the $i$-th spin,
\begin{equation} \label{eq.gibbs}
  \mu_n(x_1,\ldots,x_n)
    = \frac{1}{Z(\beta)}
      \exp\Big( \frac{\beta}{n}\sum_{1 \leq i < j \leq n}
        x_i x_j \Big),
\end{equation}
clearly $S_n(\rho_n)$ has the same distribution as $S_{\mu_n}$ for any $n$. This completes the proof of the
theorem.
\end{proof}

\begin{remark*}
One can verify that the above analysis of the mixing time in the critical window holds also for the censored dynamics (where the magnetization is restricted to be non-negative, by flipping all spins whenever it becomes negative). Indeed, the upper bound immediately holds as the censored dynamics is a function of the original Glauber dynamics. For the lower bound, notice that our argument tracked the absolute value of the magnetization chain, and hence can readily be applied to the censored case as-well. Altogether, the censored dynamics has a mixing time of order $n^{3/2}$ in the critical window $1\pm\temp$ for $\temp=O(1/\sqrt{n})$.
\end{remark*}

\subsection{Spectral gap analysis}\label{sec:critical-spectral}

The spectral gap bound in the critical temperature regime is obtained by combining the above analysis with results of \cite{DLP} on birth-and-death chains.

The lower bound on $\gap$ is a direct consequence of the fact that the mixing time has order $n^{3/2}$, and that the inequality $\trel \leq \tmix\big(\frac{1}{4}\big)$ always holds. It remains to prove the matching bound $\tmix\left(\frac{1}{4}\right) = O(\trel)$. Suppose that this is false, that is,
 $\trel = o\left(\tmix\big(\frac{1}{4}\big)\right)$.

 %We then study the magnetization chain, which is another birth-and-death chain, in the space of
%the normalized sum of the spins.
%We will use some of the results in Section 3 and for the sake of convenience, we recall several notations as
%follow.
Let $A$ be some large constant, and let $s_{0} = A n^{-1/4}$. %and $B= \sqrt{A} n^{-1/4}$.
Notice that the case $\temp^2 n = O(1)$ in Theorem \ref{thm-tau-0} implies that $\E_{1} \tau_{0} = O (n^{3/2})$. Furthermore, by Theorem \ref{thm-critical-magnetization}, there exists a strictly positive function of $A$,
$\epsilon(A)$, such that $\lim_{A\to\infty}\epsilon(A)=0$ and
$$\frac{1}{2}\epsilon(A)\leq \pi(S\geq s_{0}) \leq 2\epsilon(A) $$
for sufficiently large $n$. Applying Lemma \ref{lem-hitting-ratio} with $\alpha= \pi(S\geq s_0)$ and $\beta =
\frac{1}{2}$ gives $\E_{s_0} \tau_0 = o (n^{3/2})$. As in Subsection \ref{sec:critical-lower}, set $\bar{t} =
n^{3/2}/A^4$ for some large constant $A$. Combining Lemma \ref{lem-tv-hitting-bound} with Markov's inequality
gives the following total variation bound for this birth-and-death chain:
\begin{equation}\label{eq-tv-upperbound}
\|\P_{s_0}(S_{\bar{t}}\in \cdot) - \pi\|_{\mathrm{TV}} \leq 4 \epsilon(A) + o(1)~.
\end{equation}
However, the lower bound \eqref{eq-critical-tv-lowerbound} obtained in Subsection \ref{sec:critical-lower} implies that:
\begin{equation}\label{eq-tv-lowerbound}
\|\P_{s_0}(S_{\bar{t}}\in \cdot) - \pi\|_{\mathrm{TV}} \geq 1 - 4 \epsilon(\sqrt{A}/2) - 64 /A^5~.
\end{equation}
Choosing a sufficiently large constant $A$, \eqref{eq-tv-upperbound} and \eqref{eq-tv-lowerbound} together lead
to a contradiction for large $n$. We conclude that $\gap = O(n^{-3/2})$, completing the proof.

Note that, as the condition $\gap \cdot \tmix(\frac{1}{4})\to\infty$ is necessary for cutoff in any family of ergodic reversible finite Markov chains (see, e.g., \cite{DLP}), we immediately deduce that there is no cutoff in this regime.

\begin{remark*}
It is worth noting that the order of the spectral gap at $\beta_c=1$ follows from a simpler argument. Indeed, in that case, the \emph{upper bound} on $\gap$ can alternatively be derived from its Dirichlet representation, similar to the argument that appeared in the proof of Proposition \ref{prop-spetral-glauber-mag} (where we substitute the identity function, i.e., the sum of spins in the Dirichlet form).
For this argument, one needs a lower bound for the variance of the stationary magnetization. Such a bound is known for $\beta_c=1$ (see \cite{ENR}), rather than throughout the critical window.
\end{remark*}

\section{Low temperature regime}\label{sec:low}
In this section we prove Theorem \ref{thm-low-temp}, which establishes the order of the mixing time
and the spectral gap in the super critical regime (where the mixing of the dynamics is exponentially slow and there is no cutoff).

\subsection{Exponential mixing}
Recall that the normalized magnetization chain $S_t$ is a birth-and-death chain on the space $\magspace = \{-1, -1+\frac{2}{n},\ldots, 1-\frac{2}{n}, 1\}$, and for simplicity, assume throughout the proof that $n$ is even
(this is convenient since in this case we can refer to the $0$ state. Whenever $n$ is odd, the same proof holds
by letting $\frac{1}{n}$ take the role of the $0$ state).

The following notation will be useful. We define $$\magspace[a, b]\deq
\{x\in \magspace: a\leq x \leq b\}~,$$ and similarly define $\magspace (a, b)$, etc. accordingly.
For all $x\in\magspace$, let $p_x,q_x,h_x$ denote the transition probabilities of $S_t$ to the right, to the left and to
itself from the state $x$, that is:
\begin{align*}& p_x \deq P_M\left(x, x+\mbox{$\frac{2}{n}$}\right)= \frac{1-x}{2}\cdot\frac{1 + \tanh(\beta(x+n^{-1}))}{2}~,\\
&q_x \deq P_M\left(x, x-\mbox{$\frac{2}{n}$}\right) = \frac{1+x}{2}\cdot\frac{1 - \tanh(\beta(x-n^{-1}))}{2}~,\\
&h_x \deq P_M\left(x, x\right) = 1- p_x - q_x~.
\end{align*}
By well known results on birth-and-death chains (see, e.g., \cite{LPW}), the resistance $r_x$ and conductance
$c_x$ of the edge $(x, x+2/n)$, and the conductance $c'_x$ of the self-loop of vertex $x$ for $x\in
\magspace[0,1]$ are (the negative parts can be obtained immediately by symmetry)
\begin{equation}\label{eq-rk-resistence}
r_x = \prod_{y\in \magspace(0, x]} \frac{q_y}{p_{y}}~,~c_x = \prod_{y\in \magspace(0, x]}
\frac{p_y}{q_y}~,~ c'_x = \frac{h_x}{p_x + q_x} (c_{x-2/n} + c_x)~,
\end{equation}
and the commute-time between $x$ and $y$, $C_{x,y}$ for $x<y$ (the minimal time it takes the chain, starting
from $x$, to hit $y$ then return to $x$) satisfies
\begin{equation}
  \label{eq-commute-time-formula}
  \E C_{x,y} = 2 c_S R(x \leftrightarrow y)~,
\end{equation}
where
$$c_S \deq \sum_{x\in\magspace} (c_x +c'_x)\quad\hbox{and }\quad R(x \leftrightarrow y) \deq \sum_{z\in \magspace[x,
y)}r_z~.$$
Our first goal is to estimate the expected commute time between $0$ and $\zeta$. This is incorporated
in the next lemma.
\begin{lemma}
  \label{lem-commute-time}
  The expected commute time between $0$ and $\zeta$ has order \begin{equation}
    \label{eq-commute-time-estimate}
    \texp \deq \frac{n}{\temp}\exp\left(\frac{n}{2}\int_0^\zeta \log \frac{1+g(x)}{1-g(x)}\right)dx~,
  \end{equation}
  where $g(x)\deq\left(\tanh(\beta x)-x\right)/\left(1-x\tanh(\beta x)\right)$. In particular, in the special case $\temp\to 0$ we have $\E C_{0,\zeta} = \frac{n}{\temp}\exp \left((\frac{3}{4}+o(1))\temp^2 n\right)$, where the $o(1)$-term tends to $0$ as $n\to\infty$.
\end{lemma}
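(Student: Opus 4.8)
The plan is to read $\E C_{0,\zeta}$ off the commute-time identity \eqref{eq-commute-time-formula}, $\E C_{0,\zeta}=2c_S\,R(0\leftrightarrow\zeta)$, by estimating—up to universal constants—the total conductance $c_S=\sum_{x\in\magspace}(c_x+c_x')$ and the series resistance $R(0\leftrightarrow\zeta)=\sum_{z\in\magspace[0,\zeta)}r_z$ from \eqref{eq-rk-resistence}. Both are governed by the running product $\prod q_y/p_y$, so the starting point is a clean expression for $q_x/p_x$. Writing $1\pm\tanh u=2\mathrm{e}^{\pm u}/(\mathrm{e}^u+\mathrm{e}^{-u})$ in the definitions of $p_x,q_x$ gives
\[
  \frac{p_x}{q_x}=\frac{1-x}{1+x}\,\mathrm{e}^{2\beta x}\cdot\frac{\cosh(\beta(x-n^{-1}))}{\cosh(\beta(x+n^{-1}))}\,,
\]
and since $\tfrac{d}{du}\log\cosh u=\tanh u$ the last factor is $\mathrm{e}^{-(2\beta/n)\tanh(\beta x)+O(n^{-3})}$. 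A direct computation gives $\tfrac{1+g(x)}{1-g(x)}=\tfrac{(1-x)(1+\tanh(\beta x))}{(1+x)(1-\tanh(\beta x))}=\tfrac{1-x}{1+x}\mathrm{e}^{2\beta x}$, so that $\log\tfrac{p_x}{q_x}=\log\tfrac{1+g(x)}{1-g(x)}-\tfrac{2\beta}{n}\tanh(\beta x)+O(n^{-3})$.

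Summing this identity over the $\asymp n|x|/2$ mesh points $y\in\magspace(0,x]$ converts the main term into a Riemann sum for $\tfrac n2\int_0^x\log\tfrac{1+g}{1-g}$ (error $O(n^{-1})$, the integrand being smooth for $\beta=O(1)$), while the $\tanh$-term sums to $\beta\int_0^x\tanh(\beta t)\,dt+O(n^{-1})=\log\cosh(\beta x)+O(n^{-1})=O(1)$, uniformly even as $\temp\to0$. Hence, with $F(x)\deq\tfrac12\int_0^x\log\tfrac{1+g(t)}{1-g(t)}\,dt$, we get $\log c_x=-\log r_x=nF(x)+O(1)$ uniformly in $x$; moreover $c_x'\asymp c_x$, because \eqref{eq-holding-prob} forces $h_x/(p_x+q_x)\asymp1$ and $F(x)-F(x-\tfrac{2}{n})=O(n^{-1})$ gives $c_{x-2/n}+c_x\asymp c_x$. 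Next I would record the shape of $F$: it is even (as $g$ is odd), $F(0)=0$, and $F'(x)=\tfrac12\log\tfrac{1+g(x)}{1-g(x)}$ is positive on $(0,\zeta)$ and negative on $(\zeta,1)$ (because $\tanh(\beta t)>t$ precisely on $(0,\zeta)$ when $\beta>1$), so $F$ increases on $[0,\zeta]$ and attains its maximum over $\magspace$ at $\pm\zeta$. Quantitatively one needs the curvature at the two critical points: $F''(0)=g'(0)=\temp$, and using $g(t)=\temp t-\tfrac{\beta^3}{3}t^3+O(t^5)$ together with $\zeta^2=3\temp+O(\temp^2)$ one finds $F''(\zeta)=g'(\zeta)=-2\temp+O(\temp^2)$; in particular $|F''(0)|\asymp|F''(\zeta)|\asymp\temp$ and $\zeta\asymp\sqrt{\temp}$.

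With these in hand the two sums reduce, after comparison with the corresponding integrals (both integrands are unimodal on $[0,1]$, up to the reflection symmetry), to Laplace's method. For the resistance, $R(0\leftrightarrow\zeta)=\sum_{z\in\magspace[0,\zeta)}\mathrm{e}^{-nF(z)+O(1)}$ concentrates near $z=0$, where $F(z)=\tfrac{\temp}{2}z^2(1+o(1))$, giving $R(0\leftrightarrow\zeta)\asymp\tfrac n2\int_0^\infty\mathrm{e}^{-n\temp t^2/2}\,dt\asymp\sqrt{n/\temp}$. For the conductance, $c_S\asymp\sum_{x\in\magspace}\mathrm{e}^{nF(x)}$ concentrates within $O(1/\sqrt{n\temp})$ of $\pm\zeta$, giving $c_S\asymp n\int_0^1\mathrm{e}^{nF(t)}\,dt\asymp n\cdot\mathrm{e}^{nF(\zeta)}/\sqrt{n|F''(\zeta)|}\asymp\sqrt{n/\temp}\,\mathrm{e}^{nF(\zeta)}$. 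Multiplying, $\E C_{0,\zeta}=2c_S R(0\leftrightarrow\zeta)\asymp\tfrac{n}{\temp}\mathrm{e}^{nF(\zeta)}=\texp$. Finally, when $\temp\to0$, substituting $\zeta^2=3\temp+O(\temp^2)$ into $F(\zeta)=\tfrac{\temp}{2}\zeta^2-\tfrac{1}{12}\zeta^4+o(\temp^2)$ yields $F(\zeta)=\tfrac32\temp^2-\tfrac34\temp^2+o(\temp^2)=(\tfrac34+o(1))\temp^2$, hence $\texp=\tfrac{n}{\temp}\exp\!\big((\tfrac34+o(1))\temp^2 n\big)$.

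The main obstacle is making the Laplace estimates work \emph{uniformly} as $\temp\to0$. The peak of $\mathrm{e}^{nF(x)}$ at $\pm\zeta$ has width $\asymp(n\temp)^{-1/2}$, while the distance $\zeta\asymp\sqrt\temp$ to the origin—and the radius of validity of the quadratic approximation of $F$ near $\pm\zeta$—is of the larger order $\sqrt\temp$; likewise the flat region of $\mathrm{e}^{-nF(z)}$ near $0$ has width $\asymp(n\temp)^{-1/2}$ while $F(z)\approx\tfrac{\temp}{2}z^2$ is valid only for $z\ll\sqrt\temp$. For the quadratic approximations to control the entire effective peak and valley—so that the clean factor $\sqrt{n/\temp}$ emerges at both ends and the exponentially small tails are negligible relative to $\mathrm{e}^{nF(\zeta)}\sqrt{n/\temp}$—one needs exactly $(n\temp^2)^{-1/2}\to0$, the standing hypothesis $\temp^2 n\to\infty$. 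Since only the \emph{order} of $\E C_{0,\zeta}$ is claimed, all $n$- and $\temp$-independent constants are immaterial, and it is this slack that lets the crude two-sided estimates above suffice.
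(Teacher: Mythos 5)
Your overall route is the paper's route: you invoke the commute-time identity \eqref{eq-commute-time-formula}, estimate the total conductance $c_S\asymp\sqrt{n/\temp}\,\mathrm{e}^{nF(\zeta)}$ and the effective resistance $R(0\leftrightarrow\zeta)\asymp\sqrt{n/\temp}$, and multiply — exactly the content of Lemmas \ref{lem-order-conductance} and \ref{lem-order-resistance}. Your implementation differs in one layer: instead of the paper's ratio/geometric-series bounds on $c_{x+2/n}/c_x$ in the regions cut out by $\xi_1,\xi_2,\xi_3$, you extract the potential $F$ exactly (the identity $\tfrac{p_x}{q_x}=\tfrac{1+g(x)}{1-g(x)}\,\mathrm{e}^{-\frac{2\beta}{n}\tanh(\beta x)+O(n^{-3})}$ is correct, and the $\log\cosh(\beta x)=O(1)$ correction is a nice touch) and run Laplace's method at the critical points of $F$, using $F''(0)=\temp$, $|F''(\zeta)|\asymp\temp$, $\zeta\asymp\sqrt\temp$; the closing expansion $F(\zeta)=(\tfrac34+o(1))\temp^2$ matches the paper's claim. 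All of this checks out, and the resistance estimate (valley of width $(n\temp)^{-1/2}$ at $0$, tails controlled because the interval $[0,\zeta)$ has length only $\asymp\sqrt\temp$) is sound.

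There is, however, one step that fails as literally stated, precisely in the regime the theorem is designed to cover. For the conductance you dismiss everything outside an $O((n\temp)^{-1/2})$-window of $\pm\zeta$ as ``exponentially small tails negligible relative to $\sqrt{n/\temp}\,\mathrm{e}^{nF(\zeta)}$,'' with the implicit bound (number of terms, up to $n$, times the maximal term $\mathrm{e}^{nF(\zeta)-c\,n\temp^2}$). The resulting relative error is $\sqrt{n\temp}\,\mathrm{e}^{-c\,n\temp^2}$, which does \emph{not} tend to $0$ when $n\temp^2\to\infty$ more slowly than $\log n$ (e.g.\ $n\temp^2\asymp\log\log n$), so the hypothesis $\temp^2n\to\infty$ alone does not rescue the crude bound on the constant-length region $\magspace[\zeta+\Theta(\sqrt\temp),1]$ and its mirror. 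The fix is to use the continued decrease of $F$ there — $F(\zeta)-F(x)$ grows at least linearly in $x-\zeta$ at rate $\gtrsim\temp^{3/2}$, equivalently $c_{x+2/n}/c_x\le 1-\Theta(\sqrt{\temp/n})$ — which turns the tail into a geometric series of total $O(\sqrt{n/\temp})\cdot \mathrm{e}^{nF(\zeta)-c\,n\temp^2}$ and makes it negligible since $n\temp^2\to\infty$; this is exactly the paper's argument leading to \eqref{eq-K3-bound}. (Two smaller imprecisions are harmless but worth noting: $\log c_x=nF(x)+O(1)$ degrades to $O(\log n)$ within $O(1/n)$ of $\pm1$, and the pointwise claim $c_x'\asymp c_x$ fails near $\pm1$ where $p_x/q_x$ is not $1+O(1/n)$; only the summed bound $\sum_x c_x'=O(\sum_x c_x)$ is needed, and that holds since $c_x'\le C(c_{x-2/n}+c_x)$.)
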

\begin{remark*}
If $\zeta \not\in \magspace$, instead we simply choose a state in $\magspace$ which is the nearest possible to $\zeta$. For a sufficiently large $n$, such a negligible adjustment would keep our calculations and arguments in tact. For the convenience of notation, let $\zeta$ denote the mentioned state in this case as well.
\end{remark*}
To prove Lemma \ref{lem-commute-time}, we need the following two lemmas, which establish
the order of the total conductance and effective resistance respectively.
\begin{lemma}\label{lem-order-conductance}
The total conductance satisfies
$$c_S = \Theta\left(\sqrt{\frac{n}{\temp}}\exp\left(\frac{n}{2}\int_{0}^{\zeta} \log \left(\frac{1+g(x)}{1-g(x)}\right)dx \right)\right)~.$$
\end{lemma}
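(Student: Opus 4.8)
\emph{Overall plan.} The idea is to replace $c_S$ by a one‑sided sum of the conductances $c_x$, to identify $c_x$ up to a bounded factor with $\exp(F(x))$ where $F(x)\deq\frac n2\int_0^x\log\frac{1+g(t)}{1-g(t)}\,dt$, and then to evaluate $\sum_x e^{F(x)}$ by a Laplace‑type (Gaussian) summation around the unique maximizer $x=\zeta$ of $F$. First, by the symmetry of the magnetization chain it suffices to estimate $\sum_{x\in\magspace[0,1]}(c_x+c'_x)$. By \eqref{eq-holding-prob} the holding probability $h_x$ lies between $\frac12-O(n^{-1})$ and $\frac12(1+\tanh\beta)$, so for $\beta=O(1)$ both $h_x$ and $p_x+q_x=1-h_x$ are bounded away from $0$ and $1$; hence $c'_x=\frac{h_x}{p_x+q_x}(c_{x-2/n}+c_x)$ lies within a constant factor of $c_x+c_{x-2/n}$, and since $c_x/c_{x-2/n}=p_{x-2/n}/q_{x-2/n}$ is bounded, $c_S=\Theta\big(\sum_{x\in\magspace[0,1]}c_x\big)$.

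\emph{Identifying $c_x$ with $e^{F(x)}$.} Writing $\log c_x=\sum_{y\in\magspace(0,x]}\log(p_y/q_y)$, the starting point is the elementary identity $\frac{1+g(x)}{1-g(x)}=\frac{(1-x)(1+\tanh(\beta x))}{(1+x)(1-\tanh(\beta x))}$, which combined with $\tanh(\beta(y\pm n^{-1}))=\tanh(\beta y)\pm\frac\beta n(1-\tanh^2(\beta y))+O(n^{-2})$ gives $\log(p_y/q_y)=\log\frac{1+g(y)}{1-g(y)}+O(n^{-1})$. Summing over the $\Theta(xn)$ lattice points $y\in\magspace(0,x]$, the $O(n^{-1})$ corrections contribute $O(1)$ in total, and a routine Riemann‑sum estimate (the derivative $\tfrac{2g'}{1-g^2}$ of the integrand being bounded away from $x=1$) replaces $\sum_y\log\frac{1+g(y)}{1-g(y)}$ by $F(x)$ with $O(1)$ error; hence $c_x=\Theta(1)\,e^{F(x)}$ uniformly on $\magspace[0,1-\epsilon_0]$, while on $(1-\epsilon_0,1]$ one only needs the cruder bound $c_x\le n^{O(1)}e^{F(x)}$, which is harmless since there $F(x)\le F(\zeta)-\Omega(n)$.

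\emph{Laplace summation.} Since $F'(x)=\frac n2\log\frac{1+g(x)}{1-g(x)}$ has the sign of $g(x)$, $F$ is strictly increasing on $(0,\zeta)$ and strictly decreasing on $(\zeta,1)$; thus $\zeta$ is the unique maximizer and $F(\zeta)=\frac n2\int_0^\zeta\log\frac{1+g}{1-g}$. A direct computation gives $F''(\zeta)=n\,g'(\zeta)$ with $g'(\zeta)=\beta-\frac1{1-\zeta^2}<0$ (this negativity is precisely the stability of the fixed point $\tanh(\beta\zeta)=\zeta$), and one checks $|g'(\zeta)|=\Theta(\temp)$, with $|g'(\zeta)|=(2+o(1))\temp$ and $\zeta^2=(3+o(1))\temp$ when $\temp\to0$. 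Writing $x=\zeta+u$ one has $F(\zeta+u)-F(\zeta)=\tfrac12 F''(\zeta)u^2+O(n\zeta u^3+nu^4)$, so on the window $|u|=O((n\temp)^{-1/2})$ — which sits comfortably inside the range $|u|\ll\zeta$ where the quadratic term dominates, precisely because $\temp^2 n\to\infty$ — the cubic and quartic remainders are $o(1)$. Summing over $x=\zeta+2k/n$ then yields $\sum_{x\in\magspace[0,1]}e^{F(x)}=\Theta\big(e^{F(\zeta)}\sum_k e^{-\Theta(\temp)k^2/n}\big)=\Theta\big(e^{F(\zeta)}\sqrt{n/\temp}\big)$, the contributions from $|x-\zeta|\gtrsim\zeta$ being negligible since $F$ drops below $F(\zeta)$ by $\Omega(n\temp^2)$ there. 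Combining with the first two steps gives $c_S=\Theta\big(\sqrt{n/\temp}\,e^{F(\zeta)}\big)$, which is the claim.

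\emph{Main obstacle.} The crux is the Laplace step: turning the pointwise second‑order expansion of $F$ at $\zeta$ into honest two‑sided control of the lattice sum — bounding the cubic error on the window of width $\Theta(\tfrac1n\sqrt{n/\temp})$ (this is exactly where the hypothesis $\temp^2n\to\infty$ enters) and discarding the bulk away from $\zeta$ — and, in tandem, establishing $|g'(\zeta)|=\Theta(\temp)$ so that the prefactor comes out as $\sqrt{n/\temp}$ rather than $\sqrt{n/|g'(\zeta)|}$. The error bookkeeping in the second step (Taylor expansion plus Riemann sum, with the logarithmic endpoint singularity at $x=\pm1$) is routine by comparison.
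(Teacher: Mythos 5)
Your skeleton matches the paper's first two steps: reduce $c_S$ to $\Theta\big(\sum_x c_x\big)$ via the uniformly bounded holding probabilities, and identify $\log c_x$ with $F(x)\deq\frac n2\int_0^x\log\frac{1+g(t)}{1-g(t)}\,dt$ up to $O(1)$ via the $O(1/n)$ per-step errors and a Riemann-sum estimate (the paper only needs this at $x=\zeta$, together with $\log c_x\le\log c_\zeta+O(1)$ from the sign of $g$). Where you genuinely diverge is the summation step: the paper never expands $F$ to second order, but instead bounds the one-step ratios $c_{x+2/n}/c_x\ge 1+\sqrt{\temp/n}-O(1/n)$ on $\big[(\temp n)^{-1/2},\,\zeta-(\temp n)^{-1/2}\big]$ and $\le 1-\sqrt{\temp/n}+O(1/n)$ beyond $\zeta+(\temp n)^{-1/2}$, so everything outside a window of $\Theta(\sqrt{n/\temp})$ lattice points is dominated by geometric series, while $c_x=\Theta(c_\zeta)$ on that window gives the lower bound. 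Your Laplace-method computation, $F''(\zeta)=n\,g'(\zeta)$ with $g'(\zeta)=\beta-\frac1{1-\zeta^2}$ and $|g'(\zeta)|=\Theta(\temp)$, is correct and explains the $\sqrt{n/\temp}$ prefactor more transparently.

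However, there is a gap precisely at the point you flag as the crux: discarding the bulk. Your two-sided quadratic expansion is justified only on the window $|u|=O\big((n\temp)^{-1/2}\big)$, and for $|x-\zeta|\gtrsim\zeta$ your stated reason is that $F$ drops by $\Omega(n\temp^2)$. Combined with the $\Theta(n)$ lattice points to the right of $\zeta$, that only yields a tail bound of order $n\,\mathrm{e}^{-c\,n\temp^2}\mathrm{e}^{F(\zeta)}$, which is \emph{not} $O\big(\sqrt{n/\temp}\,\mathrm{e}^{F(\zeta)}\big)$ throughout the regime of the lemma: the hypothesis is only $\temp^2 n\to\infty$, and e.g.\ for $\temp^2 n=\log\log n$ one gets $\sqrt{n\temp}\,\mathrm{e}^{-c\,n\temp^2}\asymp(n\log\log n)^{1/4}(\log n)^{-c}\to\infty$. (To the left of $\zeta$ the same crude bound is fine, since there are only $O(n\zeta)$ points.) Also, the intermediate range $(n\temp)^{-1/2}\ll|x-\zeta|\ll\zeta$, where the cubic term is no longer $o(1)$ but merely dominated by the quadratic one, is only gestured at. To close this you need either (i) a global Gaussian domination $F(x)\le F(\zeta)-c\,\temp\, n\,(x-\zeta)^2$ on all of $[0,1]$ with a small constant $c$ --- this is true, but it is not a consequence of the local expansion, since at $|x-\zeta|\asymp\zeta$ the cubic term is comparable to the quadratic one, so it must be checked separately (e.g.\ via $F''\le -c\,\temp n$ on $|x-\zeta|\le c_0\zeta$ and a direct comparison of $F(\zeta)-F(x)$ with $\temp n(x-\zeta)^2$ beyond); or (ii) use the monotone decay of $g$ past $\zeta$, i.e.\ $F'(x)\le-\Omega(n\temp\zeta)$ for $x\ge(1+c_0)\zeta$, to sum the far tail as a geometric series --- which is essentially the device the paper uses and which handles all regimes of $\temp^2 n\to\infty$ uniformly. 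With either fix your argument goes through; as written, the upper-bound direction is not established when $\temp^2 n$ grows more slowly than $\log n$.
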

\begin{lemma}\label{lem-order-resistance}
The effective resistance between $0$ and $\zeta$ satisfies
$$R(0\leftrightarrow \zeta) = \Theta(\sqrt{n/\temp})~.$$
\end{lemma}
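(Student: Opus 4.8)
The plan is to compute $R(0\leftrightarrow\zeta)=\sum_{z\in\magspace[0,\zeta)}r_z$ directly from the product formula $r_z=\prod_{y\in\magspace(0,z]}(q_y/p_y)$. The first step is the algebraic identity
$$1\pm g(x)=\frac{(1\mp x)\big(1\pm\tanh(\beta x)\big)}{1-x\tanh(\beta x)}\,,\qquad\text{hence}\qquad \frac{1-g(x)}{1+g(x)}=\frac{(1+x)\big(1-\tanh(\beta x)\big)}{(1-x)\big(1+\tanh(\beta x)\big)}\,.$$
Comparing this with the definitions of $p_x,q_x$ and absorbing the $n^{-1}$-shifts inside the hyperbolic tangents via the Mean Value Theorem shows $q_y/p_y=\frac{1-g(y)}{1+g(y)}\big(1+O(1/n)\big)$ uniformly for $y\in\magspace(0,\zeta]$; since at most $O(n)$ such factors enter $r_z$, these distortions accumulate only to a bounded multiplicative factor. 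A Riemann sum estimate for the $C^1$ integrand $\log\frac{1+g}{1-g}$ (whose derivative is bounded uniformly in the regime because $\zeta<1$ and $g$ is valued in $[0,g_{\max}]$ with $g_{\max}$ bounded away from $1$) then gives
$$r_z=\exp\!\Big(-\tfrac n2\,\Phi(z)+O(1)\Big)\quad\text{uniformly for }z\in\magspace[0,\zeta),\qquad\text{where }\ \Phi(z)\deq\int_0^z\log\tfrac{1+g(x)}{1-g(x)}\,dx\,.$$

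Next I would record the shape of $\Phi$. On $(0,\zeta)$ one has $g>0$ (the outward drift of the magnetization), so $\Phi$ is strictly increasing and $r_z$ is decreasing; hence the sum defining $R(0\leftrightarrow\zeta)$ is controlled by small $z$. A Taylor expansion gives $g(x)=\temp x+O(x^3)$ with a universally bounded cubic coefficient, and, since $\zeta$ solves $\tanh(\beta\zeta)=\zeta$, one gets $\zeta\asymp\sqrt{\temp}$; combining these, $g(x)\asymp\temp x$ and therefore $\Phi(x)\asymp\temp x^2$ on an interval $[0,c\zeta]$ for a suitable universal $c>0$. On the complementary range $[c\zeta,\zeta)$, monotonicity of $\Phi$ gives $\Phi(z)\ge\Phi(c\zeta)\asymp\temp\zeta^2$, so $\tfrac n2\Phi(z)\gtrsim n\temp\zeta^2\asymp\temp^2 n\to\infty$.

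It then remains to sum the two ranges. The contribution of $[c\zeta,\zeta)$ is at most $O(n\zeta)\,e^{-c'\temp^2 n}$, which is $o(\sqrt{n/\temp})$ since $n\zeta\sqrt{\temp/n}=\sqrt{n\temp}\,\zeta\asymp\sqrt{\temp^2 n}$ while $\sqrt{\temp^2 n}\,e^{-c'\temp^2 n}\to0$. For the contribution of $[0,c\zeta]$, the bound $2u\le\log\frac{1+u}{1-u}\le\frac{2u}{1-g_{\max}^2}$ together with $\Phi(x)\asymp\temp x^2$ makes it comparable, both from above and below, to $\sum_{z\in\magspace[0,c\zeta]}e^{-\Theta(n\temp z^2)}$, which is in turn comparable to $\tfrac n2\int_0^{c\zeta}e^{-\Theta(n\temp x^2)}\,dx=\tfrac12\sqrt{n/\temp}\,\int_0^{c\zeta\sqrt{n\temp}}e^{-\Theta(u^2)}\,du$; since $c\zeta\sqrt{n\temp}\asymp\temp\sqrt n\to\infty$, the remaining integral is $\Theta(1)$, giving a contribution of order $\sqrt{n/\temp}$. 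Matching the resulting upper and lower bounds yields $R(0\leftrightarrow\zeta)=\Theta(\sqrt{n/\temp})$.

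I expect the main difficulty to be purely in the uniform bookkeeping of error terms across the whole regime $\temp^2 n\to\infty$: one must check that the $1+O(1/n)$ per-edge factors in $r_z$ cost only an absolute constant, and — the more delicate point — calibrate the threshold $c\zeta$ so that the quadratic approximation $\Phi(x)\asymp\temp x^2$ holds on $[0,c\zeta]$ while simultaneously $c\zeta\gg1/\sqrt{n\temp}$, so that the Gaussian integral above captures a constant fraction of its mass. Everything rests on the elementary estimate $\zeta\asymp\sqrt{\temp}$, which I would isolate as a preliminary fact about the positive root of $g$.
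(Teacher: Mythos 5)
Your argument is correct, but it takes a noticeably more computational route than the paper's. You derive the full exponential profile $r_z=\exp\left(-\tfrac n2\Phi(z)+O(1)\right)$ by Riemann summation of $\log\frac{1+g}{1-g}$ and then evaluate a Gaussian-type sum on $[0,c\zeta]$ (using the two-sided bound $g(x)\asymp\temp x$ there and $\zeta\asymp\sqrt{\temp}$), plus a crude tail bound on $[c\zeta,\zeta)$. The paper never needs the profile, nor two-sided control of $g$: for the upper bound it splits $\magspace[0,\zeta)$ at $\xi_1=1/\sqrt{\temp n}$ and $\xi_2=\zeta-1/\sqrt{\temp n}$, bounds the two end pieces simply by their cardinality $O(\sqrt{n/\temp})$ via the monotonicity $r_x\le r_0=1$ (valid since $q_x\le p_x$ on $[0,\zeta]$), and on the middle piece uses only the one-sided estimate $2g(x)\ge\sqrt{\temp/n}$ to get $r_{x+2/n}/r_x\le 1-\sqrt{\temp/n}-O(1/n)$, hence a geometric series summing to $O(\sqrt{n/\temp})$; for the lower bound it uses only $g(x)\le 2\temp x$ on $[0,\xi_1]$ to get $r_{\xi_1}\ge \mathrm{e}^{-3}$, so the $\Theta(\sqrt{n/\temp})$ grid points of $\magspace[0,\xi_1]$ each contribute $\Theta(1)$. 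Your route buys the precise resistance profile (mirroring the conductance computation of Lemma~\ref{lem-order-conductance}) but pays in exactly the uniformity issues you flag: the lower bound $g(x)\ge c'\temp x$ on $[0,c\zeta]$ and the relation $\zeta^2=O(\temp)$ must be calibrated uniformly over all admissible $\temp$ (under the standing assumption $\beta=O(1)$), e.g.\ via $g(x)=\temp x-\tfrac{\beta^3-3\beta\temp}{3}x^3+O(x^5)$, and the tail piece must indeed be counted by $O(n\zeta)$ points rather than $O(n)$ (as you correctly do) for it to be $o(\sqrt{n/\temp})$ when $\temp^2 n\to\infty$ arbitrarily slowly. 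Both proofs rest on the same identity $q_y/p_y=\frac{1-g(y)}{1+g(y)}\left(1+O(1/n)\right)$, so the difference is one of technique (Laplace-type evaluation versus monotonicity plus geometric decay), not of substance.
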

\begin{proof}[\textbf{\emph{Proof of Lemma \ref{lem-order-conductance}}}]
Notice that for any $x \in \magspace$, the holding probability $h_x$ is uniformly bounded from below, and thus
$c'_x$ can be uniformly bounded from above by $(c_x+c_{x-2/n})$. It therefore follows that $c_S = \Theta (\tilde{c}_S)$
where $\tilde{c}_S \deq \sum_{x\in \magspace} c_x$, and it remains to determine $\tilde{c}_S$. We first locate
the maximal edge conductance and determine its order, by means of classical analysis.
\begin{align}
  \log c_x &= \sum_{y\in \magspace(0,x]} \log \frac{p_y}{q_y} = \sum_{y\in \magspace(0,x]} \log \left(\frac{1-y}{1+y}\cdot \frac{1+\tanh(\beta(y+n^{-1}))}
  {1-\tanh(\beta(y- n^{-1}))}  \right)\nonumber\\
  &= \sum_{y\in \magspace(0,x]} \log \left(\frac{1+g(y)}{1-g(y)}+O(1/n)\right) =
  \sum_{y\in \magspace(0,x]}\log \left(\frac{1+g(y)}{1-g(y)}\right) + O(x)\label{eq-ck-estimate}
\end{align}
%Let $\xi_i$ denote the $i$-th summand above;
Note that $g(x)$ has a unique positive root at $x=\zeta$, and satisfies $g(x) > 0$ for $x\in(0,\zeta)$ and $g(x)
< 0$ for $x> \zeta$. Therefore,
\begin{equation*}
  \log c_x \leq \log c_{\zeta} + O(x) \leq \log c_{\zeta} + O(1) ~,
\end{equation*}
thus we move to estimate $c_{\zeta}$. As $\log c_{\zeta}$ is simply a Riemann sum (after an appropriate
rescaling), we deduce that
\begin{align*}
  \log c_{\zeta} &= \sum_{x\in \magspace(0, \zeta]} \log \left(\frac{1+g(x)}{1-g(x)}\right) + O(1)= \frac{n}{2}\int_{0}^{\zeta} \log \left(\frac{1+g(x)}{1-g(x)}\right)dx + O(1) ~,
\end{align*}
and therefore
\begin{align}
  c_{\zeta} &= \Theta\left( \exp\left(\frac{n}{2}\int_{0}^{\zeta} \log \left(\frac{1+g(x)}{1-g(x)}\right)dx \right)\right)~,\label{eq-c-zeta-bound}\\
  c_x &= O(c_{\zeta})~.\label{eq-ck-c-zeta-bound}
\end{align}
Next, consider the ratio $c_{x+2/n}/c_x$; whenever $x \leq \zeta$, $g(x) \geq 0$, hence we have
\begin{align*}\frac{c_{x+2/n}}{c_x} &= \frac{p_{x+2/n}}{q_{x+2/n}} \geq \frac{1+g(x)}{1-g(x)}-O(1/n) \geq 1 + 2g(x) - O(1/n)~.
\end{align*}
Whenever $ \frac{1}{\sqrt{\temp n}}  \leq  x \leq \zeta - \frac{1}{\sqrt{\temp n}}$ (using the Taylor expansions
around $0$ and around $\zeta$) we obtain that $\tanh(\beta x) - x \geq \frac{1}{2}\sqrt{\temp/n}$. Combining
this with the fact that $x\tanh(\beta x)$ is always non-negative, we obtain that for any such $x$, $2g(x) \geq
\sqrt{\temp/n}$. Therefore, setting
\begin{equation}
  \label{eq-K1-K2-def}
  \xi_1 \deq \sqrt{\frac{1}{\temp n}} ~,~\xi_2 \deq \zeta  - \sqrt{\frac{1}{ \temp n}}~,~
  \xi_3 \deq \zeta  + \sqrt{\frac{1}{\temp n}}~,
\end{equation}
we get
\begin{align}\label{eq-K1-K2-ratio-bound}\frac{c_{x+2/n}}{c_x} \geq 1 + \sqrt{\frac{\temp}{ n}}- O(1/n) ~\mbox{ for any }x \in \magspace [\xi_1, \xi_2]~.
\end{align}
Using the fact that $\temp^2 n \to\infty$, the sum of $c_x$-s in the above range is at most the sum of a
geometric series with a quotient $1/(1+\frac{1}{2}\sqrt{\temp/n})$ and an initial position $c_{\zeta}$:
\begin{align}\label{eq-K1-K2-bound}
  \sum_{x\in \magspace[\xi_1, \xi_2]} c_x \leq   3\sqrt{\frac{n}{\temp}}\cdot c_{\zeta}~.
\end{align}
We now treat $x \geq \xi_3$; since $g(\zeta) = 0$ and $g(x)$ is decreasing for any $x \geq \zeta$, then in
particular whenever $\zeta + \sqrt{\temp / n} \leq x \leq 1$ we have $-1 = g(1) \leq g(x) \leq 0$, and therefore
\begin{align*}\frac{c_{x+2/n}}{c_x} &= \frac{p_{x+2/n}}{q_{x+2/n}} \leq 1+g(x) + O(1/n) ~.
\end{align*}
Furthermore, for any $\zeta + \sqrt{\temp / n} \leq x \leq 1$ (using Taylor expansion around $\zeta$) we have
$\tanh(\beta x) - x \leq - \sqrt{\temp / n}$, and hence $g(x) \leq -\sqrt{\temp / n}$. We deduce that
\begin{align*}
\frac{c_{x+2/n}}{c_x} \leq 1 - \sqrt{\frac{\temp}{n}}+ O(1/n) ~\mbox{ for any }  x \in\magspace[\xi_3, 1]~,
\end{align*}
and therefore
\begin{align}\label{eq-K3-bound}
\sum_{x\in \magspace[\xi_3, 1]} c_x \leq  2\sqrt{\frac{n}{\temp}} \cdot c_{\zeta}~.
\end{align}
Combining \eqref{eq-K1-K2-bound} and \eqref{eq-K3-bound} together, and recalling \eqref{eq-ck-c-zeta-bound}, we
obtain that
\begin{align*}
  \tilde{c}_S = \sum_{x\in\magspace} c_x  \leq 2\left( |\magspace[0,\xi_1]| +5 \sqrt{n/\temp} + |\magspace[\xi_2,\xi_3]|\right) c_{\zeta}=O\left( \sqrt{\frac{n}{\temp}}c_{\zeta}\right)~.
\end{align*}
Finally, consider $x\in \magspace[\xi_2, \xi_3]$; an argument similar to the ones above (i.e., perform Taylor
expansion around $\zeta$ and bound the ratio of $c_{x+2/n}/c_x$) shows that $c_x$ is of order $c_{\zeta}$ in
this region. This implies that for some constant $b>0$
\begin{align}\label{eq-conductance-K2-K3-bound}
\tilde{c}_S \geq \sum_{x\in\magspace[\xi_2, \xi_2]} c_x \geq b |\magspace[\xi_2, \xi_3]|c_{\zeta} \geq
b \sqrt{\frac{n}{\temp}}c_{\zeta}~,
\end{align}
and altogether, plugging in \eqref{eq-c-zeta-bound}, we get
\begin{equation}
  \label{eq-cS-order}
  \tilde{c}_S = \Theta\left(\sqrt{\frac{n}{\temp}}\exp\left(\frac{n}{2}\int_{0}^{\zeta} \log \left(\frac{1+g(x)}{1-g(x)}\right)dx \right)\right)~.
\end{equation}
\end{proof}
\begin{proof}[\textbf{\emph{Proof of Lemma \ref{lem-order-resistance}}}]
Translating the conductances, as given in \eqref{eq-K1-K2-ratio-bound}, to resistances, we get
$$ \frac{r_{x+2/n}}{r_x} \leq 1 - \sqrt{\frac{\temp}{n}}- O(1/n) ~\mbox{ for any }x\in\magspace[\xi_1, \xi_2]~,$$
and hence
$$ \sum_{x\in\magspace[\xi_1, \xi_2]} r_x \leq r_{\xi_1} 2\sqrt{n/\temp} \leq 2\sqrt{n/\temp}~,$$
where in the last inequality we used the fact that $r_{x} \leq r_{x-2/n}~(\leq r_0 = 1)$ for all
$x\in\magspace[0,\zeta]$, which holds since $q_x \leq p_x$ for such $x$. Altogether, we have the following upper
bound:
\begin{align}
  R(0\leftrightarrow \zeta) &= \sum_{x\in\magspace[0,\xi_1]}r_x + \sum_{x\in\magspace[\xi_1, \xi_2]}r_x + \sum_{x\in\magspace[\xi_2, \zeta]}r_x\nonumber\\
&\leq |\magspace[0,\xi_1]| + 2\sqrt{\frac{n}{\temp}} + |\magspace[\xi_2, \zeta]| \leq 4\sqrt{n/\temp}~.
 \label{eq-R-0-zeta-upper-bound}
\end{align}
For a lower bound, consider $x\in\magspace[0,\xi_1]$. Clearly, for any $x \leq \frac{1}{\sqrt{\temp n}}$ we have
$g(x) = \frac{\tanh(\beta x-x)}{1-x\tanh(\beta x)} \leq 2\temp x$, and hence
$$ \frac{r_{x+2/n}}{r_x} = \frac{1-g(x)}{1+g(x)}+O(1/n) \geq 1- 5x \temp \geq \exp(-6x\temp)~,$$
yielding that
$$ r_{\xi_1} \geq \exp\left( -3\temp n \cdot \xi_1^2\right) \geq \mathrm{e}^{-3}~.$$
Altogether,
$$R(0 \leftrightarrow \zeta) \geq \mathrm{e}^{-3} |\magspace[0, \xi_1]|\geq \mathrm{e}^{-4} \sqrt{n/\temp}~,$$
and combining this with \eqref{eq-R-0-zeta-upper-bound} we deduce that $ R(0 \leftrightarrow \zeta) =
\Theta(\sqrt{n/\temp})$.
\end{proof}
\begin{proof}[\textbf{\emph{Proof of Lemma \ref{lem-commute-time}}}]
Plugging in the estimates for $\tilde{c}_S$ and $R(0 \leftrightarrow \zeta)$ in \eqref{eq-commute-time-formula},
we get
\begin{equation}\label{eq-commute-time-0-zeta} \E C_{0,\zeta} = \Theta\left(\frac{n}{\temp}\exp\left(\frac{n}{2}\int_{0}^{\zeta} \log \left(\frac{1+g(x)}{1-g(x)}\right)dx \right)\right)~.\end{equation}
This completes the proof of the lemma \ref{lem-commute-time}.
\end{proof}
Note that by symmetry, the expected hitting time from $\zeta$ to $-\zeta$ is exactly the expected commute time
between $0$ and $\zeta$. Hence,
\begin{align}\label{eq-hitting-zeta--zeta}
\E _{\zeta}[ \tau_{-\zeta} ]= \Theta (\texp)~.
\end{align}

%In order to infer the order of the hitting time from the commute time, we need the following lemmas.
%\begin{lemma}\label{lem-tau-pm-zeta} For this magnetization chain, let $\tau_{\pm\zeta} = \min{t: |S_t| \geq \zeta}$. We have:
%$$\E_{0} \tau_{\pm\zeta} = o (\texp)~.$$
%\end{lemma}
%\begin{proof}
%Observing that for $x \in \magspace[0, \zeta]$, $p_x \geq q_x$ and for $x \in \magspace[-\zeta, 0)$, $q_x \geq
%p_x$, as well as $h_x$ is bounded from above uniformly, we can conclude that $\tau_{\pm\zeta}$ of this
%magnetization chain is stochastically bounded by of $\tau_{\pm\zeta}$ a version of lazy simple random walk in
%the same space with holding probability $h < 1$. Therefore, we obtain that
%$$\E_0 \tau_{\pm\zeta} = O (n^2 \zeta^2)~.$$
%Next, for $\temp = \Omega(1)$, we know that $\texp = \Omega (\exp (c n))$ for some constant $c$, hence $\E_{0}
%\tau_{\pm\zeta} = o (\texp)$; for $\temp = o(1)$, we know that $\zeta = O(\temp)$ and $\texp \geq n/\temp \exp
%(1/2 \temp^2 n)$, which again gives $\E_{0} \tau_{\pm\zeta} = o (\texp)$. This completes the proof.
%\end{proof}
In order to show that the above hitting time is the leading order term in the mixing-time at low temperatures,
we need the following lemma, which addresses the order of the hitting time from 1 to $\zeta$.
\begin{lemma}\label{lem-tau-zeta-from-1}
The normalized magnetization chain $S_t$ in the low temperature regimes satisfies $\E_{1} \tau_{\zeta} = o (\texp)~.$
\end{lemma}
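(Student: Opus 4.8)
The plan is to compute $\E_1\tau_\zeta$ exactly via the birth‑and‑death hitting‑time formula and then estimate the resulting sum using the conductance bounds already in hand. Since the increments of $(S_t)$ are $\pm\frac2n$ and $\zeta\in\magspace$, the chain cannot pass $\zeta$ without hitting it, so $S_t>\zeta$ for every $t<\tau_\zeta$, and by the strong Markov property $\E_1\tau_\zeta=\sum_{x\in\magspace[\zeta,1)}\E_{x+2/n}\tau_x$. By the standard formula for the one‑step downward hitting time of a birth‑and‑death chain, $\E_{x+2/n}\tau_x=r_x\sum_{y\in\magspace[x+2/n,1]}(c_{y-2/n}+c_y+c'_y)$, with $r_x,c_y,c'_y$ as in \eqref{eq-rk-resistence}. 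Since $g<0$ on $(\zeta,1]$ the ratios $c_{y+2/n}/c_y=p_{y+2/n}/q_{y+2/n}$ are $<1$ there for large $n$ (cf.\ the proof of Lemma \ref{lem-order-conductance}), so $c_y$ is decreasing beyond $\zeta$; together with $c'_y\le C_\beta(c_{y-2/n}+c_y)$ for a constant $C_\beta$ this gives $\E_{x+2/n}\tau_x=\Theta(A_x)$, where $A_x:=\sum_{z\in\magspace[x,1]}\prod_{w\in\magspace(x,z]}\frac{p_w}{q_w}=\tfrac1{c_x}\sum_{z\in\magspace[x,1]}c_z$. Thus it suffices to show $\sum_{x\in\magspace[\zeta,1)}A_x=O\!\big(\tfrac n\temp\log(\temp^2n)\big)$.

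The next step is a pointwise bound on $A_x$. Using $\frac{p_w}{q_w}=\frac{1+g(w)}{1-g(w)}+O(\tfrac1n)$, the monotonicity of $g$ on $(\zeta,1]$, and the Taylor expansion of $g$ around $\zeta$ — with $g'(\zeta)=-(1-\beta\operatorname{sech}^2(\beta\zeta))$ and $1-\beta\operatorname{sech}^2(\beta\zeta)\asymp\temp$ when $\temp\to0$, exactly the type of computation used in the proof of Lemma \ref{lem-order-conductance} — the products $\prod_{w\in(x,z]}\frac{p_w}{q_w}$ decay geometrically in $z$, giving
\[
A_x=O\!\left(\tfrac1{|g(x)|}\right)\ \text{ for }x\ge\zeta+\tfrac1{\sqrt{\temp n}},\qquad A_x=O\!\left(\sqrt{n/\temp}\,\right)\ \text{ for all }x\ge\zeta,
\]
the second bound coming from the fact that the product over a window of length $\tfrac1{\sqrt{\temp n}}$ above $\zeta$ only drops by a bounded factor, so there are $\Theta(\sqrt{n/\temp})$ non‑negligible terms.

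I would then sum over three ranges. On the window $\magspace\big[\zeta,\zeta+\tfrac1{\sqrt{\temp n}}\big)$ there are $O(\sqrt{n/\temp})$ states, each contributing $O(\sqrt{n/\temp})$, for a total $O(n/\temp)$. On $\magspace\big[\zeta+\tfrac1{\sqrt{\temp n}},\zeta+c_0\big)$ for a small fixed $c_0>0$, where $|g(\zeta+u)|\asymp\temp u+u^3$, converting to an integral gives $\sum_xA_x=O(1)\cdot\tfrac n2\int_{1/\sqrt{\temp n}}^{c_0}\frac{du}{\temp u+u^3}=O\!\big(\tfrac n\temp\log(\temp^2n)\big)+O\!\big(\tfrac n\temp\big)$. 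On the bulk $\magspace[\zeta+c_0,1)$, $|g|$ is bounded below by a positive constant, so $A_x=O(1)$ and the contribution is $O(n)$. Altogether $\E_1\tau_\zeta=O\!\big(\tfrac n\temp\log(\temp^2n)\big)$. Finally, by Lemma \ref{lem-commute-time} one has $\texp\ge c\,\tfrac n\temp\,e^{c'\temp^2n}$ for constants $c,c'>0$; since $\temp^2n\to\infty$, $e^{c'\temp^2n}$ grows faster than $\log(\temp^2n)$, and hence $\E_1\tau_\zeta=o(\texp)$, as claimed. (When $\temp$ is bounded away from $0$, i.e.\ $\beta>1$ fixed, the same scheme gives the cruder but ample $\E_1\tau_\zeta=O(n\log n)$ while $\texp=e^{\Theta(n)}$.)

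The main difficulty is the window near $\zeta$: there the drift of $(S_t)$, equivalently the geometric decay rate of $c_x$, degenerates, and one must extract the bound $O(\tfrac n\temp\log(\temp^2n))$ rather than merely, say, $O(n^{3/2}/\sqrt\temp)$. The weaker bound would not suffice, because when $\temp^2 n\to\infty$ slowly $\texp$ is only $\tfrac n\temp$ times a slowly growing function of $\temp^2 n$, so the excess over $\tfrac n\temp$ must be controlled as a function of $\temp^2n$ alone. (An alternative route — a supermartingale/drift argument descending from $1$ to $\zeta+\tfrac1{\sqrt{\temp n}}$ via dyadic checkpoints on $[\zeta,1]$, followed by a bounded random‑walk estimate on the last window — runs into the same obstacle at the same place.)
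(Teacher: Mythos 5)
Your argument is correct, but it is not the paper's route. You compute $\E_1\tau_\zeta$ via the exact birth-and-death downward hitting-time formula and then sum conductance ratios, reusing the machinery of Lemmas \ref{lem-order-conductance}--\ref{lem-order-resistance} (geometric decay of $c_x$ past $\zeta$ at rate $\sqrt{\temp/n}$, the comparability $c'_x\asymp c_{x-2/n}+c_x$ from the bounded holding probabilities, and the monotonicity of $g$ beyond $\zeta$); your three-range summation and the integral $\frac n2\int \frac{du}{\temp u+u^3}=\frac{n}{4\temp}\log\frac{u^2}{\temp+u^2}$ are sound, and the only soft spot -- ``$c_y$ is decreasing beyond $\zeta$'' -- fails only by per-step factors $1+O(1/n)$ within the $O(\sqrt{n/\temp})$-step window at $\zeta$, whose cumulative effect is $1+o(1)$, so your bounds $A_x=O(1/|g(x)|)$ and $A_x=O(\sqrt{n/\temp})$ stand. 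The paper instead runs a drift argument: Jensen plus concavity of $\tanh$ gives a recursion for $\E S_t$, a Taylor expansion of $\tanh$ around $\zeta$ (using $\zeta=\sqrt{3\temp}+O(\temp^{3/2})$) yields $\E[S_{t+1}-S_t]\leq-\frac{\sqrt\temp}{2n}(\E S_t-\zeta)^2$, dyadic levels $b_i=2^{-i}$ bring $\E S_t$ within $\sqrt\temp$ of $\zeta$ in $O(n/\temp)$ steps, and then the supermartingale estimate (Lemma \ref{lem-supermatingale-positive}) applied to $W_t=n(S_t-\zeta)\one_{\{\tau_\zeta>t\}}$ gives $\E_1\tau_\zeta=O(\temp n^2)$; the case of $\temp$ bounded below is dispatched by a random-walk comparison giving $O(n^2)$. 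Your approach buys a genuinely sharper bound, $O\big(\frac n\temp\log(\temp^2 n)\big)$ versus the paper's $O(\temp n^2)=\frac n\temp\cdot\temp^2 n$, but that precision is not needed here: since $\texp=\frac n\temp e^{(\frac34+o(1))\temp^2 n}$ and $x\,e^{-cx}\to0$, even the paper's cruder bound is $o(\texp)$, so the ``main difficulty'' you identify at the window near $\zeta$ can in fact be sidestepped (your observation that an $O(n^{3/2}/\sqrt\temp)$-type bound would \emph{not} suffice when $\temp^2 n$ grows slowly is correct, and is exactly why the excess over $\frac n\temp$ must be a function of $\temp^2 n$; both your logarithmic and the paper's polynomial excess qualify). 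The sharper, conductance-based estimate you prove is closer in spirit to what is needed for the censored-dynamics cutoff in the companion paper than to what this lemma requires.
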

\begin{proof}
First consider the case where $\temp$ is bounded below by some constant. Notice that, as $p_x \leq q_x$ for all $x\geq \zeta$, in this region $S_t$ is a supermartingale. Therefore, Lemma \ref{lem-supermatingale-positive} (or simply standard results on the simple random walk, which dominates our chain in this case) implies that $\E_1 \tau_\zeta = O (n^2)$. Combining this
with the fact that $\texp \geq \exp (c n )$ for some constant $c$ in this case, we immediately obtain that $\E_{1} \tau_{\zeta} =
o (\texp)$.

Next, assume that $\temp = o (1)$. Note that in this case, the Taylor expansion $\tanh(\beta x) = \beta x
- \frac{1}{3}(\beta x)^3 + O((\beta x)^5)$ implies that \begin{align}\label{eq-zeta-approx}\zeta =
\sqrt{3\temp/\beta^3 - O((\beta \zeta))^5} = \sqrt{3\temp} + O(\temp^{3/2})~.\end{align} Recalling that
$\E[S_{t+1} \mid S_t=s] \leq s + \frac{1}{n}(\tanh(\beta s)-s)$ (as $s \geq 0$), Jensen's inequality (using the
concavity of the Hyperbolic tangent) gives
\begin{align}\E[S_{t+1}-S_t] &= \E(\E[ S_{t+1}-S_t\mid S_t])
\leq \frac{1}{n}\left(\E \tanh(\beta S_t)- \E S_t\right)\nonumber\\
&\leq \frac{1}{n}\left(\tanh(\beta \E S_t) - \E S_t\right)~. \label{eq-Zt-from-St=1-bound}
\end{align}
Further note that the function $\tanh(\beta s)$ has the following Taylor expansion around $\zeta$ (for
some $\xi$ between $s$ and $\zeta$):
\begin{align}
\tanh(\beta s) &= \zeta + \beta(1-\zeta^2)(s-\zeta) + \beta^2(-1+\zeta^2) \zeta (s-\zeta)^2\nonumber\\
&+ \frac{\beta^3}{3} (-1+4\zeta^2-\zeta^4)(s-\zeta)^3 + \frac{\tanh^{(4)}(\xi)}{4!}(s-\zeta)^4
~.\label{eq-tanh-taylor-at-zeta}\end{align} Since $\tanh^{(4)}(x) < 5$ for any $x \geq 0$,
\eqref{eq-tanh-taylor-at-zeta} implies that for a sufficiently large $n$ the term $-\frac{1}{3}(s-\zeta)^3$
absorbs the last term in the expansion \eqref{eq-tanh-taylor-at-zeta}. Together with \eqref{eq-zeta-approx}, we
obtain that
\begin{align*}
\tanh(\beta s) &\leq \zeta + \beta(1-\zeta^2)(s-\zeta) + \beta^2(-1+\zeta^2) \sqrt{\temp}
(s-\zeta)^2~.\end{align*} Therefore, \eqref{eq-Zt-from-St=1-bound} follows:
\begin{align}\label{eq-st-drift-low-from-allplus}\E[S_{t+1}-S_t] &\leq- \frac{\sqrt{\temp}}{2n} (\E S_t - \zeta)^2 ~.
\end{align}
Set $$b_i = 2^{-i}~,i_2 = \min\{i:b_i < \sqrt{\temp}\} \mbox{ and }u_i = \min\{t: \E S_t - \zeta < b_i\}~,$$
noting that this gives $b_i/2 \leq \E S_t - \zeta \leq b_i$ for any $t \in [u_i,u_{i+1}]$. It follows that
\begin{align*}
  u_{i+1}-u_i &\leq \frac{b_i / 2}
  {\frac{\sqrt{\temp}}{2n}(\frac{b_i}{2})^2
  } = \frac{4 n}{\sqrt{\temp} b_i}~,
\end{align*}
and hence
\begin{align*}\sum_{i=1}^{i_2} u_{i+1}-u_i &\leq \sum_{i: b_i^2 > \temp} \frac{4 n}{\sqrt{\temp}b_i}=O(n/\temp)~,
\end{align*}
where we used the fact that the series $\{b_i^{-1}\}$ is geometric with ratio $2$. We claim that this implies the required bound on $\E_1 \tau_\zeta$. To see this, recall \eqref{eq-st-drift-low-from-allplus}, according to which $W_t \deq n (S_t-\zeta) \one_{\{ \tau_\zeta > t\}}$ is a supermartingale with bounded increments, whose variance is uniformly bounded from below on the event $\tau_\zeta >t$ (as the holding probabilities of $(S_t)$ are uniformly bounded from above, see \eqref{eq-holding-prob}). Moreover, the above argument gives $\E W_t \leq n\sqrt{\temp}$ for some $t = O(n/\temp)$. Thus, applying
Lemma \ref{lem-supermatingale-positive} and taking expectation, we deduce that $\E_1 \tau_\zeta = O(n/\temp + \temp n^2 ) = O(\temp n^2)$, which in turns gives $\E_1 \tau_\zeta = o (\texp)$.
\end{proof}

\begin{remark*}
With additional effort, we can  establish that $\E_{0} \tau_{\pm\zeta} = o (\texp)$ (for more details, see the
companion paper \cite{DLP-cens}), where $\tau_{\pm\zeta} = \min\{t: |S_t| \geq \zeta\}$. By combining this with
the of $S_t$ symmetry and applying the geometric trial method, we can obtain the expected commute time between $0$ and
$\zeta$: $$\E_\zeta \tau_0 = (\mbox{$\frac{1}{2}$} + o(1)) \E C_{0, \zeta} = \Theta (\texp)~,$$ and therefore
conclude that $\E_{1} \tau_0 = \Theta(\texp)$.
\end{remark*}
%Now, we are ready to establish the order of the hitting time $\tau_0$.
%\begin{lemma}\label{lem-super-tau-0}
%For the high temperature regime, the hitting time $\tau_0$ of the magnetization chain satisfies $\E_1 \tau_0 =
%\Theta (\texp)$.
%\end{lemma}
%\begin{proof}
%First, let's consider $\E_0 \tau_\zeta$. By Lemma \ref{lem-tau-pm-zeta}, we know that the expected hitting time
%to boundaries $\pm \zeta$ is of order $o(\texp)$. By symmetry, with half probability, the magnetization chain
%will hit $\zeta$ at time $\tau_{\pm\zeta}$. On the other half probability of hitting $-\zeta$, we need a trip
%from $-\zeta$ to $0$ and start over. Figuring out this is a geometric trial, we can conclude that $\E_0
%\tau_\zeta = (1+ o(1)) \E_\zeta \tau_0$. Combining this with Lemma \ref{lem-commute-time}, we obtain that
%$\E_\zeta \tau_0 = \Theta(\texp)$. Now, by Lemma \ref{lem-tau-zeta-from-1}, we obtain that $\E_1 \tau_0 = \Theta
%(\texp)$.
%\end{proof}

\subsubsection{Upper bound for mixing}
Combining Lemma \ref{lem-tau-zeta-from-1} and \eqref{eq-hitting-zeta--zeta}, we conclude that $\E_1
\tau_{-\zeta} = \Theta (\texp)$ and hence $\E_1 \tau_0 = O (\texp)$.  Together with Lemma
\ref{lem-taumag-tau-0}, this implies that the magnetization chain will coalescence in $O(\texp)$ steps with
probability arbitrarily close to 1. At this point, Lemma \ref{lem-fullmix-taumag} immediately gives
that the Glauber dynamics achieves full mixing within $O(n\log n)$ additional steps. The following simple
lemma thus completes the proof of the upper bound for the mixing time.
\begin{lemma}
Let $\texp$ be as defined in Lemma \ref{lem-commute-time}. Then $n \log n = o (\texp)$.
\end{lemma}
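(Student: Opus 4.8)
The plan is to read off the conclusion from the closed form for $\texp$ in Lemma~\ref{lem-commute-time}, handled separately in the two relevant regimes. Since it is enough to rule out $\liminf_{n}\texp/(n\log n)<\infty$, I may pass to a subsequence and assume that either $\temp\to 0$ or $\temp$ is bounded below by a positive constant. Throughout I will use that the exponent in $\texp$ is strictly positive: on $(0,\zeta)$ one has $\tanh(\beta x)>x$, and the identity $1-g(x)=(1+x)\bigl(1-\tanh(\beta x)\bigr)/\bigl(1-x\tanh(\beta x)\bigr)$ shows $0<g(x)<1$ there, so the integrand $\log\tfrac{1+g(x)}{1-g(x)}$ is positive on $(0,\zeta)$ and in particular $\texp\ge n/\temp$.

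First I will treat the regime where $\temp$ is bounded below by a constant $c_0>0$. Then the positive root $\zeta$ is bounded below by a positive constant, and restricting the integral to a fixed small interval $[a,2a]\subset(0,\zeta)$ on which $g$ is bounded below (uniformly over $\temp\ge c_0$, using that $g$ is increasing in $\beta$ together with the standard estimates on $\tanh$ used elsewhere in the paper) yields $\frac n2\int_0^\zeta\log\tfrac{1+g(x)}{1-g(x)}\,dx\ge c_1\,\temp\, n$ for some $c_1=c_1(c_0)>0$ (the lower bound in fact grows linearly in $\temp$ as $\temp\to\infty$, which is all that is needed when $\temp$ is unbounded). Hence $\texp\ge (n/\temp)\,e^{c_1\temp n}$, which is super-polynomial in $n$, so $n\log n=o(\texp)$ trivially in this case.

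The substantive case is $\temp=o(1)$, where Lemma~\ref{lem-commute-time} gives $\texp=(n/\temp)\exp\bigl((\tfrac34+o(1))\temp^2 n\bigr)$. Writing $M=M_n=\temp^2 n$, so that $\temp=\sqrt{M/n}$ and $M\to\infty$ by hypothesis,
\[
\frac{\texp}{n\log n}=\frac{\exp\bigl((\tfrac34+o(1))M\bigr)}{\temp\log n}=\frac{\exp\bigl((\tfrac34+o(1))M\bigr)}{\sqrt M}\cdot\frac{\sqrt n}{\log n}.
\]
For large $n$ the exponent is at least $M/2$, so the first factor is at least $e^{M/2}/\sqrt M\to\infty$, while the second factor $\sqrt n/\log n\to\infty$; the product therefore diverges, i.e.\ $n\log n=o(\texp)$. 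This is precisely where the hypothesis $\temp^2 n\to\infty$ is used. There is no genuine obstacle here: the one point worth isolating is the algebraic rewriting of $\temp$ in terms of $M=\temp^2 n$, which exhibits the ratio as a product of two manifestly divergent quantities.
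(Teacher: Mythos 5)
Your proof is correct and follows essentially the same route as the paper: read the bound off the explicit formula for $\texp$, splitting into the cases $\temp$ bounded below and $\temp = o(1)$. The only differences are cosmetic — where the paper further splits the $\temp=o(1)$ case at $\temp \asymp n^{-1/3}$, your substitution $M=\temp^2 n$ handles it in a single computation, and in the $\temp\ge c_0$ case the paper simply asserts $\texp \ge n\exp(c'n)$, which is what your lower bound on the exponent establishes.
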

\begin{proof}
In case $\temp \geq c > 0$ for some constant $c$, we have $\texp \geq n \exp(c' n)$ for some constant $c'>0$ and hence $n \log n = o
(\texp)$. It remains to treat the case $\temp = o(1)$.

Suppose first that $\temp=o(1)$ and $\temp \geq c n^{-1/3}$ for some constant $c>0$. In this case, we have $\texp = \frac{n}{\temp} \exp \left((\frac{3}{4} + o(1))\temp^2 n\right)$ and thus $n \exp (\frac{1}{2} n^{1/3}) = O(\texp)$, giving  $n \log n = o (\texp)$. Finally, if $\temp = o (n^{-1/3})$, we can simply conclude that $n^{4/3} = O(\texp)$ and hence $n \log n = o (\texp)$.
\end{proof}
\subsubsection{Lower bound for mixing}
The lower bound will follow from showing that the probability of hitting $-\zeta$ within $\epsilon \texp$ steps is small, for some small $\epsilon>0$ to be chosen later. To this end, we need the following simple lemma:
\begin{lemma}\label{lem-reverse-markov}
Let $X$ denote a Markov chain over some finite state space $\Omega$, $y\in\Omega$ denote a target state, and $T$
be an integer. Further let $x\in\Omega$ denote the state with the smallest probability of hitting $y$ after at
most $T$ steps, i.e., $x$ minimizes $\P_x(\tau_y \leq T)$. The following holds:
$$\P_x(\tau_y \leq T) \leq \frac{T}{\E_x \tau_y}~.$$
\end{lemma}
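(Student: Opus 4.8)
The plan is to exploit the fact that $x$ is the \emph{worst} starting state for hitting $y$ quickly, and to run the chain for a long time so that a renewal-type argument forces many successful hitting attempts. Concretely, fix the horizon $T$ and consider starting the chain at $x$ and running it for $NT$ steps, where $N$ is a large integer. Break the time interval $[0,NT]$ into $N$ consecutive blocks of length $T$. In each block, starting from whatever state the chain currently occupies, the probability of hitting $y$ during that block is at least $\P_x(\tau_y \le T)$, by the defining (minimizing) property of $x$ — indeed $x$ minimizes this quantity over all starting states, so any state the chain visits at a block boundary does at least as well. Hence the number of blocks in which $y$ is hit stochastically dominates a $\mathrm{Binomial}(N, \P_x(\tau_y \le T))$ random variable, and in particular $\P_x(\tau_y > NT) \le (1 - \P_x(\tau_y \le T))^N$.

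The second ingredient is a lower bound on $\E_x \tau_y$ in terms of $T$ and $\P_x(\tau_y \le T)$, obtained from the same block decomposition. Let $p \deq \P_x(\tau_y \le T)$. If $p = 0$ the claimed inequality is vacuous (the right-hand side is infinite), so assume $p>0$. Writing $\tau_y$ in units of blocks, the above domination shows that the number of blocks needed to hit $y$ is stochastically dominated below is awkward to phrase directly; instead I would bound $\E_x \tau_y$ from below by conditioning on the block in which the hit first occurs. Since each block succeeds with probability at least $p$ regardless of history, the index $K$ of the first successful block is stochastically dominated by a Geometric$(p)$ variable only from above, which is the wrong direction. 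The cleaner route: simply observe $\tau_y \ge T \cdot \one\{\text{block }1\text{ fails}\}\cdot(\text{something})$ is too lossy.

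The genuinely simple argument is the following. For every integer $k \ge 0$, the Markov property together with the worst-case property of $x$ gives
\begin{equation*}
\P_x(\tau_y > (k+1)T) \le \P_x(\tau_y > kT)\cdot (1-p),
\end{equation*}
because on the event $\{\tau_y > kT\}$ the chain sits at some state at time $kT$ from which the probability of hitting $y$ within the next $T$ steps is at least $p$. Iterating, $\P_x(\tau_y > kT) \le (1-p)^k$, hence
\begin{equation*}
\E_x \tau_y \le T \sum_{k=0}^{\infty} \P_x(\tau_y > kT) \le T \sum_{k=0}^\infty (1-p)^k = \frac{T}{p},
\end{equation*}
which is exactly $\P_x(\tau_y \le T) \ge T/\E_x\tau_y$, i.e. $\P_x(\tau_y \le T) \le T/\E_x\tau_y$ once rearranged — wait, this gives the reverse. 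Let me restate: we have derived $\E_x\tau_y \le T/p$, equivalently $p \ge T/\E_x\tau_y$, which is the \emph{opposite} of the claim.

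The resolution is that the lemma as stated must use that $x$ \emph{minimizes} $\P_\cdot(\tau_y \le T)$, so on $\{\tau_y > kT\}$ the chain is at a state whose one-block hitting probability is \emph{at least} $p$ — giving the bound above, which shows $p \ge T/\E_x\tau_y$ is false in general; rather the correct reading is that we want an \emph{upper} bound on $p$, so we should instead use that for the \emph{worst} state the single-block attempt is the \emph{least} favorable, and bound $\E_x\tau_y$ from \emph{below}: on $\{\tau_y > kT\}$, the probability of hitting in block $k+1$ is at most... no, it is still at least $p$. So the only inequality available is $\E_x\tau_y \le T/p$. Therefore I would present the proof as: the chosen $x$ minimizes $\P_\cdot(\tau_y\le T)$, so $\P_x(\tau_y > kT) \le (1-p)^k$ by the iteration above, whence $\E_x\tau_y = \sum_{m\ge 0}\P_x(\tau_y > m) \le T\sum_{k\ge 0}\P_x(\tau_y>kT) \le T/p$, and rearranging yields $\P_x(\tau_y\le T) = p \le T/\E_x\tau_y$ — the main obstacle, which I flag here, is getting the direction of this final rearrangement consistent with the statement, and the fix is simply that $\E_x\tau_y \le T/p$ is equivalent to $p \le T/\E_x\tau_y$ only when one is careful, so I would double-check by noting $\E_x\tau_y\le T/p \iff p\,\E_x\tau_y \le T \iff p \le T/\E_x\tau_y$, which is indeed the claim.
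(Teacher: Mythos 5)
Your final argument is correct and is essentially the paper's proof: since $x$ minimizes $\P_\cdot(\tau_y\le T)$, every length-$T$ block succeeds with probability at least $p=\P_x(\tau_y\le T)$, so $\tau_y$ is stochastically dominated by $T$ times a Geometric$(p)$ variable (your iteration $\P_x(\tau_y>kT)\le(1-p)^k$ plus the tail sum is exactly this), giving $\E_x\tau_y\le T/p$, i.e.\ $p\le T/\E_x\tau_y$. The mid-proof worry about the direction of the rearrangement was a false alarm, and your final check resolves it correctly.
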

\begin{proof}
Set $p=\P_x(\tau_y \leq T)$. By definition, $\P_z(\tau_y \leq T) \geq p$ for all $z\in\Omega$, hence the hitting
time from $x$ to $y$ is stochastically dominated by a geometric random variable with success probability $p$,
multiplied by $T$. That is, we have $\E_x \tau_y \leq T / p$, completing the proof.
\end{proof}

The final fact we would require is that the stationary probability of $\magspace[-1, -\zeta]$ is strictly positive.
This is stated by the following lemma.
\begin{lemma}\label{lem-stationary-zeta-1}
There exists some absolute constant $0 < C_\pi < 1$ such that
$$C_\pi \leq \pi (\magspace[\zeta, 1]) ~(\,=\pi (\magspace[-1, -\zeta])\,)~.$$
\end{lemma}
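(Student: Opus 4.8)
The plan is to read $\pi$ off from the electrical-network description of the birth-and-death chain $(S_t)$, and to show that a constant fraction of its mass sits in the $\sqrt{1/(\temp n)}$-neighbourhood of $\zeta$ already analysed in the proof of Lemma \ref{lem-order-conductance}. The bound $C_\pi<1$ is immediate: since $\zeta>0$ the intervals $\magspace[\zeta,1]$ and $\magspace[-1,-\zeta]$ are disjoint, and by the symmetry $\pi(x)=\pi(-x)$ they carry equal mass, so each has mass at most $\tfrac12<1$. The content is therefore the lower bound.

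For a reversible chain realised as a weighted network one has $\pi(x)\propto c_x+c_{x-2/n}+c'_x$, the total conductance incident to $x$ (including its self-loop), with normalising constant of order $\tilde c_S=\sum_{x\in\magspace}c_x$; here we use, as in the proof of Lemma \ref{lem-order-conductance}, that the holding probabilities $h_x$ are bounded away from $0$ and $1$ (recall \eqref{eq-holding-prob}), so that $c'_x$ is comparable to $c_{x-2/n}+c_x$. Now restrict attention to $I\deq\magspace\bigl[\zeta,\,\zeta+\sqrt{1/(\temp n)}\,\bigr]$, which is contained in $\magspace[\zeta,1]$ and is a sub-interval of the window $\magspace[\xi_2,\xi_3]$ of \eqref{eq-K1-K2-def}. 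On that window the edge conductances were shown, in the last paragraph of the proof of Lemma \ref{lem-order-conductance}, all to be of order $c_\zeta$; hence $c_x+c_{x-2/n}+c'_x=\Theta(c_\zeta)$ for every $x\in I$. Since the lattice spacing is $2/n$ and $\zeta+\sqrt{1/(\temp n)}\le1$ for large $n$ (as $\zeta<1$ and $\temp^2n\to\infty$), the interval $I$ has $\Theta(\sqrt{n/\temp})$ states, so that $\sum_{x\in I}(c_x+c_{x-2/n}+c'_x)=\Theta(\sqrt{n/\temp}\,c_\zeta)$. Combining this with the estimate $\tilde c_S=\Theta(\sqrt{n/\temp}\,c_\zeta)$ from Lemma \ref{lem-order-conductance} (see \eqref{eq-cS-order}), we conclude that $\pi(\magspace[\zeta,1])\ge\pi(I)=\Theta(1)$, which yields an absolute constant $C_\pi>0$, uniform over all $\temp$ with $\temp^2n\to\infty$ (the case of $\temp$ bounded below being included automatically).

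Essentially all of this is routine once Lemma \ref{lem-order-conductance} is in hand; in fact the key estimate is a re-reading of \eqref{eq-conductance-K2-K3-bound} — which already says that a constant fraction of the total conductance lies in the window around $\zeta$ — through the identity $\pi(x)\propto c_x+c_{x-2/n}+c'_x$. The only points requiring a little care are the identification of $\pi$ with the incident conductance and the comparability of $c'_x$ and $c_{x-2/n}$ with $c_x$ on $I$, together with the bookkeeping that $I$ contains $\Theta(\sqrt{n/\temp})$ lattice points and sits inside $\magspace[\zeta,1]$; this is the closest thing to an obstacle, and it is minor.
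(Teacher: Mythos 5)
Your proposal is correct and follows essentially the same route as the paper: it lower-bounds the stationary mass of $\magspace[\zeta,1]$ by the conductance carried by the $\sqrt{1/(\temp n)}$-window just above $\zeta$ (a restatement of the derivation of \eqref{eq-conductance-K2-K3-bound}), and compares this with the total conductance estimate \eqref{eq-cS-order}. Your added remarks (the identification $\pi(x)\propto c_{x-2/n}+c_x+c'_x$, the comparability of $c'_x$ with the incident edge conductances, and the trivial upper bound $C_\pi<1$ by symmetry) are accurate and only make explicit what the paper leaves implicit.
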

\begin{proof}
 Repeating the derivation of \eqref{eq-conductance-K2-K3-bound}, we can easily get
$$c_{\magspace[\zeta,1]}\deq \sum_{x\in \magspace[\zeta,1]} (c_x + c'_x) \geq \Theta \left(\sqrt{\frac{n}{\temp}}\exp\left(\frac{n}{2}\int_0^\zeta \log \frac{1+g(x)}{1-g(x)}\right)dx\right) ~.$$
Combining the above bound with \eqref{eq-cS-order}, we conclude that there exists some $C_\pi>0$, such that $\pi
(\magspace[\zeta,1]) \geq C_\pi$.
\end{proof}

Plugging in the target state $-\zeta$ into Lemma \ref{lem-reverse-markov}, and recalling that the
monotone-coupling implies that, for any $T$, the initial state $s_0=1$ has the smallest probability (among all
initial states) of hitting $-\zeta$ within $T$ steps, we deduce that, for a sufficiently small $\epsilon
> 0$,
$$ \P_1( \tau_{-\zeta} \leq \epsilon \texp) \leq \frac{1}{2} C_\pi~.$$
This implies that $$\texp = O\Big(\tmix\Big(\frac{1}{2} C_\pi\Big)\Big)~,$$ which in turn gives $$\texp = O\left(\tmix\big(\mbox{$\frac{1}{4}$}\big)\right)~.$$

\subsection{Spectral gap analysis}

The lower bound is straightforward (as the relaxation time is always at most the mixing time) and we turn to prove the upper bound. Note that, by Lemma
\ref{lem-stationary-zeta-1}, we have $\pi (\magspace[\zeta, 1]) \geq C_\pi >0$. Suppose first that $\gap \cdot
\tmix(\frac{1}{4}) \to \infty$. In this case, one can apply Lemma \ref{lem-hitting-ratio} onto the birth-and-death chain $(S_t)$, with a choice of $\alpha= \pi (\magspace[\zeta,1])$ and $\beta= 1-\pi (\magspace[\zeta,1])$ (recall that $\tmix
(\frac{1}{4})= \Theta ( \E_1 \tau_{-\zeta})$). It follows that
$$\E _{\zeta }\tau_{-\zeta} = o \left(\E_1 \tau_{-\zeta}\right)~.$$
However, as both quantities above should have the same order as $\tmix(\frac{1}{4})$, this leads to a
contradiction. We therefore have $\gap \cdot \tmix (\frac{1}{4})= O (1)$, completing the proof of the upper bound.

\begin{bibdiv}
\begin{biblist}
\bib{AF}{book}{
    AUTHOR = {Aldous, David},
    AUTHOR = {Fill, James Allen},
    TITLE =  {Reversible {M}arkov Chains and Random Walks on Graphs},
    note = {In preparation, \texttt{http://www.stat.berkeley.edu/\~{}aldous/RWG/book.html}},
}

\bib{AH}{article}{
   author={Aizenman, M.},
   author={Holley, R.},
   title={Rapid convergence to equilibrium of stochastic Ising models in the
   Dobrushin Shlosman regime},
   conference={
      title={},
      address={Minneapolis, Minn.},
      date={1984--1985},
   },
   book={
      series={IMA Vol. Math. Appl.},
      volume={8},
      publisher={Springer},
      place={New York},
   },
   date={1987},
   pages={1--11},
}

%
%\bib{BLPW}{article}{
%   author={Bender, Edward A.},
%   author={Lawler, Gregory F.},
%   author={Pemantle, Robin},
%   author={Wilf, Herbert S.},
%   title={Irreducible compositions and the first return to the origin of a
%   random walk},
%   journal={S\'em. Lothar. Combin.},
%   volume={50},
%   date={2003/04},
%   pages={Art. B50h, 13 pp. (electronic)},
%   eprint = {arxiv:math/0404253v1},
%}
%
\bib{BD}{inproceedings}{
  author           = {Bubley, R.},
  author           = {Dyer, M.},
  title            = {Path coupling: A technique for proving rapid mixing
                      in Markov chains},
  booktitle        = {Proceedings of the 38th Annual Symposium on
                      Foundations of Computer Science (FOCS)},
  publisher        = {I.E.E.E.},
  address          = {Miami, FL},
  pages            = {223--231},
  year             = {1997},
}

\bib{Chen}{article}{
author= {Chen, M.-F.},
 year={1998},
 title= { Trilogy of couplings and general formulas for lower bound of spectral gap},
   conference={
   title={Probability towards 2000},
   address={New York},
      date={1995},
   },
book={
     series={Lecture Notes in Statist.},
     volume={128},
     publisher={Springer},
     place={New York},
   },
   pages= {123--136 $\uparrow$157,230,231},
}

%%% \bib{chat}{thesis}{
%%%   author = {Chatterjee, S.},
%%%   title  = {Concentration inequalities with exchangeable pairs},
%%%   type   = {Ph.D. Thesis},
%%%   institution = {Stanford University},
%%%   year = {2005},
%%%   eprint = {arXiv:math/0507526v1},
%%% }
%
%\bib{D:COP}{article}{
%   author={Diaconis, Persi},
%   title={The cutoff phenomenon in finite Markov chains},
%   journal={Proc. Nat. Acad. Sci. U.S.A.},
%   volume={93},
%   date={1996},
%   number={4},
%   pages={1659--1664},
%}
%
%\bib{DSC}{article}{
%   author={Diaconis, Persi},
%   author={Saloff-Coste, Laurent},
%   title={Separation cut-offs for birth and death chains},
%   journal={Ann. Appl. Probab.},
%   volume={16},
%   date={2006},
%   number={4},
%   pages={2098--2122},
%}
%
\bib{DLP}{article}{
 AUTHOR = {Ding, Jian},
 AUTHOR = {Lubetzky, Eyal},
 AUTHOR = {Peres, Yuval},
 TITLE = {Total-variation cutoff in birth-and-death chains},
 status = {preprint},
}

\bib{DLP-cens}{article}{
 AUTHOR = {Ding, Jian},
 AUTHOR = {Lubetzky, Eyal},
 AUTHOR = {Peres, Yuval},
 TITLE = {Censored Glauber dynamics for the mean-field Ising Model},
 status = {preprint},
}

\bib{Ellis}{book}{
   author={Ellis, Richard S.},
   title={Entropy, large deviations, and statistical mechanics},
   series={Grundlehren der Mathematischen Wissenschaften},
   volume={271},
   publisher={Springer-Verlag},
   place={New York},
   date={1985},
}
\bib{EN}{article}{
 AUTHOR = {Ellis, Richard S.},
 AUTHOR = {Newman, Charles M.},
 TITLE = {Limit theorems for sums of dependent random variables
          occurring in statistical mechanics},
   JOURNAL = {Z. Wahrsch. Verw. Gebiete},
%  FJOURNAL = {Probability Theory and Related Fields},
    VOLUME = {44},
      YEAR = {1978},
    NUMBER = {2},
     PAGES = {117--139},
}

\bib{ENR}{article}{
   author={Ellis, Richard S.},
   author={Newman, Charles M.},
   author={Rosen, Jay S.},
   title={Limit theorems for sums of dependent random variables occurring in
   statistical mechanics. II. Conditioning, multiple phases, and
   metastability},
   journal={Z. Wahrsch. Verw. Gebiete},
   volume={51},
   date={1980},
   number={2},
   pages={153--169},
}
%
%%% \bib{ER60}{article}{
%%%    author={Erd{\H{o}}s, P.},
%%%    author={R{\'e}nyi, A.},
%%%    title={On the evolution of random graphs},
%%%    language={English, with Russian summary},
%%%    journal={Magyar Tud. Akad. Mat. Kutat\'o Int. K\"ozl.},
%%%    volume={5},
%%%    date={1960},
%%%    pages={17--61},
%%% }
%
%
%\bib{F:v2}{book}{
%  author = {Feller, W.},
%  title  = {An Introduction to Probability Theory and its
%  Applications},
%  volume = {2},
%  date   = {1971},
%  edition = {second edition},
%  publisher = {J. Wiley \& Sons},
%  place = {New York},
%}
%
\bib{GWL}{article}{
  journal = {Phys. Rev.},
  volume  = {149},
  pages   = {301 \ndash 305},
  year    = {1966},
  title   = {Relaxation Times for Metastable States
             in the Mean-Field Model of a Ferromagnet},
  author  = {Griffiths, Robert B.},
  author  = {Weng, Chi-Yuan},
  author  = {Langer, James S.},
}
\bib{LLP}{article}{
  journal = {},
  volume  = {},
  pages   = {},
  year    = {},
  title   = {Glauber dynamics for the Mean-field Ising Model: cut-off, critical power law, and metastability},
  author  = {Levin, David A.},
  author = {Luczak, Malwina},
  author = {Peres, Yuval},
  status = {preprint},
}
\bib{LPW}{book}{
    author = {Levin, D.},
    author = {Peres, Y.},
    author = {Wilmer, E.},
    title =  {Markov Chains and Mixing Times},
    year  =  {2007},
    note = {In preparation, available at
    \texttt{http://www.uoregon.edu/\~{}dlevin/MARKOV/}},
}
\bib{Nacu}{article}{
  journal = {Probability Theory and Related Fields},
  volume  = {127},
  pages   = {177-185},
  year    = {2003},
  title   = {Glauber Dynamics on the Cycle is Monotone},
  author  = {Nacu, \c{S}.},
}
%
%\bib{L:CM}{book}{
%   author={Lindvall, Torgny},
%   title={Lectures on the coupling method},
%   note={Corrected reprint of the 1992 original},
%   publisher={Dover Publications Inc.},
%   place={Mineola, NY},
%   date={2002},
%   pages={xiv+257},
%}
%
%%% \bib{n:mc}{book}{
%%%    author={Norris, J. R.},
%%%    title={Markov chains},
%%%    series={Cambridge Series in Statistical and Probabilistic Mathematics},
%%%    volume={2},
%%%    note={Reprint of 1997 original},
%%%    publisher={Cambridge University Press},
%%%    place={Cambridge},
%%%    date={1998},
%%%    pages={xvi+237},
%%% }
%
%\bib{sg}{article}{
%   author={Simon, Barry},
%   author={Griffiths, Robert B.},
%   title={The $(\phi \sp{4})\sb{2}$ field theory as a classical Ising model},
%   journal={Comm. Math. Phys.},
%   volume={33},
%   date={1973},
%   pages={145--164},
%}

%% \bib{S:RG}{book}{
%%    author={Sinclair, Alistair},
%%    title={Algorithms for random generation and counting},
%%    series={Progress in Theoretical Computer Science},
%%    note={A Markov chain approach},
%%    publisher={Birkh\"auser Boston Inc.},
%%    place={Boston, MA},
%%    date={1993},
%%    pages={vi+146},
%% }
\end{biblist}
\end{bibdiv}

\end{document}